\DeclareMathOperator{\bS}{\mathbb{S}}
\DeclareMathOperator{\ma}{\mathfrak{a}}
\DeclareMathOperator{\mpp}{\mathfrak{p}}
\DeclareMathOperator{\ttg}{\mathtt{g}}
\DeclareMathOperator{\sign}{sign}
\newtheorem{theorem}{Theorem}[section]
\newtheorem{lemma}[theorem]{Lemma}
\newtheorem{corollary}[theorem]{Corollary}
\newtheorem{proposition}[theorem]{Proposition}
\newtheorem{remark}[theorem]{Remark}
\theoremstyle{definition}
\newtheorem{definition}[theorem]{Definition}
\newtheorem{question}[theorem]{Question}
\newtheorem{example}[theorem]{Example}
\title{Poincar\'e-Einstein 4-manifolds with conformally K\"ahler geometry}
\author{Mingyang Li and Hongyi Liu}
\date{}
\newcommand{\Addresses}{{
  \bigskip
  \footnotesize

  \textsc{{Simons Center for Geometry and Physics, Stony Brook University, Stony Brook, NY 11794, USA}}\par\nopagebreak
  \textit{E-mail address}: \href{mailto:mingyang.li@scgp.stonybrook.edu}{\texttt{mingyang.li@scgp.stonybrook.edu}}

  \

  \textsc{{Department of Mathematics, Princeton University, Princeton,
    NJ 08544, USA.}}\par\nopagebreak
  \textit{E-mail address}: \href{mailto:hongyil@princeton.edu}{\texttt{hongyil@princeton.edu}}
}}
\begin{document}

\begin{abstract}
We study 4-dimensional Poincar\'e-Einstein manifolds whose conformal class contains a K\"ahler metric. Such Einstein metrics are non-K\"ahler and admit a Killing field extending to the conformal infinity, and the Einstein equation reduces to a Toda-type equation. When the Killing field integrates to an $\mathbb{S}^1$-action, we formulate a Dirichlet boundary value problem and establish existence and uniqueness theory. This construction provides a non-perturbative realization of infinite-dimensional families of new Poincar\'e-Einstein metrics whose conformal infinities are of non-positive Yamabe type.
\end{abstract}

\maketitle


\section{Introduction}

In this article, we study Poincar\'e--Einstein (PE) 4-manifolds that are conformally K\"ahler, i.e., whose conformal class contains a K\"ahler metric. Poincar\'e-Einstein manifolds, also known as conformally compact Einstein (CCE) or asymptotically hyperbolic Einstein (AHE) manifolds, have been extensively investigated due to their notable connections to the AdS/CFT correspondence in physics.

\begin{definition}\label{def:basic} 
    A \emph{Poincaré–Einstein} 4-manifold is an Einstein 4-manifold \((M^4, h)\) with $\operatorname{Ric}=-3h$, such that \(h\) is \emph{conformally compact} in the sense that \(M\) is the interior of a compact manifold \(\overline{M}\) with boundary \(\partial M\), and there exists a smooth defining function \(\rho\) on \(\overline{M}\), where \(\rho > 0\) on \(M\), \(\rho = 0\) on \(\partial M\), and \(d\rho\neq 0 \) on \(\partial M\), such that the conformally changed metric
        \[
            g = \rho^2 h
        \]
        extends smoothly to \(\overline{M}\).
We say $(\overline{M},g)$ is a \emph{conformal compactification}, and the conformal class $[g|_{\partial M}]$ is the \emph{conformal infinity}. 
\end{definition}

The definition implies that $h$ is complete, with $|d\rho|_{g}^{2}=1$ on $\partial M$,  and that sectional curvatures of $h$ converge uniformly to $-1$ near infinity,  by the conformal change formulas for scalar and Weyl curvatures.
The study of such metrics goes back to the foundational works of Fefferman--Graham \cite{FeffermanGraham1985,fefferman_graham_2012} on conformal invariants and has since been advanced by Graham--Lee, Anderson, Biquard, et al.\ \cite{graham_lee_1991,Anderson2001,Biquard2000}. Their dependence on the boundary conformal structure makes Poincar\'e--Einstein manifolds a natural setting for geometric boundary value problems. A fundamental example of a Poincar\'e--Einstein metric is the standard hyperbolic metric on the 4-ball $B^4$, 
$$
h=\frac{4}{(1-|x|^2)^2}(dx_1^2+\cdots+dx_{4}^2), \qquad 
\rho=\tfrac{1-|x|^2}{2}.
$$
Other important examples include the AdS–Schwarzschild, AdS–Kerr, Page–Pope \cite{PagePope1987}, Pedersen \cite{Pedersen1986}, Calderbank–Singer \cite{CalderbankSinger2004}, and Plebański–Demiański \cite{PlebanskiDemianski1976} families (further studied by Alvarado–Ozuch–Santiago \cite{Ozuch2023}), all of which are conformally Kähler, as well as the anti-self-dual examples of Hitchin \cite{Hitchin1995Twistor}. More recently, Wang \cite{wang2025computerassistedconstructionsu2invariantnegative} constructed new examples that are not self-dual and cohomogeneity-one.

A central theme in the study of Poincar\'e--Einstein manifolds is the \textbf{fill-in problem}:

\begin{question}\label{ques: fundamental CCE}
Given a conformal class $[\hat g]$ on $\partial M$, does there exist a Poincar\'e-Einstein metric $h$ on a 4-manifold $M$ whose conformal infinity is $[\hat{g}]$?
\end{question}

It was shown by Fefferman-Graham that if $(M^4,h)$ is a Poincar\'e--Einstein manifold with conformal infinity $[\hat g]$, then for any choice of representative metric $\hat g\in[\hat g]$ there exists a \emph{geodesic defining function} $r$ such that the compactified metric $g'=r^2h$ satisfies
$|\nabla_{g'} r|_{g'} = 1  $ near $\partial M$ and $g'|_{\partial M}=\hat g$.
With respect to this defining function, the compactified metric $g' = r^2 h$ admits a formal expansion
\begin{equation*}
g' = dr^2 + \hat g + r^2 g_{(2)} + r^3 g_{(3)} + \cdots,
\end{equation*}
where the coefficient $g_{(2)}$ is locally determined by the curvature of $\hat g$, while $g_{(3)}$ is a non-local term, depending globally on $h$ and $\hat{g}$. We call such $(\overline{M},g')$ a \emph{geodesic compactification}. Biquard \cite{biquard2008continuationunique} proved that $h$ is determined up to local isometry by the pair $(\hat g,g_{(3)})$.  

For Question~\ref{ques: fundamental CCE}, there are local existence results, valid only near infinity. LeBrun~\cite{LeBrun1985} and Fefferman–Graham~\cite{fefferman_graham_2012} established such results for real-analytic conformal boundaries, later extended to smooth boundaries by Gursky–Székelyhidi~\cite{gursky_szekelyhidi_2020}.

By contrast, the \emph{global} fill-in problem is far more subtle, since one must decide which local solutions defined in a collar neighborhood extend smoothly to the whole manifold. The perturbative approach was initiated by Graham–Lee \cite{graham_lee_1991} and later extended in greater generality by Lee \cite{lee_2006}. Anderson \cite{Anderson2003_AH_Einstein} revealed non-uniqueness and degeneration phenomena, such as cusp formation. Through twistor theory, Biquard \cite{biquard2002metriques} classified all local deformations of self-dual Poincaré–Einstein metrics near the hyperbolic 4–ball, thereby confirming LeBrun’s positive frequency conjecture \cite{lebrun1991complete}. More recently, Chang–Ge, Chang–Ge–Qing, and Chang–Ge–Jin–Qing \cite{chang_ge_2018,chang_ge_qing_2020,chang_ge_jin_2024} established compactness results and perturbative existence theorems. In their latest work, Chang–Ge \cite{chang2025problemfillingpoincareeinsteinmetric} proved that if a metric $\hat g$ lies within a definite-size neighborhood of either the round metric on $\mathbb{S}^3$ or the product metric on $\mathbb{S}^1 \times \mathbb{S}^2$, then $\hat g$ admits a Poincar\'e–Einstein fill-in with underlying manifold given by $B^4$ or $S^1 \times B^3$, respectively.

However, little existence theory is known at large scales. The main goal of this article is to construct Poincaré–Einstein metrics whose conformal infinities are far from known examples. The main result of this paper is
\begin{theorem}\label{thm:main conclusion}
Given a closed, oriented, smooth surface $(\Sigma_{\ttg},g^\natural)$ of genus $\ttg \geq 1$, area $\ma$, and Euler characteristic $\chi = 2-2\ttg$, suppose $k \in \mathbb{R}\cup\{\pm\infty\}$ and $deg \in \mathbb{Z}$ are such that the tuple $(deg,\chi,k,\ma,\mpp)$ is admissible.  
Then there exists a Poincar\'e--Einstein $(M,h)$ with conformal infinity $[g^\flat]$ implicitly determined by $g^\natural$ and the tuple, where 
\begin{itemize}
    \item $M$ is diffeomorphic to the complex line bundle of degree $deg$ over $\Sigma_{\ttg}$;
    \item $(\partial M,g^\flat)$ carries a free isometric $\mathbb{S}^1$-action with quotient $(\Sigma_{\ttg},g^\natural)$ and constant orbit length $\mpp$.
\end{itemize}
\end{theorem}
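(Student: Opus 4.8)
The strategy exploits the rigidity imposed by the conformally K\"ahler hypothesis. By Derdzi\'nski's theorem, a non-K\"ahler Einstein $4$-metric in the conformal class of a K\"ahler metric is conformal — with an explicit conformal factor built from the scalar curvature $s$ — to an \emph{extremal} K\"ahler metric $g_K$, and $J\,\mathrm{grad}\,s$ is a Killing field for both $g_K$ and $h$. Assuming, as in the theorem, that this Killing field integrates to an $\mathbb{S}^1$-action (free off the zero section, which it fixes), I would place $g_K$, and hence $h$, in the LeBrun--Tod normal form associated with a Hamiltonian circle action: the geometry is encoded by a single scalar $u$ on the $3$-dimensional quotient together with a companion potential $W$ which, under the Einstein normalization, is a universal affine expression in $u$ and $u_z$, and — this is the reduction already recorded in the introduction — the Einstein equation for $h$ becomes the Toda-type equation
\[
u_{xx}+u_{yy}+\bigl(e^{u}\bigr)_{zz}=0 .
\]
Topologically the $\mathbb{S}^1$-quotient of the degree-$deg$ line bundle over $\Sigma_{\ttg}$ is $\Sigma_{\ttg}\times[0,1)$, with $\Sigma_{\ttg}\times\{0\}$ the image of the zero section (the ``bolt'') and the opposite end supporting the conformal infinity; so the Toda equation is posed on $\Sigma_{\ttg}\times[0,1)$ and genuinely couples the collar evolution in $z$ to the Laplacian of the $z$-dependent conformal metric on $\Sigma_{\ttg}$, which is why the resulting families are infinite-dimensional.

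Next I would convert the geometric requirements into a Dirichlet problem for $u$. Smooth closing-up of $h$ across the bolt amounts to the vanishing of $W$ along $\Sigma_{\ttg}\times\{0\}$ at the correct indicial rate, together with a flux condition pinning $\tfrac1{2\pi}\int_{\Sigma_{\ttg}}d\alpha=deg$ for the connection $1$-form $\alpha$; this, with the constraints relating $\chi$, $\ma$, $\mpp$ and the parameter $k$, is where the integer $deg$ enters. At the conformal boundary the Poincar\'e--Einstein asymptotics force $u$ into a fixed logarithmic singular profile whose \emph{finite part} is a free function on $\Sigma_{\ttg}$; prescribing this finite part, normalized so that the orbit length equals $\mpp$, is the Dirichlet datum, and the resulting conformal infinity $[g^\flat]$ — a metric on a circle bundle over $\Sigma_{\ttg}$ with quotient $g^\natural$ — is then \emph{read off} from the solution's asymptotic expansion, which accounts for the word ``implicitly'' in the statement. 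Admissibility of $(deg,\chi,k,\ma,\mpp)$ is exactly the system of inequalities and quantization relations guaranteeing that a solution exists with $W>0$ and $e^{u}$ pinched between positive constants on the interior, i.e.\ that the metric neither collapses nor blows up; I would verify that these hold for at least one explicit seed — a homogeneous, constant-curvature-over-$\Sigma_{\ttg}$ analogue of the AdS--Taub--bolt / Page--Pope metric — which anchors the continuity method.

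Existence and uniqueness I would then establish by the continuity method. Joining $g^\natural$ to the seed datum through a path, openness follows from the implicit function theorem: the linearization is a second-order elliptic operator on $\Sigma_{\ttg}\times[0,1)$ with mixed boundary conditions (Dirichlet at the conformal infinity, a Robin/period condition at the bolt), Fredholm of index zero, and with trivial kernel because the conformal infinity is of non-positive Yamabe type and $\ttg\ge1$ — the source of the rigidity enjoyed by Poincar\'e--Einstein metrics with non-positive conformal infinity. Closedness requires a priori estimates: a $C^0$ bound and the non-collapse $c^{-1}\le W,\,e^{u}\le c$ drawn from the algebraic structure of the Toda system (a monotonicity in $z$ together with an integral identity over $\Sigma_{\ttg}$ controlled by $\ma$ and $\mpp$), a Bernstein-type gradient bound, and then interior Schauder estimates together with boundary regularity of Fefferman--Graham type, which bootstrap to smoothness of $h$ on $\overline M$; uniqueness follows from a comparison principle for the Toda equation with these boundary data. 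The main obstacle is precisely the closedness step — ruling out degeneration of the bolt (vanishing area, or cusp formation in the sense of Anderson) along the path — and the admissibility conditions are engineered so that the controlling integral identities stay uniformly bounded; the genus $\ge1$ hypothesis is what simultaneously supplies the vanishing of the linearized kernel and the maximum principles underlying these estimates.
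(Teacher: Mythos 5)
Your overall architecture matches the paper's: Derdzi\'nski/LeBrun--Tod reduction to a Toda-type equation on $\Sigma_{\ttg}\times[0,\tfrac12]$, a Dirichlet problem with the free datum living on the conformal-infinity end, existence by a deformation argument anchored at the explicit cohomogeneity-one (Page--Pope-type) decoupled solution, and regularity across the bolt. However, there are two concrete gaps. First, you never confront the fact that the equation \emph{degenerates} at the bolt: writing $w=\overline w+u$ with $\overline w$ the decoupled profile, the coefficient $e^{\overline w}$ of the second-order $\xi$-derivatives vanishes simply at $\xi=\tfrac12$, so the operator is not uniformly elliptic up to that end and your proposed ``Robin/period condition at the bolt'' does not obviously yield a Fredholm problem with the usual Schauder theory. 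The paper resolves this by the substitution $\xi=\tfrac12-\tfrac14|\mathbf z|^2$, lifting the equation to $\mathbb{R}^4\times\Sigma$ so that the bolt becomes an \emph{interior} locus and no boundary condition is imposed there at all; only the conformal-infinity end carries Dirichlet data. Relatedly, your trivial-kernel claim for the linearization is asserted from ``non-positive Yamabe type'' rather than proved; the actual mechanism is a conformal gauge transformation of the \emph{adjoint} operator (with factor $e^{q}$, $q=\log\psi+\int_0^\xi a$) that reduces its zeroth-order coefficient exactly to $2K_\Sigma\le 0$, after which the maximum principle kills the cokernel and Fredholm index zero kills the kernel.

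Second, the positivity of $W=\frac{12-6\xi w_\xi}{12+\xi^3/k^3}$ --- without which the solution does not define a Riemannian metric --- cannot be ``drawn from the algebraic structure of the Toda system'' as you claim: the $C^0$ comparison with $\overline w$ controls $u$ but not $\partial_\xi w$, and no sign on the gradient comes out of the estimates. The paper proves $W>0$ by a separate open-closed argument in the space of boundary data: along a limit of solutions with $W>0$, each already defines a smooth Poincar\'e--Einstein metric, and the Bochner formula $\tfrac12\Delta_h|\mathcal K|_h^2=|\nabla_h\mathcal K|_h^2-\operatorname{Ric}_h(\mathcal K,\mathcal K)\ge 0$ makes $|\mathcal K|_h^2=\xi^{-2}W^{-1}$ subharmonic, yielding a quantitative lower bound $W\ge \xi_0^2/(2\xi^2)$ that survives the limit. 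This geometric input is indispensable and is missing from your outline. Two smaller points: the equation you display, $u_{xx}+u_{yy}+(e^u)_{zz}=0$, is the untwisted Toda equation and is correct only in the anti-self-dual case $k=\pm\infty$; for finite $k$ the right-hand side carries the twist $-\xi e^{w}\frac{12-6\xi\partial_\xi w}{12k^3+\xi^3}$, and the comparison/maximum principles must be run against $\delta$-perturbed decoupled solutions to absorb it. And uniqueness in the paper comes from the properness-plus-local-diffeomorphism (covering map) structure of the boundary restriction map together with uniqueness at the decoupled datum, not from a direct comparison principle for two arbitrary solutions, which for this quasilinear equation would itself require justification.
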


The tuple $(deg,\chi,k,\ma,\mpp)$ is called \emph{admissible} if it satisfies the algebraic condition given in Definition~\ref{def:admissible five tuple}. In particular, for any fixed $deg$ and $\chi$, there always exist parameters $k,\ma,\mpp$ making the tuple admissible. The number $k$ here has the geometric meaning that for the PE fill-in $(M,h)$, $|k|\cdot (2\sqrt{6}\max|\mathscr{W}_h^+|_h)^{1/3}=\frac{1}{2}$ where $\mathscr{W}_h^+$ refers to the self-dual Weyl curvature of $(M,h)$, and the fill-in is anti-self-dual precisely when $k=\pm\infty$. The boundary metric $g^\flat$ is then determined implicitly by this tuple together with $(\Sigma_{\ttg},g^\natural)$. In the special case $k=\pm\infty$, the admissible condition means $deg<-\chi/2$, $\mpp=\pi$, and $\ma=-\pi(deg+\chi/2)$.

The PE manifolds arising from Theorem \ref{thm:main conclusion} are all \textit{conformally K\"ahler}. Our motivation comes from a result of LeBrun \cite{lebrun20}, who showed that if a closed Bach-flat Kähler surface is not Kähler–Einstein and has scalar curvature $s_g<0$ somewhere, then the zero set of $s_g$ divides the manifold into two regions, each carrying a Poincaré–Einstein metric with $s_g$ as a defining function. This connection between Kähler geometry and Poincaré–Einstein metrics motivates our focus on the conformally Kähler setting.

To place this in a broader context, we recall Derdziński’s characterization of Einstein metrics conformal to Kähler metrics. For an oriented Einstein $4$-manifold $(M,h)$, Derdziński \cite{derdzinski} showed that if the self-dual Weyl curvature $\mathscr{W}_h^+$ has a nonzero repeated eigenvalue everywhere (Type II), then $h$ is locally conformal to a Kähler metric $g$, which up to scaling by constants the conformal factor is canonically determined by $|\mathscr{W}_h^+|_h$. If moreover $h$ is not Kähler–Einstein (in which case the conformal change is trivial), the Kähler metric $g$ is actually an extremal Kähler metric with non-constant scalar curvature, and therefore admits a Killing field $\mathcal{K}$. As for anti-self-dual Einstein $4$-manifolds (that is, $\mathscr{W}^+_h\equiv0$) with non-zero Einstein constant, they need not admit a Killing field, but those that do are precisely the ones locally conformal to Kähler metrics. Conversely, by \cite{derdzinski}, any Einstein metric with non-zero Einstein constant that is conformal to a Kähler metric must be either anti-self-dual everywhere or of Type II. In both cases, the Einstein metric automatically admits a Killing field $\mathcal{K}$ that is canonically associated to the Kähler metric $g$ (see Section \ref{subsec:Type I with Kahler} and \ref{subsec:Type II with Kahler}).

These Killing fields play a crucial role in the conformally Kähler Poincaré–Einstein setting. On such a manifold $(M,h)$, a Killing field $\mathcal{K}$ extends to any conformal compactification as a conformal Killing field tangent to the boundary.  The Kähler conformal change $g$ yields a compactification precisely when $\mathcal{K}$ has no zeros on $\partial M$. Moreover, $\mathcal{K}$ is simultaneously a Killing field of both $h$ and $g$, and in the Type~II case it is always nowhere vanishing on the boundary.

We now focus on the case where $\mathcal{K}|_{\partial M}$ is periodic and integrates to a free $\mathbb{S}^1$-action on $\partial M$, so that $\partial M$ is a principal $\mathbb{S}^1$-bundle over a compact oriented smooth surface $\Sigma$. In this situation we say that $(M,h)$ has \emph{regular conformally Kähler geometry}. In this case, the $\mathbb{S}^1$-action is Hamiltonian with respect to the Kähler form, and one can perform Kähler reduction away from critical values of the moment map $\xi$ of $\mathcal{K}$ to write the Kähler metric $g$ in the form 
$$
g = W d\xi^2 + W^{-1}\eta^2 + W e^w g_{\Sigma},
$$
where $W,w$ functions of $\xi$ and the base $\Sigma$, and $\eta$ a multiple of a connection $1$-form, dual to $\mathcal{K}$. Here, $We^w g_\Sigma$ is the K\"ahler metric on the K\"ahler reduction $\Sigma$ and $g_\Sigma$ is chosen to be the metric with constant curvature $K_\Sigma\in\{0,\pm1\}$ on the Riemann surface $\Sigma$, and if $K_\Sigma=0$ the area of $g_{\Sigma}$ is normalized to be $4\pi^2$. By scaling $g$ properly, we can arrange  that $$g=\xi^2h, \quad\mathcal{K}=J\nabla_g\xi,$$ and the moment map $\xi$ to have range $[0,\frac{1}{2}]$ on $\overline{M}$, where $\partial M=\{\xi=0\}$. The choice of $[0,\frac{1}{2}]$ is natural as this is the range for the hyperbolic ball $B^4$ (see Example \ref{example:hyperbolic space}). The moment map $\xi$ is a Morse-Bott function, as previously observed by LeBrun \cite{lebrun20}. One then derives that $W,w$ satisfy the following PDE system:
\begin{equation}\label{eq:introduction-equation-w}
(e^{w})_{\xi\xi}+\Delta_{\Sigma}w-2K_{\Sigma}
=-\xi e^{w}\frac{12-6\xi \partial_{\xi}w}{12k^3+\xi^3},
\end{equation}
\begin{equation}\label{eq:introduction-equation-W}
W=\frac{12-6\xi w_{\xi}}{12+\xi^3/k^3},
\end{equation}
where $k$ is a constant arising from the previous scaling and satisfies
$$|k|\cdot (2\sqrt{6}\max|\mathscr{W}_h^+|_h)^{1/3}=\frac{1}{2},$$ 
measuring how far the Einstein metric is from being anti-self-dual (see Section \ref{subsec:canonical conformal change}). The metric is anti-self-dual precisely when $k=\pm\infty$. This reduction is essentially due to LeBrun \cite{lebrun}, and Tod \cite{tod95}.

Next we move to the case that the topology of $M$ is relatively simple, where the Morse-Bott function $\xi$ only has an isolated critical point or a critical surface, at which the maximum of $\xi$ is achieved. The underlying manifold $M$ is diffeomorphic to $B^4$ or a complex line bundle over a closed Riemann surface $\Sigma$.
The following construction arises from explicit solutions to \eqref{eq:introduction-equation-w}–\eqref{eq:introduction-equation-W} which depends only on $\xi$:

\begin{theorem}\label{thm:introduction cohomogenity one examples}
On $B^4$, or on any $M$ diffeomorphic to the total space of a complex line bundle over a closed Riemann surface $\Sigma$, there exists a one-parameter family of cohomogeneity-one Poincar\'e--Einstein metrics with regular conformally Kähler geometry. 
\end{theorem}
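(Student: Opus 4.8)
The plan is to look for solutions of \eqref{eq:introduction-equation-w}--\eqref{eq:introduction-equation-W} in which $w$, and hence the whole metric $g=W\,d\xi^{2}+W^{-1}\eta^{2}+We^{w}g_{\Sigma}$, depends only on the moment map $\xi$; these are exactly the cohomogeneity-one solutions. Under this ansatz $\Delta_{\Sigma}w=0$, so putting $F=e^{w}$ turns \eqref{eq:introduction-equation-w} into the linear ODE
\begin{equation*}
(12k^{3}+\xi^{3})\,F''-6\xi^{2}F'+12\xi F=2K_{\Sigma}(12k^{3}+\xi^{3}),
\end{equation*}
which degenerates to $F''=2K_{\Sigma}$ when $k=\pm\infty$. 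One observes that $K_{\Sigma}\xi^{2}$ is a particular solution and that $\xi^{3}-6k^{3}$ and $\xi^{4}-24k^{3}\xi$ span the homogeneous solutions, so the general cohomogeneity-one solution is
\begin{equation*}
e^{w}=K_{\Sigma}\xi^{2}+a(\xi^{3}-6k^{3})+b(\xi^{4}-24k^{3}\xi),\qquad a,b\in\mathbb{R},
\end{equation*}
and then \eqref{eq:introduction-equation-W} gives the compact formulas $We^{w}=-6k^{3}(a+2b\xi)$ and $W=-6k^{3}(a+2b\xi)/e^{w}$. Reversing the reduction of LeBrun \cite{lebrun} and Tod \cite{tod95}, any such pair with $e^{w},W>0$ on an interval produces a cohomogeneity-one Einstein metric $h=g/\xi^{2}$ with $\operatorname{Ric}=-3h$ over the associated circle bundle; the task is then to pin the parameters so that this metric closes up smoothly on $B^{4}$ or on a disk bundle over $\Sigma$.

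Next I would impose the boundary behavior. At $\xi=0$ the explicit formula gives $W(0)=1$ as soon as $e^{w}(0)=-6ak^{3}>0$, hence $|d\xi|_{g}=1$ there and $\{\xi=0\}$ is a genuine conformal infinity; these requirements are open conditions on $(k,a,b)$. Using the scaling of $g$ to normalize the range to $\xi\in[0,\tfrac12]$, I would require smooth closure at the interior end $\xi=\tfrac12$, distinguishing a \emph{bolt} (where $\Sigma$ survives and only the $\eta$-circle collapses) from a \emph{NUT} (where the whole circle bundle collapses to a point, giving $B^{4}$). The bolt demands $e^{w}(\tfrac12)=0$, $We^{w}(\tfrac12)>0$, and a smoothness/period matching at the collapsing circle whose numerical value records the degree of the bundle; the NUT demands in addition $We^{w}(\tfrac12)=0$, i.e.\ $a+b=0$. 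A short computation provides the key mechanism here: when $a+2b\xi_{0}=0$ one has $F'(\xi_{0})=\tfrac{2}{\xi_{0}}F(\xi_{0})$ and $F''(\xi_{0})=2K_{\Sigma}$, so in the NUT case the simple zero of $e^{w}$ automatically becomes a double zero with exactly the round rate while $W$ keeps only a simple pole --- precisely what smoothness of $B^{4}$ at the fixed point requires.

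The parameter count then comes out one-dimensional: the closure conditions are only two-dimensional within the three parameters $(k,a,b)$, so after normalization one is left with a one-parameter family, naturally parametrized by $k$, whose $k=\pm\infty$ members are anti-self-dual. In the NUT case $b=-a$ together with $e^{w}(\tfrac12)=0$ force $a=-4K_{\Sigma}/(1+96k^{3})$, and $k\to\pm\infty$ recovers the hyperbolic metric on $B^{4}$ (there $e^{w}=(\xi-\tfrac12)^{2}$); in the bolt case $e^{w}(\tfrac12)=0$ together with the degree-dependent matching again cut $(k,a,b)$ down to a curve. For each prescribed topology --- $B^{4}$, or a disk bundle of any degree over a surface of any genus --- I would check that this curve is nonempty and contains an interval of $k$ on which $e^{w},W>0$ throughout $(0,\tfrac12)$ and $e^{w}(0)>0$; the resulting metrics include the Pedersen metrics on $B^{4}$ \cite{Pedersen1986} and the Page--Pope metrics over $\mathbb{S}^{2}$ \cite{PagePope1987}. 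Since the construction inverts the reduction, each member is automatically Poincar\'e--Einstein with regular conformally K\"ahler geometry, and its parameter $k$ coincides with the invariant determined by $|k|\cdot(2\sqrt{6}\max|\mathscr{W}^{+}_{h}|_{h})^{1/3}=\tfrac12$.

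The hard part will be this last step: the smooth-closure analysis at $\xi=\tfrac12$ together with the positivity it presupposes. One must verify, uniformly in the topological data, that the bolt and NUT matching conditions are mutually compatible and select the intended principal $\mathbb{S}^{1}$-bundle of the prescribed degree; that the normalization $\xi\in[0,\tfrac12]$ is always attainable and compatible with the period constraints; and --- most delicately --- that the explicit polynomials $e^{w}$ and $W$ stay positive on all of $(0,\tfrac12)$ for a genuine interval of $k$, so that the metric is honestly defined and complete on $M$. Isolating the correct branch among the solutions of the polynomial closure equations is the remaining point that needs care.
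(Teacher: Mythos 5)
Your reduction is the same one the paper uses: impose that $w$ depends only on $\xi$, so that \eqref{eq:introduction-equation-w} becomes a second-order linear ODE for $F=e^{w}$, and your explicit general solution $e^{w}=K_{\Sigma}\xi^{2}+a(\xi^{3}-6k^{3})+b(\xi^{4}-24k^{3}\xi)$ with $We^{w}=-6k^{3}(a+2b\xi)$ is exactly \eqref{eq:decoupled solutions w}--\eqref{eq:decoupled solutions W} after the change of constants $a=-\mathfrak{a}/(6k^{3}\mathrm{Vol}_{\Sigma})$, $b=-deg\cdot\mathfrak{p}/(12k^{3}\mathrm{Vol}_{\Sigma})$ (the paper derives the same quartic by integrating the PDE over the fibers $\{\xi\}\times\Sigma$, which also identifies the coefficients with the geometric data $deg,\mathfrak{p},\mathfrak{a}$). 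Your NUT computation, including $a=-4K_{\Sigma}/(1+96k^{3})$ and the automatic double zero with $F''(\tfrac12)=2K_{\Sigma}$, is correct and matches Proposition~\ref{prop:range for k S2 nut}.

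However, the step you defer to the end is not a routine check --- it is where essentially all of the work in the paper's Section~\ref{sec:decoupled solutions} lies, and as written your argument has a genuine gap there. First, positivity of the quartic $e^{w}$ on $[0,\tfrac12)$ together with $We^{w}>0$ does \emph{not} hold for a naive open set of parameters: one must determine exactly which $(deg,\chi,k)$ are admissible, and the answer (Propositions~\ref{prop:range for k T2}--\ref{prop:range for k S2 bolt}) requires a delicate sign analysis of the quartic --- e.g.\ proving monotonicity of a ratio $q(\xi)/p(\xi)$ of the two relevant polynomials, or convexity arguments for $A'(\xi)$ --- and produces constrained, case-dependent ranges such as $k\in(\tfrac{1}{\sqrt[3]{192}},\tfrac{1}{\sqrt[3]{48}})$ with $deg>-\chi\frac{96k^{3}+1}{192k^{3}-1}$. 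Without this one cannot conclude that the family is nonempty for \emph{every} degree and genus, which is what the theorem asserts. Second, your claim that the family is ``naturally parametrized by $k$'' fails in the exceptional case $deg=-\chi$ (including $deg=0$ over $T^{2}$): there the closure conditions force $k=\tfrac{1}{\sqrt[3]{48}}$ and the one-parameter family is parametrized by the boundary area $\mathfrak{a}$ instead; your dimension count must account for this branch. Third, the bolt/NUT matching conditions you impose (simple zero of $W^{-1}$, period $\mathfrak{p}/2A_{0}=2\pi$) only make the metric Lipschitz in the natural coordinates $x_{1}=\tau\cos\theta$, $x_{2}=\tau\sin\theta$ with $\tau=\sqrt{\mathfrak{p}/\pi}(\tfrac12-\xi)^{1/2}$; to conclude smoothness of the Einstein metric across the collapsing locus the paper invokes elliptic regularity for Einstein metrics in harmonic-type gauge (Section~\ref{subsec:conclusion}), and some such argument is needed in your proof as well.
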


Moreover, this family contains an anti-self-dual metric if and only if the degree of the complex line bundle satisfies $deg < \min\{-\chi(\Sigma), -\chi(\Sigma)/2\}$. In particular, this recovers the constructions of Page–Pope \cite{PagePope1987}, Oliveira–Sena Dias \cite{OliveiraSenaDias2025}, and Pedersen \cite{Pedersen1986}.

In general, supposing that $\ttg(\Sigma)\geq 1$ and the tuple $(deg,\chi,k,\ma,\mpp)$ is admissible in the sense of Definition \ref{def:admissible five tuple},  we prove that the following Dirichlet boundary value problem admits unique solution:
\begin{theorem}\label{thm:introduction existence and uniqueness} 
    For any $\varphi\in C^{2,\alpha}(\Sigma)$ with $\int_{\Sigma} e^\varphi d\mathrm{vol}_{{\Sigma}}=\mathfrak{a}$, there exists a unique solution $w$ of 
    \begin{equation}
        \left\{
            \begin{aligned}
                &(e^w)_{\xi\xi}+\Delta_\Sigma w-2K_\Sigma=-\xi e^w\frac{12-6\xi \partial_\xi w}{12k^3+\xi^3},\\
                &w|_{\xi=0}=\varphi,
            \end{aligned}
        \right.
    \end{equation}
    such that $w-\log(\frac{1}{2}-\xi)\in C^{2,\alpha}([0,\frac{1}{2}]\times \Sigma)$. Moreover, $w-\log(\frac{1}{2}-\xi)\in C^{\infty}((0,\frac{1}{2}]\times \Sigma)$.
\end{theorem}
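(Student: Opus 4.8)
The plan is to recast the degenerate Dirichlet problem as a quasilinear elliptic boundary value problem on the compact cylinder $[0,\tfrac12]\times\Sigma$ and solve it by the method of continuity, anchored at the explicit cohomogeneity-one solutions of Theorem~\ref{thm:introduction cohomogenity one examples}. Set $u=w-\log(\tfrac12-\xi)$; since $e^w=(\tfrac12-\xi)e^u$, substituting and expanding $(e^w)_{\xi\xi}$ turns the equation into
\[
e^u\big(\tfrac12-\xi\big)\,u_{\xi\xi}+\Delta_\Sigma u-2K_\Sigma+\big(\text{terms of order}\le 1\text{ in }u\big)=0,
\]
a quasilinear operator whose principal symbol is $e^u(\tfrac12-\xi)\,\partial_\xi^2+\Delta_\Sigma$. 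Because $e^u>0$, this is uniformly elliptic on $[0,\tfrac12-\varepsilon]\times\Sigma$ for every $\varepsilon>0$ and degenerates only along $\{\xi=\tfrac12\}$. Writing $y=\tfrac12-\xi$ and dividing by $e^u$, the equation near $y=0$ takes the form $y\,u_{yy}+2u_y+(\text{smooth, bounded})=0$ in the $\xi$-direction, coupled to the nondegenerate operator $e^{-u}\Delta_\Sigma$ in the $\Sigma$-directions; the Fuchsian part $y\partial_y^2+2\partial_y$ has indicial roots $0$ and $-1$, so no boundary condition may be imposed at $\xi=\tfrac12$, and requiring $u$ to stay bounded there --- equivalently $w\sim\log(\tfrac12-\xi)$ --- is the natural and forced condition. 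Geometrically $\xi=\tfrac12$ is the Morse--Bott critical locus where the $\mathbb S^1$-fibres collapse to $\Sigma$; passing to the desingularising coordinate $t$ with $t^2\propto(\tfrac12-\xi)$ replaces the equation by an honest nondegenerate $\mathbb S^1$-invariant elliptic equation across the collapsed surface, which is why one expects $C^\infty$ regularity up to $\xi=\tfrac12$. At the other end $\xi=0$ the equation is nondegenerate elliptic with Dirichlet data $\varphi-\log\tfrac12\in C^{2,\alpha}$, so $C^{2,\alpha}$ up to $\xi=0$ (and no better) is all one should expect.

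For existence I would run the continuity method along a path $\varphi_s$, $s\in[0,1]$, inside the path-connected set $\{\varphi\in C^{2,\alpha}(\Sigma):\int_\Sigma e^\varphi\,d\mathrm{vol}_\Sigma=\ma\}$, joining the constant $\varphi_0$ with $\int_\Sigma e^{\varphi_0}\,d\mathrm{vol}_\Sigma=\ma$ --- for which the $\xi$-only solution of Theorem~\ref{thm:introduction cohomogenity one examples} (with the same admissible tuple) furnishes a solution --- to $\varphi_1=\varphi$. Openness of the set of solvable $s$ follows from the implicit function theorem once the linearisation $\mathcal L_{u_s}$ at a solution is shown to be an isomorphism from the appropriate weighted $C^{2,\alpha}$-space (Dirichlet at $\xi=0$, no constraint at $\xi=\tfrac12$) onto $C^{0,\alpha}$; using the equation to simplify, $\mathcal L_{u_s}$ has zeroth-order coefficient $2K_\Sigma-\Delta_\Sigma u_s$ and inherits the same Fuchsian degeneracy at $\xi=\tfrac12$, so this reduces to a weighted Schauder estimate together with triviality of its kernel under these boundary conditions. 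Closedness is exactly the statement that solutions along the path enjoy uniform a priori $C^{2,\alpha}$ bounds.

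Those a priori bounds are the crux, and I expect the uniform $C^0$ bound and the control at the degenerate end $\xi=\tfrac12$ to be the main obstacles. Integrating the original equation over $\Sigma$ at fixed $\xi$ yields a linear second-order ODE for $A(\xi):=\int_\Sigma e^{w(\xi,\cdot)}\,d\mathrm{vol}_\Sigma$ with $A(0)=\ma$ and the forced endpoint value $A(\tfrac12)=0$; the admissibility condition of Definition~\ref{def:admissible five tuple} is precisely what makes this two-point problem consistent and keeps $A$ positive on $[0,\tfrac12)$, and it pins down $\int_\Sigma e^w$ at every level. Coupling this with the sign of the nonlinearity for admissible parameters --- for instance $(e^w)_{\xi\xi}+\Delta_\Sigma w\le 2K_\Sigma\le0$ when $k>0$ and $W>0$ --- and with comparison against the cohomogeneity-one solutions whose boundary constants bracket $\varphi$, one bootstraps to two-sided bounds on $w-\log(\tfrac12-\xi)$, carrying along the open condition $W=\frac{12-6\xi w_\xi}{12+\xi^3/k^3}>0$ throughout. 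Given $C^0$ control, interior and lower-boundary gradient estimates are standard Bernstein and De Giorgi--Nash--Moser arguments in the nondegenerate region; the delicate point is the gradient (and higher) estimate near $\xi=\tfrac12$, where one works in the desingularised coordinate or uses weighted estimates tailored to the $y\,u_{yy}+2u_y$ structure. Schauder theory then gives $C^{2,\alpha}$ up to both ends, and interior elliptic regularity upgrades this to $C^\infty$ on $(0,\tfrac12]\times\Sigma$.

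For uniqueness, two solutions $w_1,w_2$ with the same Dirichlet data produce, via the divided-difference identity, a function $v=w_1-w_2$ solving a linear degenerate-elliptic equation with $v|_{\xi=0}=0$ and $v$ bounded at $\xi=\tfrac12$; since the ODE for $A$ already forces $\int_\Sigma(e^{w_1}-e^{w_2})\,d\mathrm{vol}_\Sigma=0$ for every $\xi$, and since the associated linear operator --- essentially the one whose invertibility underlies the continuity method --- has trivial kernel under these boundary conditions (a maximum-principle or energy argument exploiting the precise zeroth-order structure), one concludes $v\equiv0$. The step I expect to be the most technically demanding is developing the weighted Schauder theory and kernel-triviality for the linearised operator at the Fuchsian end $\xi=\tfrac12$, together with the uniform $C^0$ bound; the remaining pieces assemble into a fairly standard, if lengthy, continuity-method argument.
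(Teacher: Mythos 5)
Your overall architecture (remove the degeneracy at $\xi=\tfrac12$ by a quadratic change of variable, get $C^0$ bounds by comparison/maximum principle, linearize and run an openness--closedness argument, and prove uniqueness by a maximum principle) is the same skeleton as the paper's proof, but two of the steps you defer or sketch are exactly where the real content lies, and as stated they do not go through. First, the desingularization: the Fuchsian operator you correctly identify, $y\,\partial_y^2+2\partial_y$ with $y=\tfrac12-\xi$, is the radial part of the Laplacian on $\mathbb{R}^{4}$, not of an $\mathbb{S}^1$-invariant operator on a $2$-disk. If you only set $t^2\propto y$ in one or two dimensions you get $\partial_t^2+\tfrac{3}{t}\partial_t$, which is still degenerate, so the problem does not become ``an honest nondegenerate elliptic equation'' as you claim. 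The paper lifts to $\Omega=\{|\mathbf z|<\sqrt2\}\times\Sigma\subset\mathbb{R}^4\times\Sigma$ via $\xi=\tfrac12-\tfrac14|\mathbf z|^2$, which makes the equation uniformly elliptic in divergence form with the former boundary $\xi=\tfrac12$ now interior; De Giorgi--Nash--Moser plus Schauder then give all estimates, and a separate lemma (radial functions of class $C^{2m}$ give $C^m$ profiles in $|\mathbf z|^2$) converts back to regularity in $\xi$.

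Second, and more seriously, the invertibility of the linearization --- which you acknowledge as the crux but do not address --- cannot be obtained from the zeroth-order coefficient you write down ($2K_\Sigma-\Delta_\Sigma u_s$ has no sign), and trivial kernel alone would not give openness for a non-self-adjoint operator without first knowing the index. The paper's resolution has two ingredients you are missing: (i) the Fredholm index of the lifted linearization is $0$ by homotopy to $\Delta_x+\Delta_{\mathbf z}$; (ii) the \emph{cokernel} is shown to vanish by conjugating the formal adjoint, $L^*_{\mathbf u,q}=e^{-q}L^*_{\mathbf u}(e^{q}\,\cdot)$ with $q=\log\psi+\int_0^\xi a$, after which the zeroth-order coefficient collapses (using the ODE satisfied by $e^{\overline w}$) to exactly $2K_\Sigma\le 0$, so the maximum principle applies; this is precisely where $\mathtt{g}\ge1$ enters. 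Your direct uniqueness argument for two solutions with the same data runs into the same unsigned zeroth-order term; the paper instead proves uniqueness only at the trivial boundary datum and upgrades it globally via properness of the boundary-restriction map (a proper local diffeomorphism between connected metric spaces with a singleton fiber is a global diffeomorphism), which simultaneously yields existence without having to produce a path of a priori bounded solutions. Until you supply a mechanism for the kernel/cokernel triviality and correct the dimension of the lift, the proposal has a genuine gap at its central step.
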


The elliptic equation above is degenerate at the boundary $\xi=\frac{1}{2}$. But the degeneration is mild and we shall overcome this by lifting the equation to an equation on $\mathbb{R}^4\times\Sigma$ by setting $\xi=\frac{1}{2}-\frac{1}{4}|\mathbf{z}|^2$ with $\mathbf{z}$ parametrizes $\mathbb{R}^4$, where the boundary $\{\frac{1}{2}\}\times\Sigma$ becomes interior part of the domain. The tuple $(deg,\chi,k,\ma,\mpp)$ being admissible, together with a regularity result for Einstein metrics, guarantee that solutions $w$ with smooth boundary value correspond to smooth Poincar\'e-Einstein fill-ins, with the underlying manifold diffeomorphic to  complex line bundles over $\Sigma$. In particular, the corresponding function $W$ defined by \eqref{eq:introduction-equation-W} is positive on $[0,\tfrac{1}{2})$ and blows up at $\xi=\frac{1}{2}$, which corresponds to the fixed surface of the $\mathbb{S}^1$-action. From such a solution $w$ and the admissible tuple, we consider the $\mathbb{S}^1$–bundle $P\to[0,\tfrac12)\times\Sigma$ of degree $deg$, together with a $1$–form $\eta$ such that $\tfrac{2\pi}{\mathfrak{p}}\eta$ is a connection $1$–form, where the curvature of the bundle is determined by
$$
d\eta=(W e^{w})_{\xi}\,d\!\operatorname{vol}_{\Sigma}
+(J_{\Sigma}\cdot d_{\Sigma}W)\wedge d\xi.
$$
Then by a regularity result, the completion of the metric 
$$
g=Wd\xi^2+W^{-1}\eta^2+We^w g_{\Sigma}
$$
is a smooth Kähler metric on $M$, where diffeomorphically 
$M \to \Sigma$ is the total space of the complex line bundle of degree $deg$, and $(M,\xi^{-2}g)$ is the Poincar\'e--Einstein fill-in. This indicates Theorem \ref{thm:main conclusion}. Finally, we also show that 
\begin{theorem}\label{thm:introduction uniqueness of poincare einstein}
    Every Poincar\'e--Einstein manifold with regular conformally K\"ahler geometry on the total space of a complex line bundle over an oriented surface of genus greater than zero arises from the construction correpsonds to an admissible tuple and a solution $w$ in Theorem \ref{thm:introduction existence and uniqueness}. 
\end{theorem}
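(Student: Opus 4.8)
The strategy is to run the K\"ahler reduction set up in the introduction in reverse: starting from an abstract Poincar\'e--Einstein manifold $(M,h)$ of the stated type, I would recover the canonical K\"ahler metric $g$, the Killing field $\mathcal{K}$, the moment map $\xi$, and the reduced profiles $W,w$, and then check that they satisfy the PDE, the boundary behavior, and the algebraic constraints of the construction. First, by Derdzi\'nski's theorem recalled above, $h$ is either anti-self-dual or of Type~II, and in either case the conformal factor canonically determined by $|\mathscr{W}_h^+|_h$ produces a K\"ahler metric $g$ and a Killing field $\mathcal{K}$ that is simultaneously Killing for $h$ and $g$; by the definition of regular conformally K\"ahler geometry, $\mathcal{K}$ is nowhere zero on $\partial M$, so $g$ extends to a conformal compactification, and $\mathcal{K}|_{\partial M}$ integrates to a free isometric $\mathbb{S}^1$-action. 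Since $\mathcal{K}$ is a Killing field of the compact manifold $(\overline{M},g)$ that is periodic along the hypersurface $\partial M$, and an isometry of $\overline{M}$ fixing $\partial M$ pointwise must be the identity, the flow of $\mathcal{K}$ is periodic on all of $\overline{M}$ with the same period; hence the $\mathbb{S}^1$-action extends to $\overline{M}$, and one can normalize $g=\xi^2 h$ with $\xi$ the moment map, $\mathcal{K}=J\nabla_g\xi$, $\xi\in[0,\tfrac12]$, and $\partial M=\{\xi=0\}$, exactly as in the discussion preceding \eqref{eq:introduction-equation-w}.

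Next I would fix the topology of the moment map. By LeBrun's observation $\xi$ is Morse--Bott on $\overline{M}$, with critical set consisting of $\partial M$ (the minimum $\xi=0$) and the fixed locus of the $\mathbb{S}^1$-action. Because $\overline{M}$ is the disk bundle associated with the complex line bundle over the genus-$\ttg$ surface, the fixed locus is the zero section, a surface $\Sigma'\cong\Sigma_{\ttg}$ on which $\xi$ attains its maximum $\tfrac12$ and which the $\mathbb{S}^1$-action fixes while rotating the normal disks. For $0\le c<\tfrac12$ each $\{\xi=c\}$ is a principal $\mathbb{S}^1$-bundle over the K\"ahler reduction, and the gradient flow of $\xi$ identifies all of these reductions with a single compact Riemann surface $\Sigma\cong\Sigma_{\ttg}$; since $W|_{\xi=0}=1$ by \eqref{eq:introduction-equation-W}, the reduced boundary metric is $e^{\varphi}g_\Sigma$ with $\varphi:=w|_{\xi=0}$, so $K_\Sigma\in\{0,-1\}$. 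On $\{0<\xi<\tfrac12\}$ the reduction gives $g=W\,d\xi^2+W^{-1}\eta^2+We^w g_\Sigma$, and the Einstein equation for $\xi^{-2}g$ becomes \eqref{eq:introduction-equation-w}--\eqref{eq:introduction-equation-W}, with $k$ fixed by $|k|\cdot(2\sqrt{6}\max|\mathscr{W}_h^+|_h)^{1/3}=\tfrac12$ (and $k=\pm\infty$ when $h$ is anti-self-dual), the sign of $k$ being pinned by the orientation.

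The two ends supply the regularity. At $\xi=0$ the compactification $g=\xi^2 h$ is smooth up to $\partial M$, so $\varphi\in C^{2,\alpha}(\Sigma)$ and $\ma:=\int_\Sigma e^{\varphi}\,d\mathrm{vol}_{g_\Sigma}$ is the area of the boundary reduction. At $\xi=\tfrac12$ smoothness of $g$ across the collapsing critical surface $\Sigma'$ forces $W\sim(\tfrac12-\xi)^{-1}$ and $w-\log(\tfrac12-\xi)$ to extend in $C^{2,\alpha}$ --- precisely the class in Theorem~\ref{thm:introduction existence and uniqueness} --- while interior smoothness on $(0,\tfrac12]\times\Sigma$ follows from ellipticity after resolving the degenerate locus $\xi=\tfrac12$ as in the proof of that theorem. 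The remaining data are the degree $deg\in\mathbb{Z}$ of the line bundle, recovered as the Euler number of $\partial M\to\Sigma$ from the curvature form $d\eta=(We^w)_\xi\,d\mathrm{vol}_\Sigma+(J_\Sigma\cdot d_\Sigma W)\wedge d\xi$, and the common length $\mpp$ of the $\mathbb{S}^1$-orbits on $(\partial M,g^\flat)$. The requirements that $deg$ be an integer, that $g$ close up smoothly along $\Sigma'$, and that $\xi$ reach $\tfrac12$ with the correct leading behavior should together amount to the algebraic constraints of Definition~\ref{def:admissible five tuple}, so that $(deg,\chi,k,\ma,\mpp)$ with $\chi=2-2\ttg$ is admissible and $(\Sigma,e^{\varphi}g_\Sigma)=(\Sigma_{\ttg},g^\natural)$. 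Hence $w$ solves the Dirichlet problem of Theorem~\ref{thm:introduction existence and uniqueness} with boundary value $\varphi$; by the uniqueness part it is \emph{the} solution, and reassembling $g=W\,d\xi^2+W^{-1}\eta^2+We^w g_\Sigma$ over the line bundle and passing to $\xi^{-2}g$ recovers $(M,h)$, which therefore arises from the construction.

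I expect the main obstacle to be the admissibility step: converting the smoothness of $g$ at the critical surface $\Sigma'$ --- together with the integrality of $deg$ and the periodicity of $\mathcal{K}$ --- into the precise equalities and inequalities of Definition~\ref{def:admissible five tuple}, and in particular matching the orbit length $\mpp$ and the parameter $k$ with the algebra there (a Duistermaat--Heckman/period computation at $\Sigma'$ relating the holonomy of $\eta$ to the collapsing fibers). A secondary point needing care is the global Morse--Bott picture of the second paragraph: ruling out additional critical level sets and showing that $\xi$ attains $\tfrac12$ exactly on the zero section, using the topology of $M$ together with the completeness of $h$.
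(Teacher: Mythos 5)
Your strategy coincides with the paper's: reverse the K\"ahler reduction, fix the Morse--Bott structure of $\xi$ (the paper's Corollary~\ref{cor:topology over riemann surface} gives that the only critical locus besides $\partial M$ is a single bolt of genus $\ttg$), read off $(deg,\chi,k,\ma,\mpp)$ and $w$, and invoke the uniqueness of Theorem~\ref{thm:existence and uniqueness pde}. But the two points you defer with ``should amount to'' and ``forces'' are exactly where the paper's proof lives, so they are genuine gaps as written. For admissibility, no Duistermaat--Heckman computation is needed: integrating \eqref{eq:d eta final} and the Toda equation over the $\Sigma$-fibers shows that for an \emph{arbitrary} (not just decoupled) regular solution the fiber averages obey the closed formulas \eqref{eq:integral We^w} and \eqref{eq:integral e^w}, i.e.\ $e^{\overline w}$ is literally the explicit quartic polynomial of the cohomogeneity-one case. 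The endpoint conditions at the bolt --- simple vanishing of $W^{-1}$ at $\xi=\tfrac12$, absence of cone angle $\mpp/2A_0=2\pi$, and positivity of the averaged $We^w$ and $e^w$ --- are then conditions on that polynomial alone and reproduce verbatim the algebra of Definition~\ref{def:admissible five tuple}. This is what the paper means by the computations holding ``in the averaged sense.''

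For the boundary class at the bolt, ``smoothness of $g$ forces $w-\log(\tfrac12-\xi)\in C^{2,\alpha}$'' is not yet an argument, since $w$ is only defined through the reduction on $\{\xi<\tfrac12\}$ and the reduction degenerates there. The paper's mechanism is geometric: each $\mathbb{C}^*$-orbit through a bolt point carries the metric $W\,d\xi^2+W^{-1}d\theta^2$, whose Gauss curvature is $-\tfrac12(W^{-1})_{\xi\xi}$; as this is the restriction of a smooth function on $M$, $(W^{-1})_{\xi\xi}$ is bounded. Combined with the Morse--Bott property ($W^{-1}=|\nabla_g\xi|_g^2$ vanishes simply along the bolt) and the no-cone-angle condition (which makes the leading coefficient $A_0$ constant), this gives $W=A_0(\tfrac12-\xi)^{-1}+O(1)$ with tangential derivative bounds, hence via \eqref{eq:W final} a Lipschitz bound on $u=w-\log(\tfrac12-\xi)$ up to $\xi=\tfrac12$. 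Only with this a priori control can one lift to the equation on $\{|\mathbf z|<\sqrt2\}\times\Sigma$, where the bolt becomes an interior locus, and bootstrap by interior elliptic regularity and Lemma~\ref{lem:half regularity} to $u\in C^\infty$ near $\xi=\tfrac12$. Once both steps are supplied, your concluding appeal to uniqueness goes through as you wrote it.
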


\medskip
{\bf Notations and Conventions}

\begin{itemize}
    \item We abbreviate Poincar\'e-Einstein 4-manifolds (see Definition \ref{def:basic}) as PE manifolds. 
    \item We denote a regular conformally K\"ahler PE (see Definition \ref{def:PE conformally Kahler} and \ref{def:regular conformally Kahler}) with its canonical conformal change (see Definition \ref{def:canonical infinity}) as a triple $(M,h,g)$. 
    \item Denote the genus of an oriented smooth surface by $\ttg$.
    \item All Einstein manifolds are normalized to satisfy $\operatorname{Ric}=-3h$.
    \item Given a Riemann surface $\Sigma$, we denote by $g_{\Sigma}$ the metric with Gauss curvature $K_\Sigma\in\{0,\pm1\}$, and if $K_{\Sigma}=0$ the area of $g_{\Sigma}$ is normalized to be $4\pi^2$.
\end{itemize}

\medskip
{\bf Acknowledgment}
Both authors are grateful to Song Sun and Ruobing Zhang for many helpful discussions. The first author also thanks IASM at Zhejiang University for its hospitality during his visits in 2024 and Spring 2025, during which part of this work was carried out. The second author is indebted to Sun-Yung Alice Chang for many helpful conversations on Poincaré–Einstein manifolds, and is also grateful to Paul Yang and Yuxin Ge for related discussions.

\section{Poincar\'e-Einstein metrics with conformally K\"ahler geometry}

On an oriented Riemannian 4-manifold \( (M^4,h) \), the bundle of 2-forms decomposes into self-dual and anti-self-dual components:
\[
\Lambda^2 T^*M = \Lambda^+ \oplus \Lambda^-.
\]
With respect to this splitting, the Riemann curvature operator  
\[
\mathrm{Rm} : \Gamma(\Lambda^+ \oplus \Lambda^-) \longrightarrow \Gamma(\Lambda^+ \oplus \Lambda^-)
\]
admits the following decomposition:
\[
\mathrm{Rm} =
\begin{pmatrix}
\mathscr{W}^+ + \tfrac{s_h}{12}\,\mathrm{Id} & \mathring{\mathrm{Ric}} \\[6pt]
\mathring{\mathrm{Ric}} & \mathscr{W}^- + \tfrac{s_h}{12}\,\mathrm{Id}
\end{pmatrix},
\]
where \( \mathscr{W}^+ : \Gamma(\Lambda^+) \to \Gamma(\Lambda^+) \) and \( \mathscr{W}^- : \Gamma(\Lambda^-) \to \Gamma(\Lambda^-) \) denote the self-dual and anti-self-dual components of the Weyl tensor \( \mathscr{W} \) respectively, $s_h$ is the scalar curvature, and \( \mathring{\mathrm{Ric}} \) is the trace-free part of the Ricci tensor. Viewing \( \mathscr{W}^+ \) as a symmetric, trace-free endomorphism, we order its eigenvalues as \( \lambda_1(\mathscr{W}^+) \geq \lambda_2(\mathscr{W}^+) \geq \lambda_3(\mathscr{W}^+) \), satisfying the identity \( \lambda_1(\mathscr{W}^+) + \lambda_2(\mathscr{W}^+) + \lambda_3(\mathscr{W}^+) = 0 \). Denote $Spec(\mathscr{W}^+)=\{\lambda_1(\mathscr{W}^+), \lambda_2(\mathscr{W}^+), \lambda_3(\mathscr{W}^+)\}$.

An Einstein metric $h$ belongs to exactly one of the following categories, as established by Derdzi\'nski \cite{derdzinski}:
\begin{itemize}
	\item (Type I, or ASD)  if $\mathscr{W}^+=0$ everywhere.
	\item (Type II) $\mathscr{W}^+$ has two distinct eigen-values everywhere. 
	\item (Type III) $\mathscr{W}^+$ generically has three distinct eigen-values.
\end{itemize}

\begin{lemma}[\cite{derdzinski}]\label{lem:Derdzinski}
    If a connected 4-manifold $(M^4,h)$ is Einstein, and $\#Spec(\mathscr{W_h}^+)\leq 2$ everywhere, then $h$ is either anti-self-dual  everywhere, or $\#Spec(\mathscr{W_h}^+)= 2$ everywhere.
\end{lemma}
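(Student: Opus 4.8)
The plan is to follow Derdzi\'nski's argument. Since $(M^4,h)$ is Einstein it is real-analytic in harmonic coordinates, so $\mathscr{W}^+_h$ and its pointwise invariants are real-analytic; set $Z:=\{p\in M:\mathscr{W}^+_h(p)=0\}$, a closed set. A nonzero trace-free symmetric endomorphism of the three-dimensional space $\Lambda^+$ cannot have a single eigenvalue, so on $M\setminus Z$ the hypothesis $\#Spec(\mathscr{W}^+_h)\le2$ already forces $\#Spec(\mathscr{W}^+_h)=2$, while $Z=M$ is exactly the anti-self-dual case. It therefore suffices to prove that $Z$ is open, for then connectedness of $M$ gives $Z\in\{\emptyset,M\}$, which is the claimed dichotomy. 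So I would fix $p\in Z$ and aim to produce a neighborhood of $p$ contained in $Z$.

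First I would reformulate the hypothesis algebraically: for a trace-free symmetric $A\in\mathrm{End}(\mathbb R^3)$ one has the sharp bound $|\det A|\le\tfrac{1}{3\sqrt6}|A|^3$, with equality exactly when the discriminant $\tfrac12|A|^6-27(\det A)^2$ of the characteristic polynomial vanishes, i.e.\ when $A$ has a repeated eigenvalue. Thus $\#Spec(\mathscr{W}^+_h)\le2$ everywhere is the pointwise identity $|\mathscr{W}^+_h|_h^2=\tfrac32\Lambda^2$, where $\Lambda:=(4\det\mathscr{W}^+_h)^{1/3}$ (real cube root) is the simple eigenvalue; $\Lambda$ is continuous on $M$, vanishes exactly on $Z$, and — combining real-analyticity with the identity $\Lambda^2=\tfrac23|\mathscr{W}^+_h|_h^2$ — is in fact real-analytic near $Z$, vanishing there to a finite order $\ell$ which one checks is $\ge2$ (the transverse case $\ell=1$ being excluded by the Bianchi identity $\delta\mathscr{W}=0$). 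On $U:=M\setminus Z$ I would invoke Derdzi\'nski's theorem: $g:=(2\sqrt6\,|\mathscr{W}^+_h|_h)^{2/3}h$ is K\"ahler, with K\"ahler form along the $\Lambda$-eigenspace of $\mathscr{W}^+_h$; one has $h=(\mathrm{const})\,s_g^{-2}g$ and $s_g=\pm2\sqrt6\,|\mathscr{W}^+_h|_h^{1/3}$ with locally constant sign. If $g$ is K\"ahler--Einstein on some connected component of $U$, then $|\mathscr{W}^+_h|_h$ is a nonzero constant there, which forces that component to be all of $M$ and hence $Z=\emptyset$; so I may assume $g$ is extremal but not K\"ahler--Einstein near $p$.

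The core of the argument is then to obtain an elliptic inequality for $f:=|\mathscr{W}^+_h|_h^{1/3}$ amenable to the strong maximum principle. A direct Bochner estimate on $u:=|\mathscr{W}^+_h|^2$ fails, since the contracted second Bianchi identity for Einstein metrics only gives $\Delta_h u=(\mathrm{const})\,s\,u+(\mathrm{const})\det\mathscr{W}^+_h+2|\nabla\mathscr{W}^+_h|^2$, and the term $|\nabla\mathscr{W}^+_h|^2$ is not controlled by $u$ near $Z$. Instead I would write $\mathscr{W}^+_h=\Lambda A'$ with $A'$ the normalized summand (eigenvalues $1,-\tfrac12,-\tfrac12$); since $\langle\nabla A',A'\rangle=0$ one has $|\nabla\mathscr{W}^+_h|^2=\tfrac32|\nabla\Lambda|^2+\Lambda^2|\nabla A'|^2$, and dividing the Bochner identity by $\Lambda$ the $|\nabla\Lambda|^2$ terms cancel, leaving the exact equation $\Delta_h\Lambda=a\,\Lambda+b\,\Lambda^2+\tfrac23\Lambda|\nabla A'|^2$ on $U$, with $a$ proportional to $s$ and $b$ constant. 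Since $A'$ records the complex structure of the K\"ahler metric $g$, I would control $\nabla^h A'$ by combining $\nabla^g(\text{K\"ahler form})=0$, the conformal relation $g=(\mathrm{const})f^2h$, the extremal equation (so that $J\nabla_g s_g$ is Killing for $g$, hence for $h$), and the conformally-Einstein equation $\mathring{\mathrm{Hess}}_g s_g=-\tfrac{s_g}{2}\mathring{\mathrm{Ric}}_g$, to the effect that $|\nabla^h A'|^2=|\nabla\Lambda|^2/\Lambda^2$ up to terms of order $\Lambda$. Substituting and passing from $\Lambda$ to $f\propto\Lambda^{1/3}$, the residual gradient contributions cancel (the bound $\ell\ge2$ making the error bounded relative to $f$, and keeping $f$ in the correct regularity class), and one is left with $\Delta_h f=q\,f$ on $U$ with $q$ bounded near $Z$. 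Since $f\ge0$ on $M$ and $f(p)=0$, the strong maximum principle then forces $f\equiv0$ in a neighborhood of $p$, i.e.\ $\mathscr{W}^+_h\equiv0$ near $p$, so $Z$ is open. The hard part will be the gradient estimate $|\nabla^h A'|^2\approx|\nabla\Lambda|^2/\Lambda^2$, equivalently $|\nabla^h\mathscr{W}^+_h|^2\approx\tfrac52|\nabla\Lambda|^2$ near $Z$: it is exactly here that the repeated-eigenvalue hypothesis enters decisively — through Derdzi\'nski's K\"ahler, extremal, and conformally-Einstein equations — to tame the derivative term that obstructs the naive Bochner argument.
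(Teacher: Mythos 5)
The paper does not prove this lemma; it is quoted verbatim from Derdzi\'nski's paper \cite{derdzinski}, so there is no in-paper argument to compare against. Judged on its own terms, your reduction (show $Z=\{\mathscr{W}^+_h=0\}$ is open, since a nonzero trace-free symmetric endomorphism of the $3$-dimensional space $\Lambda^+$ cannot have a single eigenvalue) and your derivation of the equation $\Delta_h f=qf$ on $U=M\setminus Z$ for $f=|\mathscr{W}^+_h|^{1/3}$, with the gradient terms cancelling precisely at the exponent $1/3$ because of the Bochner formula and the degenerate-spectrum/K\"ahler structure, are the right skeleton. But the final step is where the proof genuinely breaks.

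The strong maximum principle cannot be applied ``at $p\in Z$'' as you assert, because the elliptic equation $\Delta_h f=qf$ has only been established on $U$, not on a full neighborhood of $p$, and $f$ is merely continuous (typically not $C^1$) across $\partial Z$. Worse, the zero-extension of a nonnegative function vanishing on a closed set and satisfying $\Delta f=qf$ off that set is, distributionally, a \emph{subsolution} across $\partial Z$ (the singular part of $\Delta f$ supported on $\partial Z$ is a nonnegative measure, since $f\geq 0$ vanishes there), which is the wrong sign for the minimum principle. The model $f=|x_1|$ on $\mathbb{R}^n$ --- nonnegative, vanishing on $\{x_1=0\}$, harmonic off its zero set, and not identically zero --- shows that your stated hypotheses do not imply $f\equiv 0$ near a zero. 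Your auxiliary claims do not rescue this: the real-analyticity of $\Lambda$ near $Z$ does not follow from that of $\Lambda^2=\tfrac23|\mathscr{W}^+_h|^2$ (square and cube roots of real-analytic functions fail to be analytic at zeros), the exclusion of $\ell=1$ ``by the Bianchi identity'' is not justified, and even granting $\ell\geq 2$ one gets no contradiction with the Hopf boundary-point lemma on an interior tangent ball (which only forces $\Lambda$ to vanish to order $\leq 3$ radially; one would need order $>3$ to conclude). The argument must be closed by a different mechanism. The one implicit in the surrounding literature, and visible in this paper's identity $-12=\xi^2 s_g+6\xi\Delta_g\xi-12|\nabla_g\xi|_g^2$, is first order: on $U$ one has $h=s_g^{-2}g$ with $g$ K\"ahler, and the conformal transformation of scalar curvature gives $|ds_g|_h^2=s_g^2|ds_g|_g^2=s_g^2\bigl(-\tfrac{s_h}{12}+\tfrac{s_g^3}{12}+\tfrac{s_g\Delta_g s_g}{2}\bigr)\lesssim s_g^2$ near $Z$, i.e.\ $|d\log|s_g||_h\leq C$; Gronwall along $h$-geodesics then shows $s_g$ (equivalently $|\mathscr{W}^+_h|$) cannot reach zero at finite $h$-distance from $U$, so $Z$ is empty unless $U$ is. Some version of this first-order estimate, or an infinite-order-vanishing plus Aronszajn unique continuation argument, is what your write-up is missing.
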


For any K\"ahler metric $g$ with K\"ahler form $\omega$, its self-dual Weyl curvature $\mathscr{W}_g^+$ is of the form with respect to the decomposition $\Lambda^+=\mathbb{R}\omega\oplus \text{Re} \ \Lambda^{2.0}$  
\begin{equation}\label{eq:self_dual_weyl_curvature_of_a_Kahler_metric}
\mathscr{W}^+_g=\left(\begin{matrix}
        s_g/6 & & \\
        & -s_g/12 & \\
        & & -s_g/12
    \end{matrix}\right)
    \end{equation}
Thus, if an Einstein metric \( h \) is locally conformal to a Kähler metric, then \( h \) is either of Type I or Type II. Conversely, if \( h \) is of Type II, then either \( \det(\mathscr{W}^+_h) > 0 \) everywhere, or \( \det(\mathscr{W}^+_h) < 0 \) everywhere, and the conformal changed metric $g := (2\sqrt{6}|\mathscr{W}^+_h|_h)^{2/3} h$ locally is an extremal K\"ahler metric whose K\"ahler form corresponds to an eigen-2-form associated with the simple eigenvalue of \( \mathscr{W}^+_h \), as shown in \cite{derdzinski,lebrun20}. Passing to a double cover if necessary so that there is a global eigen-2-form associated to the simple eigenvalue, $(M,h)$ is globally conformally K\"ahler with $g$ being extremal K\"ahler. We will explain Type II Poincar\'e-Einstein metrics in details in Section \ref{subsec:Type II with Kahler}. However, a Type I metric is not necessarily locally conformal to a K\"ahler metric. In fact, every Type I metric that is locally conformal to a K\"ahler metric admits a continuous symmetry, as we shall explain in Section~\ref{subsec:Type I with Kahler}.

The diagram below illustrates the possible degenerations for sequences of Einstein metrics. 
\begin{center}
\begin{tikzpicture}[>=Stealth, node distance=0.8cm, font=\small]
  \node (III) {Type III};
  \node (II) [below=of III] {Type II};
  \node (I)  [right=of II] {Type I};

  \draw[->] (III) -- (II);
  \draw[->] (II) -- (I);
  \draw[->] (III) -- (I);
\end{tikzpicture}
\end{center}
For Type I PEs, by the work of Biquard-Rollin \cite{BiquardRollin2009} we know the deformation problem for conformal infinity is unobstructed, so it is easy to construct sequence of Type III PEs converging to a Type I PE.

\ 

We now focus on Poincar\'e--Einstein $4$-manifolds $(M^4,h)$ that is conformal to a K\"ahler metric $g$.  
By Lemma~\ref{lem:Derdzinski} and \eqref{eq:self_dual_weyl_curvature_of_a_Kahler_metric}, if the scalar curvature of $g$ vanishes at one point, then it vanishes identically. Hence:

\begin{lemma}
The scalar curvature $s_g$ satisfies one of the following: $s_g>0$ on $M$, $s_g<0$ on $M$, or $s_g\equiv 0$ on $M$.
\end{lemma}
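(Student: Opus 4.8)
The plan is to reduce the trichotomy to the single assertion: \emph{if $s_g$ vanishes at one point of $M$, then $s_g\equiv 0$}. Granting this, the lemma follows at once: the zero set $\{s_g=0\}$ is closed, and by the assertion it is either empty or all of $M$. If it is all of $M$ we are in the case $s_g\equiv 0$. If it is empty, then since $M$ is connected and $s_g$ is continuous, the intermediate value theorem forces $s_g$ to have a constant sign, giving either $s_g>0$ everywhere or $s_g<0$ everywhere.

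To prove the assertion I would exploit the conformal invariance of the vanishing of $\mathscr{W}^+$. Since $h$ is conformal to the K\"ahler metric $g$, formula~\eqref{eq:self_dual_weyl_curvature_of_a_Kahler_metric} shows that at any point $p$ with $s_g(p)=0$ one has $\mathscr{W}^+_g=0$ at $p$, hence $\mathscr{W}^+_h=0$ at $p$ as well, because the Weyl tensor is conformally invariant as a $(1,3)$-tensor and therefore the vanishing of its self-dual part is a pointwise conformally invariant condition. Thus $\#\mathrm{Spec}(\mathscr{W}^+_h)=1$ at $p$. On the other hand, because $h$ is Einstein and conformal to a K\"ahler metric, $\mathscr{W}^+_h$ carries a repeated eigenvalue everywhere, so $\#\mathrm{Spec}(\mathscr{W}^+_h)\le 2$ on all of $M$; Lemma~\ref{lem:Derdzinski} then forces $h$ to be anti-self-dual everywhere, since the alternative conclusion $\#\mathrm{Spec}(\mathscr{W}^+_h)=2$ everywhere is incompatible with the point $p$ where it equals $1$. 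Hence $\mathscr{W}^+_h\equiv 0$, so $\mathscr{W}^+_g\equiv 0$, and reading off \eqref{eq:self_dual_weyl_curvature_of_a_Kahler_metric} once more gives $s_g\equiv 0$.

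The only substantive input is Lemma~\ref{lem:Derdzinski}, Derdzi\'nski's global dichotomy, which is quoted above, so I do not anticipate a real obstacle here; the remaining ingredients — connectedness of $M$, continuity of $s_g$, conformal invariance of $\{\mathscr{W}^+=0\}$, and the eigenvalue pattern \eqref{eq:self_dual_weyl_curvature_of_a_Kahler_metric} of a K\"ahler metric — are all elementary or already recorded. As an alternative packaging that bypasses Lemma~\ref{lem:Derdzinski}, one could argue directly from the Type~I / Type~II classification: a Type~I metric has $\mathscr{W}^+_h\equiv 0$, hence $\mathscr{W}^+_g\equiv 0$ and $s_g\equiv 0$; a Type~II metric has $\mathscr{W}^+_h\neq 0$ everywhere, hence $\mathscr{W}^+_g\neq 0$ and $s_g\neq 0$ everywhere, so $s_g$ has constant sign by connectedness.
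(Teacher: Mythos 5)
Your proposal is correct and follows essentially the same route as the paper: the paper's (one-line) proof likewise combines Lemma~\ref{lem:Derdzinski} with the eigenvalue formula \eqref{eq:self_dual_weyl_curvature_of_a_Kahler_metric} to conclude that a single zero of $s_g$ forces $\mathscr{W}^+_h\equiv 0$ and hence $s_g\equiv 0$, with the trichotomy then following from connectedness. Your write-up simply makes explicit the conformal-invariance and sign-continuity steps that the paper leaves implicit.
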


\subsection{Type I with conformally K\"ahler geometry}
\label{subsec:Type I with Kahler}

By works of Tod and Przanowski \cite{tod95,tod06,przanowski}, we have the following local result. There is also an explanation based on twistor theory due to LeBrun. See also \cite{calderbank,joefine}.

\begin{proposition}\label{prop:Type I with conformal Kahler}
  An ASD Einstein metric $h$ with non-zero scalar curvature and a non-trivial Killing field $\mathcal{K}$ is locally conformal to a K\"ahler scalar-flat metric $g$ on an open dense set. Conversely, an ASD Einstein metric $h$ with non-zero scalar curvature that is conformally K\"ahler admits a non-trivial Killing field $\mathcal{K}$.
\end{proposition}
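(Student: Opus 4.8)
For the forward implication, let $\mathcal{K}$ be a nontrivial Killing field of the ASD Einstein metric $h$, put $F:=d\mathcal{K}^{\flat}$, and write $F^{+}:=\tfrac12(F+*_{h}F)$ for its self-dual part; the plan is to realize the K\"ahler metric as a conformal rescaling of $h$ whose K\"ahler form is a positive multiple of $F^{+}$. I would first show that $F^{+}$ is nonzero on an open dense set. Since $dF=0$ and $*_{h}^{2}=\mathrm{id}$ on $\Lambda^{2}$, we have $d^{*}_{h}(*_{h}F)=0$, hence
\[
d^{*}_{h}F^{+}=\tfrac12\,d^{*}_{h}F=\tfrac12\,d^{*}_{h}d\mathcal{K}^{\flat}=\tfrac12\,\Delta_{\mathrm{Hodge}}\mathcal{K}^{\flat}=\mathrm{Ric}_{h}(\mathcal{K},\cdot)=-3\,\mathcal{K}^{\flat},
\]
using that $\mathcal{K}$ is Killing ($d^{*}_{h}\mathcal{K}^{\flat}=0$, $\nabla^{*}\nabla\mathcal{K}^{\flat}=\mathrm{Ric}_{h}(\mathcal{K},\cdot)$) and $\mathrm{Ric}_{h}=-3h$; thus $\{F^{+}=0\}$ has empty interior (a Killing field vanishing on an open set vanishes identically), so $U:=\{F^{+}\neq0\}$ is open and dense. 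On $U$ I would look for a conformal factor $u>0$ such that, with $g:=u\,h$, the $g$-orthogonal almost complex structure $J$ associated to $F^{+}$ (its fundamental form being the multiple $\omega$ of $F^{+}$ with $|\omega|_{g}=\sqrt2$) is $\nabla^{g}$-parallel, i.e.\ such that $(U,g,J)$ is K\"ahler; granting this, $\mathscr{W}^{+}_{g}=0$ by conformal invariance of $\mathscr{W}^{+}$ (as $\mathscr{W}^{+}_{h}=0$), whence \eqref{eq:self_dual_weyl_curvature_of_a_Kahler_metric} forces $s_{g}\equiv0$ and $g$ is automatically scalar-flat K\"ahler. The equation for $u$ arises by differentiating $F$ via the Killing identity $\nabla_{a}F_{bc}=2R_{cbad}\mathcal{K}^{d}$: together with the Einstein condition, this shows $\nabla^{h}F^{+}$ is the result of applying the self-dual curvature operator $\mathscr{W}^{+}_{h}+\tfrac{s_{h}}{12}\mathrm{Id}$ to an expression algebraic in $\mathcal{K}^{\flat}$ and $h$, so that when $\mathscr{W}^{+}_{h}=0$ the operator is just multiplication by the constant $\tfrac{s_{h}}{12}$ and $\nabla^{h}F^{+}$ is completely explicit; plugging this into the conformal-change formula for the Levi-Civita connection turns the requirement $\nabla^{g}\omega=0$ into a first-order equation for $u$ along the $\mathcal{K}$-orbits, whose solvability is precisely Tod's ``continuous Toda'' reduction (yielding, in his normal form, $h=z^{-2}g$ with $z$ the conformal factor). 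Alternatively, by twistor theory: anti-self-duality gives a complex twistor space $Z^{3}$, the Einstein condition with $s_{h}\neq0$ a holomorphic contact structure on $Z$, and $\mathcal{K}$ lifts to a contact holomorphic vector field whose contact Hamiltonian $\mu$ cuts out a divisor $\{\mu=0\}\subset Z$ meeting the generic twistor line in two simple points; this produces an integrable $J$ on the open dense set where those points separate, and the scalar-flat K\"ahler representative of $[h]$ then follows from the structure theory of (anti-)self-dual Hermitian surfaces.

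For the converse, suppose $h$ is ASD Einstein with $s_{h}\neq0$ and $g=f^{2}h$ is K\"ahler on some open set, with complex structure $J$ and $f>0$. By conformal invariance of $\mathscr{W}^{+}$ and $\mathscr{W}^{+}_{h}=0$ we get $\mathscr{W}^{+}_{g}=0$, so $s_{g}\equiv0$ by \eqref{eq:self_dual_weyl_curvature_of_a_Kahler_metric}; thus $g$ is scalar-flat K\"ahler. Writing $h=e^{2\phi}g$ with $\phi=-\log f$ and using $\nabla^{g}d\phi-d\phi\otimes d\phi=-f^{-1}\nabla^{g}df$, the dimension-four conformal-change formula for the Ricci tensor, together with $\mathring{\mathrm{Ric}}_{h}=0$, gives
\[
\nabla^{g}df-\tfrac14(\Delta_{g}f)\,g=-\tfrac{f}{2}\,\mathring{\mathrm{Ric}}_{g}.
\]
The right-hand side is $J$-invariant, since the Ricci tensor of a K\"ahler metric is $J$-invariant, and $\tfrac14(\Delta_{g}f)\,g$ is $J$-invariant; hence $\nabla^{g}df$ is $J$-invariant. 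On a K\"ahler manifold $(\mathcal{L}_{J\nabla_{g}f}\,g)(X,Y)=-(\nabla^{g}df)(X,JY)-(\nabla^{g}df)(JX,Y)$, which vanishes precisely when $\nabla^{g}df$ is $J$-invariant; therefore $\mathcal{K}:=J\nabla_{g}f$ is a Killing field of $g$. Moreover $\mathcal{K}$ preserves $f$, because $df(J\nabla_{g}f)=g(\nabla_{g}f,J\nabla_{g}f)=0$, so $\mathcal{K}$ is a Killing field of $h=f^{-2}g$ as well; and since $h$ is real-analytic, $\mathcal{K}$ extends to a global Killing field of $(M,h)$. Finally $\mathcal{K}\not\equiv0$: otherwise $f$ is locally constant, $h$ is locally K\"ahler--Einstein, and an ASD K\"ahler--Einstein metric has vanishing scalar curvature by \eqref{eq:self_dual_weyl_curvature_of_a_Kahler_metric}, contradicting $s_{h}\neq0$.

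The main obstacle is the forward implication. Writing down the candidate almost complex structure from $F^{+}$ and the candidate conformal factor is routine; the substance lies in proving that the rescaled structure is genuinely K\"ahler---integrability together with $\nabla^{g}\omega=0$---and this is exactly where anti-self-duality is indispensable, since $\mathscr{W}^{+}_{h}=0$ is what collapses the self-dual curvature operator acting on $F^{+}$ to a constant multiple of the identity. That step is in effect the Tod--Przanowski derivation of the continuous-Toda normal form for ASD Einstein metrics with a symmetry; the twistor-theoretic route instead rests on the classification of (anti-)self-dual Hermitian $4$-manifolds, an input of comparable depth.
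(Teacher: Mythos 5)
Your proposal is correct, and it is worth noting where it tracks the paper and where it diverges. For the forward direction you take essentially the paper's route: normalize $(d\theta_{\mathcal K})^+$ to produce an almost complex structure, conformally rescale so that the resulting $2$-form becomes the K\"ahler form, and defer the verification of integrability and parallelism to the Tod--Przanowski/twistor machinery --- which is exactly what the paper does by citing Calderbank--Pedersen, so this is not a gap relative to the paper's own standard. You actually add something the paper glosses over: the identity $d^*_h(d\theta_{\mathcal K})^+=\mathrm{Ric}_h(\mathcal K,\cdot)=-3\theta_{\mathcal K}$ cleanly justifies that $\{(d\theta_{\mathcal K})^+\neq 0\}$ is open and dense (a Killing field vanishing on an open set vanishes identically), which is needed for the ``open dense set'' clause of the statement. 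For the converse you take a genuinely different route: instead of the paper's formula $\theta_{\mathcal K}:=\delta_h(\zeta^{-1}\omega)$, you use the trace-free conformal Ricci identity $\nabla^g df-\tfrac14(\Delta_g f)g=-\tfrac{f}{2}\mathring{\mathrm{Ric}}_g$ together with the $J$-invariance of the Ricci tensor of a K\"ahler metric to conclude that $\mathrm{Hess}_g f$ is $J$-invariant, hence $J\nabla_g f$ is Killing. This is the standard Derdzi\'nski-type argument; it is more self-contained than the paper's one-line divergence formula, it immediately exhibits the Killing field in the form $\mathcal K=J\nabla_g\zeta$ used throughout the rest of the paper, and your non-triviality argument (if $\mathcal K\equiv 0$ then $h$ would be ASD K\"ahler--Einstein, forcing $s_h=0$ via \eqref{eq:self_dual_weyl_curvature_of_a_Kahler_metric}) is correct and not spelled out in the paper. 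The two converses produce the same Killing field up to a constant, so nothing is lost.
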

\begin{proof}
  Given such an ASD Einstein metric $h$ and a Killing field $\mathcal{K}$, set $\theta_{\mathcal{K}}$ to be the dual one-form of $\mathcal{K}$. Take the self-dual part $(d\theta_{\mathcal{K}})^+$ and write $(d\theta_{\mathcal{K}})^+(\cdot,\cdot)=\zeta^{-1} g(\cdot ,J\cdot)$ with a non-negative function $\zeta$ and $J^2=-Id$,  then $J$ as an almost-complex structure is integrable, and the conformal metric $g:=\zeta^{2}h$ is K\"ahler scalar-flat with the complex structure $J$. Note that this only makes sense over regions where $(d\theta_{\mathcal{K}})^+\neq 0 $. Conversely, knowing such an ASD Einstein metric $h$ and a conformal K\"ahler metric $g=\zeta^2h$ whose K\"ahler form is $\omega$, define $\theta_\mathcal{K}:=\delta_h (\zeta^{-1}\omega)$ as the divergence of $\zeta^{-1}\omega$ under $h$, then the dual vector field $\mathcal{K}$ of $\theta_{\mathcal{K}}$ is a Killing field. See \cite{calderbank} for details.
\end{proof}

Note that the Killing field $\mathcal{K}$ and the conformally K\"ahler geometry $g=\zeta^2h$ are canonically related. Here, $\zeta$ up to scaling is the moment map of the Killing field $\mathcal{K}$ under the K\"ahler metric $g$. In particular, by scaling the Killing field $\mathcal{K}$ properly, we can always arrange $\mathcal{K}=J\nabla_g\zeta$. Therefore, an ASD Einstein metric $h$ with a conformally K\"ahler geometry $g$ can be locally written down using the LeBrun's ansatz for K\"ahler scalar-flat metric with $\mathbb{R}$-symmetry \cite{lebrun}, which is obtained by considering the K\"ahler reduction with respect to $\mathcal{K}=J\nabla_g\zeta$
\begin{equation}\label{eq:Type I metric}
  g=\zeta^2 h=W(d\zeta^2+e^v(dx^2+dy^2))+W^{-1}\eta^2,
\end{equation}
\begin{equation}\label{eq:Type I Toda}
  (e^v)_{\xi\xi}+v_{xx}+v_{yy}=0,
\end{equation}
\begin{equation}\label{eq:Type I W}
  W=1-\frac{1}{2} \zeta v_\zeta,
\end{equation}
\begin{equation}\label{eq:Type I d eta}
  d\eta=(We^v)_\zeta dxdy+W_xdyd\zeta +W_yd\zeta dx.
\end{equation}
The complex structure $J$ is given by $Jdx=dy$, $Jd \zeta=W^{-1}\eta$. Note that in LeBrun's ansatz in \cite{lebrun}, we do not have \eqref{eq:Type I W}, and instead one has $(We^v)_{\zeta\zeta} + v_{xx} + v_{yy} = 0$, which is implied by \eqref{eq:Type I Toda} and \eqref{eq:Type I W}. Only when the K\"ahler scalar-flat metric $g$ is conformal to an Einstein metric we have \eqref{eq:Type I W}, which follows from the conformal change for scalar curvature. 

The following example demonstrates that, even for the same ASD Poincar\'e–Einstein metric, different Killing fields can induce distinct conformally K\"ahler geometries.

\begin{example}[Hyperbolic 4-ball]\label{example:hyperbolic space}
Consider the standard hyperbolic metric on $B^4$
    $$h=\frac{4}{(1-r^2)^2}(dr^2+r^2g_{S^3})=\frac{4}{(1-r^2)^2}(dx_1^2+dx_2^2+dx_3^2+dx_4^2).$$ 
One can apply different Killing fields to obtain different conformally K\"ahler geometries. Set the orientation to be given by $dx_1 \wedge dx_2 \wedge dx_3 \wedge dx_4$.

We first consider $\mathcal{K} = x_1 \partial_{x_2} - x_2 \partial_{x_1}$.  Denote 
\begin{align*}
  \omega_1 &= \frac{4}{(1 - r^2)^2}(dx_1 \wedge dx_2 + dx_3 \wedge dx_4),\\
  \omega_2 &= \frac{4}{(1 - r^2)^2}(dx_1 \wedge dx_3 - dx_2 \wedge dx_4),\\
  \omega_3 &= \frac{4}{(1 - r^2)^2}(dx_1 \wedge dx_4 + dx_2 \wedge dx_3),
\end{align*}
so that $\omega_1, \omega_2, \omega_3$ form an orthogonal frame of the bundle of self-dual $2$-forms, each of norm $\sqrt{2}$. Then $\theta_{\mathcal{K}}=\frac{4}{(1-r^2)^2}\,(x_1dx_2-x_2dx_1)$, and 
$$(d\theta_{\mathcal{K}})^+=\frac{2}{1-r^2}\Bigl((x_1^2+x_2^2)\omega_1+(x_1x_4+x_2x_3)\omega_2+(x_2x_4-x_1x_3)\omega_3\Bigr)+\omega_1.$$
Hence 
$$\zeta^{-1}=|(d\theta_{\mathcal{K}})^+|_{h}=\frac{2}{1-r^2}\sqrt{\,2(x_1^2+x_2^2)+2\!\left(\tfrac{1-r^2}{2}\right)^{\!2}}.$$
The corresponding K\"ahler metric is 
$$g=\zeta^2h=\frac{1}{\,2(x_1^2+x_2^2)+2\!\left(\tfrac{1-r^2}{2}\right)^{\!2}}\,(dx_1^2+dx_2^2+dx_3^2+dx_4^2).$$
An easier way to understand the K\"ahler geometry here is to transform to the upper-half-space model $h=\frac{1}{w^2}(dx^2+dy^2+dz^2+dw^2)$ via the isometry 
$$x=\frac{2x_1}{Q},\quad y=\frac{2x_2}{Q},\quad z=\frac{2x_3}{Q},\quad w=\frac{1-r^2}{Q}, \quad \text{where}\quad Q=x_1^2+x_2^2+x_3^2+(x_4-1)^2.$$
Then the Killing field $\mathcal{K}=x\partial_y-y\partial_x$, and it is easy to check 
$g=\frac{1}{x^2+y^2+w^2}(dx^2+dy^2+dz^2+dw^2),$
which is just a Poincar\'e cusp $g_{S^2}+\tfrac{1}{r^2}(dz^2+dr^2)$ with $r^2=x^2+y^2+w^2$.

 Next, we take $\mathcal{K} = x_1\partial_{x_2} - x_2\partial_{x_1} + x_3\partial_{x_4} - x_4\partial_{x_3}$, the rotation in the Hopf fibration. A direct calculation gives
$$(d\theta_{\mathcal{K}})^+ = \frac{2}{1 - r^2}\,\omega_1.$$
Hence, $\zeta = \tfrac{1 - r^2}{2\sqrt{2}}$, and 
$g = \zeta^2 h = \tfrac{1}{2}(dr^2 + r^2 g_{S^3})$ with $ r < 1.$
This provides a flat conformal compactification of $(B^4,h)$, with conformal infinity $g|_{\{1\}\times S^3}=\tfrac{1}{2}g_{S^3}$.

More complicated examples can be obtained by considering the general rotational Killing field 
\[
\mathcal{K}=a(x_1\partial_{x_2}-x_2\partial_{x_1})+b(x_3\partial_{x_4}-x_4\partial_{x_3}).
\]
Here, $a$ and $b$ can even be taken as irrational numbers. Write $z_1=x_1+ix_2$ and $z_2=x_3+ix_4$, then
\begin{align*}
    \zeta^{-1}
    &=\frac{2}{1-r^2}\left|\Bigl(\tfrac{1}{2}(a-b)(|z_1|^2-|z_2|^2)+\tfrac{1}{2}(a+b)\Bigr)\omega_1
      +(a-b)\,\mathrm{Im}(z_1z_2)\,\omega_2-(a-b)\,\mathrm{Re}(z_1z_2)\,\omega_3\right|_h \\[4pt]
    &=\frac{2\sqrt{2}}{1-r^2}\left(\Bigl(\tfrac{1}{2}(a-b)(|z_1|^2-|z_2|^2)+\tfrac{1}{2}(a+b)\Bigr)^{2}
      +(a-b)^2|z_1z_2|^2\right)^{1/2}\\[4pt]
    &=\frac{2\sqrt{2}}{1-r^2}\left(\tfrac{1}{4}(a-b)^2(1-r^2)^2+ab(1-r^2)+a^2|z_1|^2+b^2|z_2|^2\right)^{1/2}.
\end{align*}
When neither $a$ nor $b$ is zero, the metric $g=\zeta^2h$ again provides a conformal compactification for $(B^{4},h)$:
$$g=\tfrac{1}{2}\Bigl(\tfrac{1}{4}(a-b)^2(1-r^2)^2+ab(1-r^2)+a^2|z_1|^2+b^2|z_2|^2\Bigr)^{-1}(dr^2+r^2g_{S^3}).$$
Here, $S^3$ is understood as the round sphere $\{|z_1|^2+|z_2|^2=1\}\subset\mathbb{C}^2$.

We may also consider the Killing fields
$
\mathcal{K}_i = \frac{1+r^2}{2}\partial_{x_i}-x_i \sum_{j=1}^4 x_j \partial_{x_j},\ i=1,2,3,4.
$
Let $e_i$ denote the point in $\mathbb{R}^4$ whose $i$-th coordinate is $1$ and all other coordinates are zero. For each $i$, the flow lines of $\mathcal{K}_i$ are arcs connecting $e_i$ and $-e_i$, and $\mathcal{K}_i$ is tangent to $\partial B^4$. The only two zeros of $\mathcal{K}_i$ on $\overline{B^4}$ are $\pm e_i$, and they are simple zeros.

We may also take the Killing field $\mathcal{K}$ in the ball model, which corresponds to a translation in the horizontal direction in the upper half-space model. The vector field $\mathcal{K}$ extends to $\overline{B^4}$ and is tangent to $\partial B^4$. The zero set of this vector field on $\overline{B^4}$ consists of a single point, which is a double zero.
\end{example}

\begin{remark}
In general, an ASD PE metric with a nowhere-vanishing Killing field still needs not be globally conformally K\"ahler. Consider a family of hyperbolic metrics on $\mathbb{R}^3\times S^1$
\begin{equation}\label{eq:hyperbolic metric on R3xS1}
h=d\varphi^2+\sinh^2(\varphi)g_{S^2}+\cosh^2(\varphi)\beta^2d\theta^2,
\end{equation}
with Killing field $\mathcal{K}=\partial_\theta$ and a parameter $\beta>0$, where $\theta$ denotes the angle on $\mathbb{S}^1$. One computes
$$\theta_{\mathcal K}=\beta^{2}\cosh^2(\varphi)d\theta,\quad 
d\theta_{\mathcal K}=2\beta^{2}\cosh(\varphi)\sinh(\varphi)\,d\varphi\wedge d\theta,\quad
|(d\theta_{\mathcal K})^{+}|_h
= \sqrt{2}\,\beta\,\sinh(\varphi).
$$
We have $(d\theta_{\mathcal K})^+=0$ iff $\varphi=0$, and $h$ is conformally K\"ahler via $\mathcal{K}$ only away from the circle $\varphi=0$. 
It is worth to note that the fill-in problem around \eqref{eq:hyperbolic metric on R3xS1} was extensively studied by \cite{chang2025poincareeinsteinmanifoldscylindricalconformal,chang2025problemfillingpoincareeinsteinmetric}.
\end{remark}

\subsection{Type II with conformally K\"ahler geometry}
\label{subsec:Type II with Kahler}

Consider a Type II Poincar\'e-Einstein metric $(M,h)$ that is simply-connected. Set $\lambda:=2\sqrt{6}|\mathscr{W}^+_h|_h$ and the conformal change $g:=\lambda^{2/3}h$ is an extremal K\"ahler metric.

From the conformal invariance of self-dual Weyl curvature and the fact that $\mathscr{W}_g^+$ is given by \eqref{eq:self_dual_weyl_curvature_of_a_Kahler_metric}, it is direct to conclude  $|s_g|=\lambda^{1/3}$. In particular, since in the Type II case $\lambda$ never vanishes, we have either $s_g<0$ or $s_g>0$ on $M$. As an extremal K\"ahler metric, $g$ admits a canonical Killing field $\mathcal{K}:=J\nabla_gs_g$, which also turns out to be Killing for the Einstein metric $h$, as the conformal factor $\lambda^{1/3}$ is invariant under $\mathcal{K}$. By employing the K\"ahler reduction for $g$ with respect to $\mathcal{K}$, one can locally write the conformal K\"ahler metric as 
\begin{equation}\label{eq:symplectic reduction}
  g=\zeta^2 h=W(d\zeta^2+e^v(dx^2+dy^2))+W^{-1}\eta^2.
\end{equation}
Here, $\zeta:=s_g$ is the moment map for $\mathcal{K}$, $x+iy$ is a holomorphic coordinate we pick on the K\"ahler reduction. Under \eqref{eq:symplectic reduction}, the Type II Einstein equation reduces to the following equations
\begin{equation}\label{eq:twisted toda}
  (e^v)_{\zeta\zeta}+v_{xx}+v_{yy}=-\zeta We^v,  
\end{equation}
\begin{equation}\label{eq:W}
  W=\frac{12-6\zeta v_\zeta}{12+\zeta^3},
\end{equation}
\begin{equation}\label{eq:d eta}
  d\eta=(We^v)_{\zeta}dxdy+W_xdyd\zeta+W_yd\zeta dx.
\end{equation}
Again, the complex structure $J$ is given by $Jdx=dy$, $Jd\zeta=W^{-1}\eta$. The equation \eqref{eq:twisted toda} is referred to as \emph{twisted $SU(\infty)$ Toda equation}.

\begin{example}
The AdS-Schwarzschild metric is defined as follows, which is Einstein but not ASD:
\begin{equation}\label{eq:AdS-Schwarzchild family}
    h=({1+r^2-\frac{2m}{r}})^{-1}dr^2+({1+r^2-\frac{2m}{r}})\beta^2d\theta^2+r^2g_{S^2}
\end{equation}
where $m>0,\beta>0$ are constants and $\theta$ parametrizes $\mathbb{S}^1$, with $\theta\in[0,2\pi)$. It defines a smooth metric on $\mathbb{R}^2\times S^2$ if and only if $\beta=\frac{2r_m}{1+3r_m^2}$, where $r_m>0$ is the unique real root of $1+r^2-\frac{2m}{r}=0$. Clearly, $r^{-2}h$ is K\"ahler, whose scalar curvature is $\frac{12m}{r}$, thus $h$ is Type II. Take $\zeta=\frac{(12m)^{1/3}}{r}$, one has 
    $$g:=\zeta^2h=Wd\zeta^2+W^{-1}\beta^2d\theta^2+We^{w}g_{S^2}$$
where $W=(1+\zeta^2-2m\zeta^3)^{-1}, w=\log(12m)^{2/3}+\log(1+\zeta^2-2m\zeta^3)$. The K\"ahler metric $g$ provides a conformal compactification.
\end{example}

In the Type II case, up to scaling by constants, $g$ is the only conformal change of $h$ that is K\"ahler. This is because for any other K\"ahler conformal change $g'$, the K\"ahler form $\omega'$ has to be a non-repeated eigen-two-form of $\mathscr{W}_{g'}^+$. It follows that there is a function $f$ such that $\omega'=f\omega$, where $\omega$ is the K\"ahler form for $g$. However, since both $\omega,\omega'$ are closed, $f$ has to be a constant.

\

Now, by our discussion in Section \ref{subsec:Type I with Kahler} and \ref{subsec:Type II with Kahler}, following Definition \ref{def:basic}, we introduce
\begin{definition}\label{def:PE conformally Kahler}
    We say that a Poincar\'e-Einstein manifold $(M,h)$ is conformally K\"ahler, if there exists a function $\zeta\in C^\infty(M)$ such that $g:=\zeta^2h$ is a K\"ahler metric on $M$.  
\end{definition}

Note that for conformally K\"ahler Poincar\'e-Einstein, $\zeta$ is not necessarily a defining function of $\overline{M}$, as illustrated in Example \ref{example:hyperbolic space}. As discussed above, if $h$ is of Type II, then $(M,h)$ is automatically conformally K\"ahler and $\zeta$ is a constant multiple of $|\mathscr{W}_h^+|_h^{1/3}$. If $h$ is of Type I, there might be different choice of $\zeta$, with each choice corresponding to a different Killing field $\mathcal{K}$, and $\zeta = |(d\theta_{\mathcal{K}})^+|_h^{-1}$. To sum up, for a conformally K\"ahler Poincar\'e-Einstein $(M,h)$ with the K\"ahler metric $g=\zeta^2h$, there is an associated Killing field $\mathcal{K}=J\nabla_g\zeta$ and:
\begin{itemize}
    \item if \( h \) is of Type I, then \( \mathcal{K} \) is a Killing field satisfying \( (d\theta_{\mathcal{K}})^+ \neq 0 \) on \( M \);
    \item if \( h \) is of Type II, then $g$ is extremal K\"ahler and up to scaling $\mathcal{K}$ is the extremal vector field, which is Killing.
\end{itemize}

\subsection{Regularity of conformal infinity} 

We now turn to the boundary regularity of conformally K\"ahler Poincar\'e-Einstein manifolds $(M,h)$ and investigate conditions under which $\zeta$ serves as a boundary defining function of $\overline{M}$.

Recall that on a Riemannian manifold, a vector field is called \emph{conformal Killing} if its flow preserves the conformal class of the metric. First, we show the following:

\begin{lemma}\label{lem:conformal Killing fields}
   Let $(M,h)$ be a Poincaré--Einstein 4-manifold and let $\mathcal{K}$ be a Killing field of $h$. Then $\mathcal{K}$ extends smoothly to a conformal Killing field on the geodesic compactification $(\overline{M},g')$. Moreover, $\mathcal{K}$ is tangent to the boundary and induces a conformal Killing field on $(\partial M,g')$.
\end{lemma}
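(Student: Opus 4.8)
The plan is to establish the claim in three stages: first the smooth extension of $\mathcal{K}$ as a vector field on $\overline{M}$, then the conformal Killing property, and finally tangency to $\partial M$. For the smooth extension, I would work in the geodesic compactification $g' = r^2 h$ associated to a chosen boundary representative $\hat g$. Since $\mathcal{K}$ is Killing for $h$, it preserves the conformal class $[h]$, hence preserves $[g']$ on $M$; in particular it preserves $\hat g$'s conformal class on the boundary in whatever sense survives the limit. The point is to show $\mathcal{K}$ does not blow up at $\partial M$. The clean way is to use the normal form $g' = dr^2 + g_r$ with $g_r = \hat g + r^2 g_{(2)} + r^3 g_{(3)} + \cdots$ a family of metrics on $\partial M$, and to use the fact (Fefferman--Graham, Biquard) that this expansion is determined by $h$. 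Because $\mathcal{K}$ is a $h$-isometry, its flow $\phi_t$ preserves $h$, hence carries geodesic defining functions to geodesic defining functions. Choosing $\hat g$ to be $\mathcal{K}$-invariant on $\partial M$ if possible — or more robustly, averaging — one sees $\phi_t$ preserves $r$ up to the boundary, so $\mathcal{K} r = 0$, i.e. $\mathcal{K}$ is tangent to the level sets of $r$ and in particular to $\partial M$. Then $\mathcal{K}$ restricted to each $\{r = \text{const}\}$ is a Killing field of $g_r$, a smooth family of metrics depending smoothly on $r$ including $r = 0$; standard ODE/elliptic dependence shows $\mathcal{K}$ itself extends smoothly across $r = 0$.

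An alternative and perhaps cleaner route avoids choosing $\hat g$ invariant: observe that the Lie derivative $\mathcal{L}_{\mathcal{K}} h = 0$ implies $\mathcal{L}_{\mathcal{K}} g' = \mathcal{L}_{\mathcal{K}}(r^2 h) = 2 r (\mathcal{K} r) h = 2 \frac{\mathcal{K} r}{r} g'$, so $\mathcal{K}$ is automatically conformal Killing for $g'$ on $M$ with conformal factor $2 (\mathcal{K} r)/r = 2 \mathcal{K}(\log r)$. For this to extend smoothly to $\overline{M}$ one needs $(\mathcal{K} r)/r$ to be smooth up to the boundary, equivalently $\mathcal{K} r = O(r)$ near $\partial M$. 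This vanishing is where tangency enters: one must show $\mathcal{K} r \to 0$ at $\partial M$. I would deduce this from the characterization of $r$: any two geodesic defining functions $r, \tilde r$ for the same $\hat g$ agree to high order, and $\phi_t^* r$ is a geodesic defining function for $\phi_t^* \hat g \in [\hat g]$; since conformal infinities related by the flow of an $h$-isometry are isometric, the boundary values match up to a conformal factor on $\partial M$, forcing $\mathcal{K} r|_{\partial M}$ to be a function times $r$ to leading order — concretely $\mathcal{K} r = r \cdot (\text{conformal factor of } \mathcal{K}|_{\partial M}) + O(r^2)$. Hence $\mathcal{K}(\log r)$ extends smoothly, $\mathcal{L}_{\mathcal{K}} g'$ extends smoothly as $2\,\mathcal{K}(\log r)\, g'$, and $\mathcal{K}$ is conformal Killing on $(\overline M, g')$, tangent to the boundary because $\mathcal{K} r$ vanishes there.

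For the induced conformal Killing field on $(\partial M, g')$: restricting the identity $\mathcal{L}_{\mathcal{K}} g' = 2\,\mathcal{K}(\log r)\, g'$ to $\partial M$ and using $g'|_{\partial M} = \hat g$ together with the fact that $\mathcal{K}$ is tangent to $\partial M$ (so $\mathcal{L}_{\mathcal{K}}$ commutes with pullback to $\partial M$) gives $\mathcal{L}_{\mathcal{K}|_{\partial M}} \hat g = 2 f\, \hat g$ where $f = \mathcal{K}(\log r)|_{\partial M}$, which is exactly the conformal Killing equation on the boundary. I would also remark that since changing $\hat g$ within $[\hat g]$ changes $r$ by a positive function and correspondingly changes $f$, the statement is manifestly conformally invariant, consistent with the claim being about the conformal infinity $[\hat g]$ rather than a particular representative.

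The main obstacle I expect is the boundary-regularity step: showing $\mathcal{K}$ extends \emph{smoothly} (not merely continuously or with a bounded normal derivative) across $\partial M$, and that $\mathcal{K}(\log r)$ is smooth up to the boundary rather than merely bounded. This hinges on the smoothness of the Fefferman--Graham expansion and on tracking how a one-parameter family of geodesic defining functions $\phi_t^* r$ varies smoothly in $t$ and $r$ jointly; the cleanest argument is to differentiate the defining relations $|\nabla_{g'} r|_{g'}^2 = 1$, $r|_{\partial M} = 0$ in $t$, obtaining a transport equation for $\partial_t (\phi_t^* r)|_{t=0} = \mathcal{K} r$ along the normal geodesics, and reading off from it both the $O(r)$ vanishing and the smooth dependence. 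Everything else is bookkeeping with Lie derivatives and conformal change formulas.
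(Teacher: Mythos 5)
Your starting identity is the same as the paper's: since $h=r^{-2}g'$, the Killing equation becomes $\mathcal L_{\mathcal K}g'=2r^{-1}(\mathcal K r)\,g'$, and everything hinges on showing that $b:=\mathcal K r$ vanishes like $r$ and that $\mathcal K$ itself extends smoothly. For the first point your ``transport equation'' idea is sound and, when carried out, is exactly the paper's computation: Lie-differentiating $|dr|^2_{g'}\equiv 1$ along $\mathcal K$ gives $\partial_r b=r^{-1}b$ (the $rr$-component of the conformal Killing equation in the normal form $g'=dr^2+g_r$), which integrates exactly to $b=r\,c(x)$. This is cleaner and non-circular, whereas your main stated route --- asserting that $\phi_t^*\hat g\in[\hat g]$ and that ``conformal infinities related by the flow of an $h$-isometry are isometric'' --- presupposes that the flow $\phi_t$ extends to $\overline M$, which is precisely what is being proved. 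The same circularity afflicts the averaging idea in your first route: you cannot average $\hat g$ over the flow of $\mathcal K|_{\partial M}$ before knowing $\mathcal K$ extends to $\partial M$ (and the flow need not be periodic).

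The genuine gap is the tangential component. Writing $\mathcal K=b\,\partial_r+T$ with $T$ tangent to the slices, neither of your routes establishes that $T$ extends smoothly across $r=0$: your second route only controls the scalar $\mathcal K r$, and your first route's claim that ``$\mathcal K$ restricted to each $\{r=\mathrm{const}\}$ is a Killing field of $g_r$'' is false for $r>0$ (since $b=rc\neq 0$ there, and even the tangential equation is $\mathcal L_Tg_r+2b\,II=2r^{-1}b\,g_r$, not $\mathcal L_Tg_r=0$). The missing step, which the paper supplies, is the $r\alpha$-component of the conformal Killing equation,
\begin{equation*}
\partial_r T_\alpha+\nabla^{g_r}_\alpha b-2\,{II_\alpha}^{\beta}T_\beta=0,\qquad II_r=\tfrac12\partial_r g_r ,
\end{equation*}
which, after substituting $b=rc$ and using that the Fefferman--Graham expansion of the Einstein metric has no first-order term (so $II=O(r)$ and all coefficients are smooth up to $r=0$), is a linear ODE in $r$ whose solutions automatically extend smoothly to $r=0$. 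Once that is in place, your derivation of the boundary conformal Killing equation by restricting $\mathcal L_{\mathcal K}g'=2c\,g'$ to $\partial M$ (equivalently, setting $r=0$ in $\mathcal L_Tg_r+2b\,II=2r^{-1}b\,g_r$ and using $b(0,\cdot)=0$) is correct.
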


\begin{proof}
Let $g':=dr^{2}+g_r$ be a geodesic compactification. Since $h=r^{-2}g'$, the Killing equation for $\mathcal{K}$ is equivalent to
$$
\mathcal L_{\mathcal K} g' \;=\; 2r^{-1}(\mathcal K r)\, g'. 
$$

Write
$
\mathcal K \;=\; b(r,x)\,\partial_r + T(r,x),
$
with $T$ tangent to the slices $\{r=\text{const}\}$ and $b=\mathcal K r$.  
Take any coordinate system on $\partial M$, and write $T=T^\alpha\partial_\alpha$ one computes (with $II_r:=\tfrac12\partial_r g_r$):
\begin{equation}\label{eq:ckf-b}
\partial_r b = r^{-1}b, 
\end{equation}
\begin{equation}\label{eq:ckf-T}
\partial_r T_\alpha + \nabla^{g_r}_\alpha b - 2\,{II_\alpha}^{\beta}T_\beta=0, 
\end{equation}
\begin{equation}\label{eq:ckf-L}
\mathcal L_T g_r + 2b\,II = 2r^{-1}b\,g_r. 
\end{equation}

Equation \eqref{eq:ckf-b} integrates exactly to
$$
b(r,x)=r\,c(x),
$$
for some smooth function $c(x)$ on $\partial M$. In particular $b(0,x)=0$, so $\mathcal K$ is tangent to $\partial M$.

Because $h$ is Einstein, the first order term in Fefferman--Graham expansion vanishes:
$$
g_r=\hat{g}+r^{2}g_{(2)}+\cdots,
$$
hence $II_r=\tfrac12\partial_r g_r=O(r)$. Inserting $b=r c$ and $II=O(r)$ into \eqref{eq:ckf-T} shows that $T$ extends smoothly to $r=0$. Hence, $\mathcal{K}$ extends smoothly to $\overline{M}$ and is a conformal Killing field for $g'$.
Moreover, restricting \eqref{eq:ckf-L} to boundary $\{r=0\}$ shows that $T(0,\cdot)$ is a conformal Killing field of $(\partial M,g')$.
\end{proof}

Though we will not use the following Lemma, it shows that on any non-hyperbolic Poincar\'e-Einstein manifold, the orbit of the flow of a Killing field must be contained in a compact set.

\begin{lemma}\label{lem:flow to infinity}
  If a Killing field $\mathcal{K}$ flows a point $p$ in $(M,h)$ to infinity, then $(M,h)$ is hyperbolic.
\end{lemma}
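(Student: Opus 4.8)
\textbf{Proof proposal for Lemma~\ref{lem:flow to infinity}.}

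The plan is to argue by contradiction using the structure theory of Poincar\'e--Einstein manifolds together with the behavior of Killing fields near the conformal infinity. Suppose $(M,h)$ is not hyperbolic, and suppose the flow $\phi_t$ of $\mathcal K$ carries some point $p$ to infinity, meaning that $\phi_{t_j}(p)$ leaves every compact subset of $M$ for some sequence $t_j\to+\infty$. Since $\mathcal K$ is Killing, $\phi_t$ is an isometry of $h$ for every $t$; in particular $\dist_h\!\big(\phi_t(p),\phi_t(q)\big)=\dist_h(p,q)$ for all $p,q$ and all $t$. The first step is to invoke Lemma~\ref{lem:conformal Killing fields}: $\mathcal K$ extends smoothly to the geodesic compactification $(\overline M,g')$ as a conformal Killing field tangent to $\partial M$, so the flow $\phi_t$ extends to a one-parameter family of diffeomorphisms of the compact manifold $\overline M$ preserving $\partial M$. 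Consequently $\{\phi_{t_j}(p)\}$ has a subsequence converging in $\overline M$ to some boundary point $p_\infty\in\partial M$.

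Next I would pass to a normalized picture near $p_\infty$. Working in the geodesic compactification with defining function $r$, one has $h=r^{-2}g'$ with $g'=dr^2+\hat g+r^2 g_{(2)}+\cdots$. For the sequence $q_j:=\phi_{t_j}(p)$ with $r(q_j)\to 0$, apply the isometries $\phi_{-t_j}$ and a suitable rescaling; because the curvature of $h$ converges uniformly to $-1$ near infinity (as noted right after Definition~\ref{def:basic}), a standard pointed-convergence argument shows that $(M,h,q_j)$ converges in the pointed $C^\infty$ Cheeger--Gromov sense to a complete limit $(M_\infty,h_\infty,q_\infty)$ which is Einstein with $\operatorname{Ric}=-3h_\infty$ and has constant sectional curvature $-1$, i.e.\ $M_\infty$ is hyperbolic space $\mathbb H^4$ (or a quotient, but completeness and the local model near a smooth conformal boundary force $\mathbb H^4$). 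The point is that the limit is forced to be the maximally symmetric model precisely because we are rescaling around points that escape to the conformally flat, curvature$\,\to -1$ region.

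The crucial step is then to promote this local/asymptotic statement to a global one. The injectivity radius of $(M,h)$ at $q_j$ stays bounded below along the sequence, and since $\phi_{-t_j}$ is an isometry taking $q_j$ to $p$, the injectivity radius at the \emph{fixed} point $p$ is at least that same bound; running this for all $t_j$ shows $(M,h)$ has injectivity radius bounded below on the orbit closure and, combined with the isometry group being large, that the asymptotic geometry seen from $p$ through the $\phi_{t_j}$ is all of $\mathbb H^4$. More precisely, I would argue that the existence of the escaping orbit forces the isometry group $\mathrm{Isom}(M,h)$ to act with noncompact orbit closure, and then use the fact that a Poincar\'e--Einstein metric whose isometry group moves a point out to the conformal infinity must agree, on larger and larger balls around $p$, with the hyperbolic model; since $p$ was arbitrary in its orbit and the orbit is dense in an escaping family, a unique-continuation / real-analyticity argument for Einstein metrics (Biquard's boundary unique continuation, or interior analyticity of Einstein metrics) upgrades ``locally hyperbolic near infinity along the orbit'' to ``globally hyperbolic,'' i.e.\ $(M,h)$ has constant curvature $-1$ everywhere, hence is hyperbolic. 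This contradicts the assumption unless $(M,h)$ was hyperbolic to begin with.

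The main obstacle I anticipate is making the last paragraph rigorous: controlling the pointed limit at the escaping points and, above all, propagating the hyperbolicity from a neighborhood of infinity back into the interior. The clean way is to combine (i) the uniform curvature bound $\mathrm{sec}_h\to -1$ near $\partial M$ to get that the rescaled limit is exactly $\mathbb H^4$, (ii) the isometry $\phi_{-t_j}$ to transport arbitrarily large hyperbolic balls back to fixed compact regions containing $p$, and (iii) real-analyticity of Einstein metrics together with the identity theorem to conclude $h$ is hyperbolic globally; alternatively one can phrase (iii) as: the set where $h$ has constant curvature $-1$ is nonempty, open, and closed. One must be a little careful that the escaping orbit really does sample the full sphere at infinity direction (so the limit balls exhaust $\mathbb H^4$ rather than a proper convex subset), which is where tangency of $\mathcal K$ to $\partial M$ and the conformal Killing equation~\eqref{eq:ckf-L} on the boundary get used to rule out degenerate possibilities.
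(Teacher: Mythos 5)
Your proposal contains the correct mechanism, but it is buried under machinery that is both unnecessary and, in places, not justified. The paper's proof is three lines: since $\phi_t$ is an isometry of $h$, the quantity $\sup_{B_N(p)}|\mathscr W|$ equals $\sup_{B_N(\phi_t(p))}|\mathscr W|$ for every $t$; because $\phi_t(p)$ escapes to infinity and $h$ is asymptotically hyperbolic, the latter tends to $0$ as $t\to\infty$; hence $\mathscr W\equiv 0$ on $B_N(p)$ for every $N$, so $h$ is Einstein and conformally flat with $\operatorname{Ric}=-3h$, i.e.\ hyperbolic. This is exactly your step (ii) --- transporting a fixed ball around $p$ out to infinity by the isometry --- made quantitative: you do not need the limit space at all, only that an isometry-invariant curvature quantity is constant along the orbit and decays at infinity. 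Everything else in your write-up is superfluous or shaky: the pointed Cheeger--Gromov limit requires an injectivity radius lower bound at the $q_j$ that you acknowledge but never establish, and the claim that the limit is $\mathbb H^4$ rather than a quotient is asserted, not proved; the unique-continuation/real-analyticity step is not needed, since once $B_N(p)$ has constant curvature $-1$ for every $N$ you already cover all of $M$ (every point is at finite distance from $p$); and the worry about the orbit "sampling the full sphere at infinity" is a red herring, since the argument never uses where at infinity the orbit goes, only that it leaves every compact set. I would also note that Lemma~\ref{lem:conformal Killing fields} is not needed here. If you strip your argument down to the isometry-invariance of $\sup_{B_N(p)}|\mathscr W|$ (or of $\sup_{B_N(p)}|\mathrm{sec}_h+1|$) plus its decay at infinity, you recover the paper's proof; as written, the extra steps introduce gaps rather than closing them.
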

\begin{proof}
 Let $ \phi_t $ denote the flow generated by the vector field $ \mathcal{K} $. For any fixed metric ball $ B_N(p) $, we have
$
\sup_{B_N(p)} |\mathscr{W}| = \sup_{\phi_t(B_N(p))} |\mathscr{W}| = \sup_{B_N(\phi_t(p))} |\mathscr{W}|.
$
As $ t \to \infty $, the last quantity tends to zero because $h$ is asymptotically hyperbolic. This implies that $ \mathscr{W} \equiv 0 $.
\end{proof}

\begin{remark}
In the hyperbolic $4$-ball, certain Killing fields flow points to infinity, as illustrated in Example~\ref{example:hyperbolic space}. By contrast, if a Poincar\'e--Einstein $4$-manifold is not hyperbolic, then the closure of flow orbits of a Killing field is contained in a compact set and must form a compact connected abelian Lie group, hence a torus.
\end{remark}

 Next we discuss the Type I case and Type II case separately. We will demonstrate that under a suitable assumption, the conformal K\"ahler metric $g$ provides a natural conformal compactification.

\subsubsection{Type I} 
\begin{proposition}
  In the Type I case, the conformal K\"ahler metric $g$ provides a conformal compactification for the Poincar\'e-Einstein $(M,h)$, if and only if $\mathcal{K}$ has no zeros on $\partial M$.
\end{proposition}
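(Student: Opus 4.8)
The plan is to analyze the asymptotics of the conformal factor $\zeta$ near $\partial M$ and relate its boundary behavior to the vanishing locus of $\mathcal{K}$. Recall from Proposition~\ref{prop:Type I with conformal Kahler} that $\zeta^{-1}=|(d\theta_{\mathcal{K}})^+|_h$, so $g=\zeta^2h$ extends to a conformal compactification of $(M,h)$ precisely when $\zeta$ extends smoothly to $\overline{M}$ as a nonvanishing function, which (after comparing with a geodesic defining function $r$, where $h=r^{-2}g'$) is equivalent to the statement that $\zeta/r$ extends to a smooth positive function on $\overline{M}$; equivalently, $\zeta^{-1}=|(d\theta_{\mathcal{K}})^+|_h$ should blow up like $r^{-1}$ at the boundary, with no worse and no better rate, uniformly along $\partial M$. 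So the task reduces to computing $\lim_{r\to 0} r\,|(d\theta_{\mathcal{K}})^+|_h$ and showing it is positive and finite exactly when $\mathcal{K}|_{\partial M}$ is nowhere zero.

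First I would set up the boundary expansion. By Lemma~\ref{lem:conformal Killing fields}, $\mathcal{K}$ extends smoothly to $\overline{M}$, is tangent to $\partial M$, and in a geodesic compactification $g'=dr^2+g_r$ with $g_r=\hat g+r^2g_{(2)}+\cdots$ we have $\mathcal{K}=r\,c(x)\,\partial_r+T(r,x)$ with $T|_{r=0}=T_0$ a conformal Killing field of $(\partial M,\hat g)$. The dual one-form with respect to $h=r^{-2}g'$ is $\theta_{\mathcal{K}}=r^{-2}(r c\,dr+T^\flat_{g_r})=r^{-1}c\,dr+r^{-2}T^\flat$; differentiating and extracting the leading terms, $d\theta_{\mathcal{K}}$ has a component of order $r^{-2}$ coming from $d(r^{-2}T_0^\flat)$. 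Taking the self-dual part with respect to $h$, and using that the pointwise $h$-norm of a $2$-form scales as $r^2$ times the $g'$-norm (since $|\alpha|_h^2=r^4|\alpha|_{g'}^2$ for a $2$-form), one finds $|(d\theta_{\mathcal{K}})^+|_h = r^{-1}\,\big(|(d T_0^\flat)^+|_{\hat g}+o(1)\big)$ up to a universal constant, where $(dT_0^\flat)^+$ is computed on the boundary with an appropriately oriented metric. Hence $\zeta=r\cdot\big(|(dT_0^\flat)^+|_{\hat g}\big)^{-1}(1+o(1))$, and the question becomes whether $|(dT_0^\flat)^+|_{\hat g}$ is bounded away from zero on $\partial M$.

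The key point is then a pointwise linear-algebra fact on the $3$-manifold $\partial M$: for the restriction $T_0$ of $\mathcal{K}$, the self-dual part of $dT_0^\flat$ (in the $4$-dimensional sense, using $dr$ to complete the frame) vanishes at a boundary point $p$ if and only if $T_0(p)=0$. One direction is clear: if $T_0(p)=0$ then $\theta_{\mathcal{K}}$ and hence $(d\theta_{\mathcal{K}})^+$ degenerate at $p$. For the converse, one uses that $\mathcal{K}$ is a genuine (non-conformal) Killing field of the complete Einstein metric $h$: were $(d\theta_{\mathcal{K}})^+$ to vanish at an interior-approaching sequence limiting to $p\in\partial M$ with $T_0(p)\neq 0$, the structure equations for a Killing field on an Einstein manifold (the Weitzenböck identity $\nabla^*\nabla\mathcal{K}=\mathrm{Ric}(\mathcal{K})=-3\mathcal{K}$, equivalently the fact that $\mathcal{K}$ and $d\theta_{\mathcal{K}}$ together satisfy a first-order ODE along geodesics) force a contradiction with $\mathcal{K}$ being nonzero and bounded there — concretely, $|\mathcal{K}|_h^2\to 0$ as $r\to0$ while the pair $(\mathcal{K},\nabla\mathcal{K})$ cannot vanish to infinite order. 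I expect the main obstacle to be making this converse rigorous and uniform: one must rule out the possibility that $|(dT_0^\flat)^+|_{\hat g}$ dips to zero somewhere on $\partial M$ even though $T_0$ is nonvanishing there, and also confirm that the $o(1)$ error terms in the expansion of $\zeta/r$ are genuinely uniform on $\partial M$ so that a positive infimum of $|(dT_0^\flat)^+|_{\hat g}$ upgrades to a smooth nonvanishing limit for $\zeta/r$. This is where I would invoke the Type~I (ASD) hypothesis together with the explicit local form \eqref{eq:Type I metric}–\eqref{eq:Type I d eta}: in the LeBrun ansatz, $\zeta$ is (a constant multiple of) the moment map and $W=1-\tfrac12\zeta v_\zeta$, and the collar structure near $\partial M$ where $\mathcal{K}\neq0$ is exactly the region where $W>0$ and $e^v$, $\zeta$ behave regularly, from which the boundary expansion of $\zeta$ and the nonvanishing of $\zeta/r$ can be read off directly, while near a zero of $\mathcal{K}$ the reduction degenerates and $\zeta$ fails to be a defining function.
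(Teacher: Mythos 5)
Your overall strategy is the same as the paper's: expand $\zeta^{-1}=|(d\theta_{\mathcal K})^+|_h$ in a geodesic compactification and decide whether $\zeta/r$ has a smooth positive limit on $\overline{M}$. However, your execution contains a decisive error in identifying the leading term. Writing $\theta_{\mathcal K}=r^{-1}c\,dr+r^{-2}g_r(T,\cdot)$, the most singular term of $d\theta_{\mathcal K}$ is the $O(r^{-3})$ term $-2r^{-3}\,dr\wedge \hat g(T_0,\cdot)$ coming from differentiating the factor $r^{-2}$, not the $O(r^{-2})$ term $r^{-2}\,dT_0^\flat$. Since $|\alpha|_h=r^2|\alpha|_{g'}$ for a $2$-form, this gives
$$
|(d\theta_{\mathcal K})^+|_h=2r^{-1}\bigl|(dr\wedge \hat g(T_0,\cdot))^+\bigr|_{g'}+O(1)=\sqrt{2}\,r^{-1}|T_0|_{\hat g}+O(1),
$$
so the leading coefficient of $r\,\zeta^{-1}$ is (a multiple of) $|\mathcal K|_{\hat g}$ on $\partial M$, not $|(dT_0^\flat)^+|_{\hat g}$ as you claim; the term you isolated is subleading. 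The relevant pointwise fact is therefore elementary: for a decomposable $2$-form $\beta=dr\wedge\alpha$ one has $|\beta^+|^2=\tfrac12|\beta|^2$, so $(dr\wedge\alpha)^+\neq 0$ exactly when $\alpha\neq 0$. Hence $\zeta/r$ extends to a positive function precisely when $T_0=\mathcal K|_{\partial M}$ is nowhere zero, and both directions of the proposition follow at once (if $\mathcal K(p)=0$ then $\zeta/r\to\infty$ along the normal from $p$, so $\zeta$ is not a defining function).

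By contrast, the ``linear-algebra fact'' you actually need for your version --- that $(dT_0^\flat)^+$ vanishes at $p$ if and only if $T_0(p)=0$ --- is false in both directions (a Killing field can vanish at a point where $dT_0^\flat\neq 0$, and can be nonzero where $dT_0^\flat$ vanishes or is anti-self-dual), which is why you were driven to the speculative Weitzenb\"ock/ODE patch and the appeal to the LeBrun ansatz. Once the leading term is corrected, none of that machinery is needed; the paper's proof is exactly the corrected computation above.
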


\begin{proof}
Following the proof of Theorem~\ref{prop:Type I with conformal Kahler}, we determine the conformal factor via the formula \( \zeta = |(d\theta_{\mathcal{K}})^+|_h^{-1} \). We begin with a geodesic conformal compactification. By the Fefferman--Graham expansion 
\[
h = \frac{1}{r^2}(dr^2 + g_{(0)} + r^2 g_{(2)} + r^3 g_{(3)} + \ldots),
\]
we obtain
\begin{align*}
\zeta &= \left|\left(d\left(\frac{1}{r^2}\left(\mathcal (\mathcal Kr)dr+g_{(0)}(\mathcal{K}, \cdot) + r^2 g_{(2)}(\mathcal{K}, \cdot) + \ldots \right)\right)\right)^+\right|_h^{-1} \\
&= \left| -\frac{2}{r^3}(dr \wedge \theta_{(0),\mathcal{K}})^+ + \frac{1}{r^2}(d\theta_{(0),\mathcal{K}}+d(\mathcal{K}r)\wedge dr)^+ + (d\theta_{(2),\mathcal{K}})^+ + \ldots \right|_h^{-1} \\
&= r\left| -2(dr \wedge \theta_{(0),\mathcal{K}})^+ + r(d\theta_{(0),\mathcal{K}}+d(\mathcal{K}r)\wedge dr)^+ + r^3(d\theta_{(2),\mathcal{K}})^+ + \ldots \right|_{r^2 h}^{-1} \\
:&=rf
\end{align*}
where \( \theta_{(j),\mathcal{K}} := g_{(j)}(\mathcal{K}, \cdot) \) are smooth 1-forms near \( \partial{M} \). Note that at any point \( p \in \partial M \), 
$
\mathcal{K} \neq 0 \ \Longleftrightarrow\ \theta_{(0),\mathcal{K}} \neq 0 \ \Longleftrightarrow\ (dr \wedge \theta_{(0),\mathcal{K}})^+ \neq 0.
$
If $\mathcal{K}$ has no zeros on $\partial M$, clearly, $f$ is a smooth positive function on $\overline{M}$, thus \( \zeta \) is a smooth defining function.

Conversely, if \( \mathcal{K}(p) = 0 \) for some \( p \in \partial M \), then $f^{-1}=O(r)$ along the normal direction from $p$, so $\zeta$ is not a defining function.  
\end{proof}

Note that it is possible to have a conformally K\"ahler Type I Poincar\'e-Einstein, where the conformal K\"ahler metric \( g \) does not yield a conformal compactification as illustrated in Example \ref{example:hyperbolic space}.

\subsubsection{Type II}

We have

\begin{proposition}
  In the Type II case, the conformal K\"ahler metric $g$ always provides a conformal compactification for the Poincar\'e-Einstein $(M,h)$.
\end{proposition}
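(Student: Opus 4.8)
The plan is to show that in the Type~II case the conformal factor $\zeta$, which is a constant multiple of $|\mathscr{W}^+_h|_h^{1/3}$, is always a smooth boundary defining function for $\overline{M}$. The starting point is that $|\mathscr{W}^+_h|_h$ is conformally invariant as a section-norm only up to the conformal weight; more precisely, if $g'=r^2 h$ is a geodesic compactification, then $\mathscr{W}^+_{g'}=\mathscr{W}^+_h$ as $(0,4)$-tensors under the usual identification, and hence $|\mathscr{W}^+_h|_h = r^{-2}|\mathscr{W}^+_h|_{g'}$ by the scaling of the norm on $\Lambda^+\otimes\Lambda^+$. Since $g'$ extends smoothly to $\overline M$ and $\mathscr{W}^+_{g'}$ is the self-dual Weyl curvature of a smooth metric on $\overline M$, the function $|\mathscr{W}^+_h|_{g'}^{1/3}$ extends smoothly to $\overline M$. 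Therefore $\zeta = c\,|\mathscr{W}^+_h|_h^{1/3} = c\, r^{-2/3}|\mathscr{W}^+_h|_{g'}^{1/3}$, which is $r^{-2/3}$ times a smooth function, and the issue reduces to controlling the boundary behavior of $|\mathscr{W}^+_{g'}|_{g'}$ at $r=0$.

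Next I would compute the leading boundary asymptotics of $|\mathscr{W}^+_{g'}|_{g'}$. Using the Fefferman--Graham expansion $g' = dr^2 + \hat g + r^2 g_{(2)} + r^3 g_{(3)} + \cdots$ for an Einstein $h$ (note the vanishing of the $r^1$ term), one checks that the self-dual Weyl curvature of $g'$ at the boundary is governed by the $r^2$-jet, and in fact a direct curvature computation shows $\mathscr{W}^+_{g'}|_{r=0}$ is nonzero whenever $h$ is not asymptotically hyperbolic of constant curvature. The key point is that the Type~II hypothesis forces $\mathscr{W}^+_h \neq 0$ \emph{everywhere on $M$}, and by continuity $\mathscr{W}^+_{g'}$ cannot vanish on the closure either — more carefully, since $h$ is globally Type~II, $\det(\mathscr{W}^+_h)$ has a fixed sign on $M$, and one shows this sign persists to the boundary so that $\mathscr{W}^+_{g'}$ remains nondegenerate at $r=0$. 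Hence $|\mathscr{W}^+_{g'}|_{g'}$ is a smooth positive function on all of $\overline M$, bounded away from zero near $\partial M$.

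It follows that $\zeta = c\, r^{-2/3} u$ with $u := |\mathscr{W}^+_h|_{g'}^{1/3}$ smooth and positive on $\overline M$. This is not yet a defining function, since $r^{-2/3}\to\infty$; rather, $\zeta^{-1} = c^{-1} r^{2/3} u^{-1}$, and $r^{2/3}$ does not vanish to first order with nonvanishing differential. The correct reconciliation is that the \emph{geometric} defining function adapted to the Kähler metric $g$ is not $r$ but a reparametrization: since $g = \zeta^2 h = (\zeta r)^2 g'/r^2 \cdot r^2$... I will instead argue directly that $g=\zeta^2 h$ extends smoothly to $\overline M$ and that $d\zeta^{-1}\neq 0$ on $\partial M$. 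From $h = r^{-2} g'$ we get $g = \zeta^2 r^{-2} g' = c^2 u^2 r^{-4/3} \cdot r^{-2} \cdot \dots$ — to handle this cleanly I would set $\rho := \zeta^{-1}$ and observe $g = \rho^{-2} h$, so $g$ extends smoothly iff $\rho$ is a smooth defining function for the smooth structure in which $g$ is smooth; the substance is that $\rho^2 h = g$ is the smooth extremal Kähler metric already defined on the compact $\overline M$ by Derdziński's construction, hence automatically extends, and $|d\rho|_g = 1$ on $\partial M$ follows from the general asymptotically-hyperbolic identity since $h$ is complete Einstein with $\mathrm{Ric}=-3h$. The only thing to rule out is $\rho$ having a critical point on $\partial M$, equivalently $d\zeta=0$ somewhere on the boundary, and this is exactly excluded by the nonvanishing of $\mathscr{W}^+_{g'}$ together with the nondegeneracy of its eigenvalue structure established above; one also invokes, as noted earlier in the paper, that in the Type~II case $\mathcal K = J\nabla_g \zeta$ is nowhere vanishing on the boundary, so $\nabla_g\zeta \neq 0$ there.

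The main obstacle I anticipate is the last point: showing rigorously that $\zeta$ has no critical points on $\partial M$, i.e. that the Morse--Bott critical set of the moment map stays in the interior. The cleanest route is to combine (i) the boundary nondegeneracy of $\mathscr{W}^+_{g'}$, which gives $\zeta\to$ a positive constant times a nonvanishing smooth function after the $r^{-2/3}$ rescaling, with (ii) the structural fact, recorded in Section~\ref{subsec:Type II with Kahler}, that the Killing field $\mathcal K = J\nabla_g\zeta$ of the Type~II metric is tangent to $\partial M$ and nowhere zero on $\partial M$ — the latter because a zero of $\mathcal K$ on $\partial M$ would, via Lemma~\ref{lem:conformal Killing fields} and the local normal form, force $\mathscr{W}^+_h$ to degenerate there, contradicting Type~II. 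Given that, $\nabla_g\zeta\neq 0$ on $\partial M$, so $\zeta^{-1}$ is a genuine defining function and $g=\zeta^2 h$ is a conformal compactification.
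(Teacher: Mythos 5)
There is a genuine gap, and it begins with a sign error in the conformal weight that derails the whole argument. For $g'=r^2h$ one has $|\mathscr{W}^+_h|_h=r^{2}|\mathscr{W}^+_{g'}|_{g'}$ (not $r^{-2}$): the $(0,4)$ Weyl tensor scales by $r^2$ while the norm of a $(0,4)$-tensor scales by $r^{-4}$. Consequently $\zeta$ is comparable to $r^{2/3}|\mathscr{W}^+_{g'}|_{g'}^{1/3}$ and \emph{tends to zero} at $\partial M$ — it is a candidate defining function, exactly as in the AdS--Schwarzschild example where $\zeta=(12m)^{1/3}/r\to 0$. Your conclusion that $\zeta\sim r^{-2/3}\to\infty$, and the ensuing detour through $\rho:=\zeta^{-1}$ and $g=\rho^{-2}h$, has the boundary behavior backwards. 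The patch you offer there — that $g$ "is the smooth extremal K\"ahler metric already defined on the compact $\overline M$ by Derdzi\'nski's construction, hence automatically extends" — is circular: Derdzi\'nski only produces a K\"ahler metric on the open manifold $M$, and whether it extends smoothly to $\overline M$ is precisely what the proposition asserts.

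The second, independent error is the claim that $\mathscr{W}^+_{g'}$ is nonvanishing (indeed "bounded away from zero") at $r=0$. The opposite is true and is the key input: for a geodesic compactification of a PE $4$-manifold one has $\mathscr{W}^+_{g'}=r\,W_1^++O(r^2)$ (Proposition 6.4 of Gursky--McKeown--Tyrrell), so $\mathscr{W}^+_{g'}$ vanishes to first order at the boundary; the Type II hypothesis only controls $\mathscr{W}^+_h$ on the open manifold $M$ and gives no boundary nondegeneracy. This expansion yields the upper bound $\zeta\le Cr$. What is then still missing — and absent from your proposal — is the matching lower bound $\zeta\ge C^{-1}r$, which is what guarantees $d\zeta\neq0$ on $\partial M$. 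The paper obtains it from Lemma~\ref{lem:conformal Killing fields}: the Killing field $\mathcal K$ extends smoothly to $\overline M$, so $|\nabla_{g'}\zeta^{-1}|_{g'}\le C/r^{2}$, which integrates to $\zeta^{-1}\le C/r$. Your substitute — that $\mathcal K$ is nowhere zero on $\partial M$ — cannot be assumed here, since in the paper that statement is recorded as a \emph{consequence} of this proposition; and the suggestion that a boundary zero of $\mathcal K$ would "force $\mathscr{W}^+_h$ to degenerate there" is unsubstantiated ($\partial M$ is not in $M$, where the Type II condition is imposed). As written, the proposal does not establish either that $\zeta$ vanishes at $\partial M$ or that it vanishes exactly to first order.
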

\begin{proof}
  Pick a geodesic compactification $h=\frac{1}{r^2}(dr^2+g_r)$ and denote $g'=dr^2+g_r$. By some computations, there is an expansion for $\mathscr{W}^+_{g'}$ under $g'$ (see Proposition 6.4 in \cite{gurskyselfdual}, for example): $\mathscr{W}^+_{g'}=r\cdot W_1^++r^2\cdot W_2^++O(r^3)$, where $W_j^+$ are smooth tensors defined on $\partial M$. Hence, $U:=|\mathscr{W}_{g'}^+|_{g'}^2/r^2$ is a smooth function on $\overline{M}$, positive on $M$. It follows that
  $$\lambda^{1/3}=(2\sqrt{6}|\mathscr{W}^+_h|_h)^{1/3}=(2\sqrt{6}|\mathscr{W}^+_{g'}|_{g'}\cdot r^2)^{1/3}=(2\sqrt{6})^{1/3}U^{1/6}r\leq Cr.$$

  On the other hand, since $\mathcal{K}$ is smooth on $\overline{M}$ and $J\mathcal{K}=\pm\nabla _h \lambda^{-1/3}=\pm r^2\nabla_{g'}\lambda^{-1/3}$, it follows that $|\nabla_{g'}\lambda^{-1/3}|_{g'}\leq C/r^2$. Thus, near $\partial M$, we have
  $$\lambda^{-1/3}\leq C/r.$$
  
  Hence, $C^{-1}r\leq \lambda^{1/3}\leq Cr$ near $\partial M$, so $U$ is a positive on $\overline{M}$ and $\lambda^{1/3}$ is a smooth defining function. Thus $(M,g)$ is a conformal compactification.
\end{proof}

\begin{remark}
    The proof implies that $\mathcal{K}$ is nowhere-vanishing on $\partial M$.
\end{remark}

\

Now summarizing the above, we can conclude this subsection as follows. For a Poincar\'e-Einstein $(M,h)$ and a conformal change $g=\zeta^2h$ that is K\"ahler, there is the associated Killing field $\mathcal{K}=J\nabla_g\zeta$.
\begin{definition}\label{def:regular conformally Kahler}
  A conformally K\"ahler Poincar\'e-Einstein $(M,h)$ with its associated K\"ahler metric $g=\zeta^2h$ is called \emph{regular}, if the associated Killing field $\mathcal{K}$ is tangent to $\partial M$ and induces a free $\bS^1$-action on $\partial M$.
\end{definition}
Note that a priori there is no canonical choice of $g$ since there still is the ambiguity of scaling by constants, but later in Section \ref{subsec:canonical conformal change} we will specify a canonical one.
\begin{proposition}
  For a regular conformally K\"ahler Poincar\'e-Einstein $(M,h)$ and an associated conformal K\"ahler metric $g$, the K\"ahler metric $g$ provides a natural smooth conformal compactification. 
\end{proposition}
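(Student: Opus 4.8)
The plan is to combine the two cases already analyzed—Type I and Type II—into a single statement, observing that regularity rules out precisely the pathologies that obstruct $g$ from being a compactification. First I would recall that by Definition~\ref{def:regular conformally Kahler}, the associated Killing field $\mathcal{K}$ is tangent to $\partial M$ and integrates to a \emph{free} $\mathbb{S}^1$-action on $\partial M$; in particular $\mathcal{K}$ has no zeros on $\partial M$. In the Type~II case, the preceding proposition already gives the conclusion unconditionally, so there is nothing to add beyond noting that the remark after it is consistent with regularity (indeed $\mathcal{K}$ is automatically nowhere vanishing on $\partial M$). In the Type~I case, the proposition in Section~\ref{subsec:Type I with Kahler}\ (``the conformal Kähler metric $g$ provides a conformal compactification $\iff$ $\mathcal{K}$ has no zeros on $\partial M$'') applies directly: since a free $\mathbb{S}^1$-action has no fixed points, $\mathcal{K}$ is nowhere zero on $\partial M$, so $\zeta = |(d\theta_{\mathcal K})^+|_h^{-1}$ is a smooth defining function and $g = \zeta^2 h$ extends smoothly to $\overline{M}$.

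The remaining point is smoothness of $g$ up to and including the boundary, and that $[g|_{\partial M}]$ is a genuine conformal infinity in the sense of Definition~\ref{def:basic}. For this I would verify that $\zeta$ (equivalently $\lambda^{1/3}$ in the Type~II case, or $|(d\theta_{\mathcal K})^+|_h^{-1}$ in Type~I) not only vanishes simply at $\partial M$ but has nonvanishing differential there. This is exactly what was extracted in the two case-by-case proofs: in Type~I one writes $\zeta = r f$ with $f$ smooth and positive on $\overline{M}$ (using $\mathcal{K}\neq 0 \Leftrightarrow (dr\wedge\theta_{(0),\mathcal K})^+\neq 0$ on $\partial M$), and in Type~II one obtains the two-sided bound $C^{-1}r \le \lambda^{1/3}\le Cr$ together with smoothness of $U = |\mathscr{W}^+_{g'}|^2_{g'}/r^2$. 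In both cases $d\rho \neq 0$ on $\partial M$ for $\rho = \zeta$, so $(\overline{M}, g)$ is a conformal compactification.

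Thus the proof is essentially a bookkeeping argument: the hypothesis of regularity is engineered precisely so that, whether $h$ is of Type~I or Type~II (the only two possibilities for a conformally Kähler Einstein metric by Derdziński's theorem and Lemma~\ref{lem:Derdzinski}), the associated $\mathcal{K}$ is nowhere vanishing on $\partial M$, which is the exact condition needed for $g$ to compactify $h$. I do not expect any genuine obstacle here; the only mild subtlety is making sure the Type~I argument is invoked with the correct $\zeta$, namely the canonical one $\zeta = |(d\theta_{\mathcal K})^+|_h^{-1}$ tied to $\mathcal{K}$, since for Type~I metrics different choices of Killing field yield different conformal factors and only the one matching the given $\mathcal{K}$ is controlled by the freeness of the $\mathbb{S}^1$-action. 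Once that bookkeeping is in place, smoothness of $g$ on $\overline{M}$ follows from the expansions already established, and the proposition is immediate.
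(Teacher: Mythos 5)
Your proposal is correct and follows essentially the same route as the paper, which states this proposition as a direct summary of the two preceding case-by-case results: regularity forces the $\mathbb{S}^1$-action on $\partial M$ to be free, hence $\mathcal{K}$ is nowhere vanishing there, which is exactly the hypothesis needed in the Type~I proposition, while the Type~II case holds unconditionally. The additional bookkeeping you describe (verifying $d\zeta\neq 0$ on $\partial M$ via the expansions $\zeta=rf$ and $C^{-1}r\le\lambda^{1/3}\le Cr$) is already contained in those earlier proofs, so nothing further is required.
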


In the Type I case the conformal compactification $(\overline{M},g)$ is K\"ahler scalar-flat. In the Type II case it is strictly extremal K\"ahler. There is indeed Poincar\'e-Einstein that has irregular conformally K\"ahler geometry, where $\mathcal{K}$ either induces an $\bS^1$-action with non-trivial discrete isotropy ($\mathcal{K}=a(x_1\partial_{x_2}-x_2\partial_{x_1})+b(x_3\partial_{x_4}-x_4\partial_{x_3})$ with co-prime integers $a,b$ in Example \ref{example:hyperbolic space}), or only induces an $\mathbb{R}$-action ($\mathcal{K}=a(x_1\partial_{x_2}-x_2\partial_{x_1})+b(x_3\partial_{x_4}-x_4\partial_{x_3})$ with $a/b$ irrational in Example \ref{example:hyperbolic space}). More complicated examples are AdS Kerr metrics.

We restrict ourselves to the regular case for the rest of this paper unless explicitly mentioned.

\subsection{The canonical conformal change}
\label{subsec:canonical conformal change}

We introduce a canonical conformal change for regular conformally K\"ahler Poincar\'e-Einstein manifolds $(M,h)$. Note that the K\"ahler metric $g=\zeta^2h$ still has the ambiguity of scaling by constants.  Over the end of $(M,g)$, since $\mathcal{K}=J\nabla_g\zeta$ has no zero and induces a free $\bS^1$-action, we can apply K\"ahler reduction to write the metric $g=W(d\zeta^2+e^v(dx^2+dy^2))+W^{-1}\eta^2$ with $v,W,\eta$ satisfying \eqref{eq:Type I Toda}-\eqref{eq:Type I d eta} or \eqref{eq:twisted toda}-\eqref{eq:d eta}. Denote the Riemann surface obtained from the K\"ahler reduction as $\Sigma$. Pick a metric $g_{\Sigma}$ over $\Sigma$ with constant curvature $K_{\Sigma}=1,0,-1$, which is normalized to have volume $4\pi^2$ if $K_\Sigma=0$. Since $W$ is globally defined and $We^v(dx^2+dy^2)$ is the metric arising from K\"ahler reduction on $\Sigma$, $e^v(dx^2+dy^2)$ is a well-defined metric on $\Sigma$ as well. Write $e^v(dx^2+dy^2)=e^wg_{\Sigma}$, where $w$ is a function over $\Sigma$, which depends on $\zeta$.

We first discuss the Type II case. Let $k\in\mathbb{R}\setminus\{0\}$ be the constant such that 
$$k\cdot (2\sqrt{6}\max|\mathscr{W}_h^+|_h)^{1/3}=\frac{1}{2}\sign(s_g).$$
Recall that in Section \ref{subsec:Type II with Kahler} the Ans\"atze \eqref{eq:symplectic reduction}-\eqref{eq:d eta} are formulated with $\zeta=\lambda^{1/3}$ and $g=\lambda^{2/3}h$.
Now set $g_\sharp := k^2 g = k^2 \zeta^2 h$. Notice that $k\in\mathbb{R}\backslash\{0\}$ is chosen so that 
\begin{equation}\label{eq:definition of k Type II}
    k \zeta_{\max} = \tfrac{1}{2} \quad \text{if } s_g>0 \text{ on } M, 
\qquad\text{or}\qquad 
k \zeta_{\min} = \tfrac{1}{2} \quad \text{if } s_g<0 \text{ on } M.
\end{equation}
Write $\xi:=k\zeta,v_\sharp:=v+\log k^2,\eta_\sharp=k\eta$. The Ans\"atze \eqref{eq:symplectic reduction}-\eqref{eq:d eta} over the end of $(M,h)$ now become
\begin{equation}\label{eq:g first reduction}
  g_\sharp=W(d\xi^2+e^{v_\sharp}(dx^2+dy^2))+W^{-1}\eta_\sharp^2,
\end{equation}
\begin{equation}\label{eq:twisted toda first reduction}
  e^{v_\sharp}_{\xi\xi}+(v_{\sharp})_{xx}+(v_{\sharp})_{yy}=-\xi e^{v_\sharp}\frac{12-6\xi \partial_{\xi}v_{\sharp}}{12k^3+\xi^3},
\end{equation}
\begin{equation}\label{eq:W first reduction}
  W=\frac{12-6\xi \partial_{\xi}v_{\sharp}}{12+\xi^3/k^3},
\end{equation}
\begin{equation}\label{eq:d eta first reduction}
  d\eta_\sharp=(We^{v_\sharp})_{\xi}dxdy+W_xdyd\xi+W_yd\xi dx.
\end{equation}
We can further simplify  by writing $e^{v_\sharp}(dx^2+dy^2)$ as $e^{w_\sharp}g_{\Sigma}$. Then the Ans\"atze \eqref{eq:g first reduction}-\eqref{eq:d eta first reduction} become
\begin{equation}\label{eq:g final}\tag{ansatz-$1_k$}
  g_\sharp=W(d\xi^2+e^{w_\sharp}g_{\Sigma})+W^{-1}\eta_\sharp^2,
\end{equation}
\begin{equation}\label{eq:twisted toda final}\tag{ansatz-$2_k$}
  e^{w_\sharp}_{\xi\xi}+\Delta_{\Sigma}{w_\sharp}-2K_{\Sigma}=-\xi e^{w_\sharp}\frac{12-6\xi \partial_{\xi}{w_\sharp}}{12k^3+\xi^3},
\end{equation}
\begin{equation}\label{eq:W final}\tag{ansatz-$3_k$}
  W=\frac{12-6\xi \partial_{\xi}{w_\sharp}}{12+\xi^3/k^3},
\end{equation}
\begin{equation}\label{eq:d eta final}\tag{ansatz-$4_k$}
  d\eta_\sharp=\star ((d+\partial_\xi{w_\sharp} d\xi)W).
\end{equation}
Here, in \eqref{eq:d eta final}, $\star$ refers to the Hodge star of $d\xi^2+e^{w_\sharp}g_{\Sigma}$. The function $\xi$ is the moment map for the Hamiltonian Killing field $\mathcal{K}_\sharp:=\frac{1}{k}\mathcal{K}$. The K\"ahler form of $g_\sharp$ is $\omega_{\sharp}:=d\xi \eta_\sharp+e^{w_\sharp}\omega_{\Sigma}$. The scalar curvature of $g_\sharp$ is $s_{g_\sharp}=\frac{1}{k^3}\xi$ in the Type II case.

As for the Type I case, starting with any K\"ahler $g=\zeta^2h$, we again rescale $\zeta$ to $\xi$  so that
\begin{equation}\label{eq:definition of k Type I}
     \xi_{\max}=\tfrac12.
\end{equation}
This normalization is chosen to align with the Type I scaling: setting $k=\infty$ (or equivalently, $k=-\infty$) in \eqref{eq:g final}--\eqref{eq:d eta final} reproduces exactly the ansatz \eqref{eq:Type I metric}--\eqref{eq:Type I d eta}.

In the following, we shall exclusively work with canonical infinity and the above Ans\"atze, and ignore the lower index $\sharp$ for simplicity. 

\begin{definition}\label{def:canonical infinity}
  For a Poincar\'e-Einstein $(M,h)$ with regular conformally K\"ahler geometry, its \emph{canonical conformal change} is defined as the K\"ahler metric $g:=g_\sharp$ above, and its \emph{canonical infinity} is defined as $g^{\flat}:=g|_{\partial M}$. There is also the \emph{2-dimensional infinity} $g^{\natural}$, by which we mean the metric on $\Sigma$ obtained by taking quotient of $g^\flat$ by the $\bS^1$-action. We shall denote regular conformally K\"ahler Poincar\'e-Einstein with its canonical conformal change as a triple $(M,h,g)$.
\end{definition}

Note that from \eqref{eq:W final}, $W\equiv1$ on the boundary $\partial M$, so the Killing field $\mathcal{K}$ over $\partial M$ has constant size one. The canonical infinity $g^{\flat}=e^wg_{\Sigma}+\eta^2$. Under our convention here, the standard hyperbolic metric $(B^4,h)$ with conformally K\"ahler geometry given by the Killing field $\mathcal{K}$ rotating the Hopf fibration has canonical conformal change
$$g=W(d\xi^2+e^wg_{S^2})+W^{-1}\eta^2$$
with $W=\frac{1}{1-2\xi},e^w=\frac{1}{4}(1-2\xi)^2$, and $\xi\in[0,\frac{1}{2}]$. The canonical infinity is $g^\flat=\frac{1}{4}g_{S^2}+\eta^2=g_{S^3}$.

\begin{remark}\label{remark:gauge}
  In the above formulation \eqref{eq:g final}-\eqref{eq:d eta final}, there is the issue of choice of gauge when $\Sigma=S^2$. For any automorphism $\Phi:S^2\to S^2$ that preserves the complex structure, the metric $\Phi^*g_{S^2}$, which can be written as $e^\psi g_{S^2}$ for a function $\psi$ as $\Phi^*g_{S^2}$ is conformal to $g_{S^2}$, still has constant curvature. There is no such issue when $\ttg\geq1$. 
\end{remark}

\begin{remark}\label{remark:AdS-Schwarzchild}
In the AdS--Schwarzschild family, for each $\beta \in \bigl(0,\tfrac{1}{\sqrt{3}}\bigr)$ there exist two distinct values of $m$ in \eqref{eq:AdS-Schwarzchild family}, both yielding Poincar\'e--Einstein metrics whose conformal infinity is the conformal class of the product metric $\mathbb{S}^2 \times \mathbb{S}^1_\beta$, where $\mathbb{S}^2$ is the round $S^2$. However, these two Type~II Poincar\'e--Einstein metrics have different canonical infinity, which in fact differ by a rescaling.
\end{remark}

\subsection{Morse-Bott property for \texorpdfstring{$\xi$}{}}
\label{subsec:Morse-Bott}

We have the observation by LeBrun that $\xi$ is a Morse-Bott function \cite{lebrun20}.
\begin{lemma}
  The function $\xi$ over $M$ is a Morse-Bott function. More precisely, the critical locus $\nabla_g\xi=0$ is a disjoint union $\sqcup C_j$ of compact submanifolds, and the Hessian $\mathrm{Hess}_g\xi$ is non-degenerate on the normal bundle $(TC_j)^{\perp}$ for each $C_j$. Each $C_j$ is either a single point, or a real 2-dimensional surface.
\end{lemma}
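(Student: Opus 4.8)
The plan is to identify $\operatorname{Crit}(\xi)$ with the zero set of the Killing field and then read off the Morse--Bott property from the classical structure theory of such zero sets together with the holomorphy of $\mathcal{K}$. First I would observe that $\mathcal{K}=J\nabla_g\xi$ is Killing not only for $h$ but also for the K\"ahler metric $g=\zeta^2h$: indeed $\mathcal L_{\mathcal K}g=2\zeta(\mathcal K\zeta)\,h$ and $\mathcal K\zeta=g(\nabla_g\zeta,J\nabla_g\zeta)=0$ since $J$ is $g$-skew. Since $J$ is an isomorphism, $\operatorname{Crit}(\xi)=\{\nabla_g\xi=0\}=\{\mathcal K=0\}$. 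By Lemma~\ref{lem:conformal Killing fields} the field $\mathcal K$ extends smoothly to $\overline M$, and in the regular case (Definition~\ref{def:regular conformally Kahler}) it is nowhere zero on $\partial M$; hence $\{\mathcal K=0\}$ is a closed subset of the compact manifold $\overline M$ disjoint from $\partial M$, so it is a compact subset of the interior $M$.

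Next I would use the classical structure of zeros of a Killing field (Kobayashi): $\{\mathcal K=0\}$ is a disjoint union of closed, totally geodesic submanifolds, one through each of its points, and near a zero $p$ the flow of $\mathcal K$ is conjugated by $\exp_p$ to the linear flow of $A_p:=\nabla\mathcal K|_p$, a $g$-skew-symmetric endomorphism of $T_pM$ with $\ker A_p=T_pC_j$, acting invertibly on $N_pC_j:=(\ker A_p)^\perp$. Since $\mathcal K$ is moreover symplectic --- $\iota_{\mathcal K}\omega=-d\xi$ is closed, so $\mathcal L_{\mathcal K}\omega=0$ --- it is holomorphic, hence $A_p$ commutes with $J$; in particular $N_pC_j$ is $J$-invariant and therefore even-dimensional. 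As $A_p$ cannot vanish (a Killing field with $\mathcal K(p)=0=\nabla\mathcal K|_p$ is identically zero on a connected manifold), and $\dim M=4$, we get $\dim N_pC_j\in\{2,4\}$, so each $C_j$ is a point or a closed surface (in fact a complex curve), and compactness comes from the previous paragraph.

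For the non-degeneracy of the Hessian I would compute, using $\nabla_g\xi=-J\mathcal K$ and $\nabla J=0$, that $\operatorname{Hess}_g\xi=-J\circ\nabla\mathcal K$ as a symmetric endomorphism, so at $p\in C_j$ it equals $-JA_p$, with kernel $T_pC_j$. Splitting $N_pC_j$ into $J$-invariant $2$-planes, on each such plane the skew endomorphism commuting with $J$ is $a_\ell J$ for some $a_\ell\in\mathbb R$, nonzero by invertibility of $A_p|_{N_pC_j}$, whence $-JA_p=a_\ell\,\operatorname{Id}$ there. Thus $\operatorname{Hess}_g\xi|_{N_pC_j}=\operatorname{diag}(a_1,a_1,a_2,a_2,\dots)$ is non-degenerate, with even Morse index $2\,\#\{\ell:a_\ell<0\}$, which is exactly the Morse--Bott condition.

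I expect the conceptually load-bearing input to be the fact that $\operatorname{Crit}(\xi)$ stays in a compact part of the interior: this is precisely where the \emph{regular} hypothesis enters, via $\mathcal K$ being non-vanishing on $\partial M$ and smooth up to $\overline M$. Beyond that, the argument is the standard linear-algebra package for Hamiltonian Killing fields on K\"ahler manifolds (Frankel, Atiyah, Guillemin--Sternberg); the only care needed is to run it on the conformally rescaled metric $g=\zeta^2h$ in this non-compact, boundary setting rather than to quote it verbatim. (One could alternatively argue that $\mathcal K$ integrates to a Hamiltonian $\bS^1$-action on all of $M$ and cite those theorems directly, but verifying periodicity of the flow in the interior is an extra step, so the Killing-field computation above is the cleaner route.)
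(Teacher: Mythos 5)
Your argument is correct and is essentially the paper's proof written out in full: the paper disposes of this lemma by citing Lemma~1 of \cite{lebrun20} on the grounds that $\mathcal{K}=J\nabla_g\xi$ is a Hamiltonian Killing field, which is exactly the Frankel-type package (zero set of a holomorphic Killing field, $\mathrm{Hess}_g\xi=-J\circ\nabla\mathcal{K}$, $J$-invariant splitting of the normal space) that you reproduce. The one point you supply beyond the citation --- compactness of the critical locus via the smooth extension of $\mathcal{K}$ to $\overline{M}$ and its nonvanishing on $\partial M$ in the regular case --- is handled correctly.
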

\begin{proof}
  This is exactly Lemma 1 in \cite{lebrun20} since $\mathcal{K}=J\nabla_g\xi$ is a Hamiltonian Killing field.
\end{proof}

There can only be isolated critical points or critical surface for the Morse-Bott function $\xi$.
Computing the conformal change of the scalar curvature we obtain
\begin{equation}
  -12=\xi^2s_g+6\xi\Delta_g\xi-12|\nabla_g\xi|_g^2.
\end{equation}
Therefore, we have:
\begin{itemize}
  \item Each isolated critical point must be either a local maximum, or a saddle point.
  \item Each critical surface $C$ must be a local maximum.
\end{itemize}
In the saddle point case, there is an orthogonal decomposition of the tangent space into the direct sum of two $J$-invariant 2-dimensional subspaces, where $\mathrm{Hess}_g\xi$ is positive definite on the first subspace and negative definite on the second subspace. Applying the Morse theory on $\xi$, starting from the maximum $\xi=\frac{1}{2}$ to the minimum $\xi=0$, one further obtains that up to homotopy $M$ is gotten by adding
\begin{itemize}
  \item a disjoint, unattached point for each isolated local maximum;
  \item a disjoint, unattached Riemann surface for each non-isolated local maximum;
  \item a 2-disk, attached along its boundary, for each saddle point.
\end{itemize}
See the proof of Proposition 3 of \cite{lebrun20}. Because we are only attaching path-connected spaces along with their path-connected boundaries, there only exists one global maximum for $\xi$ that is either a critical point or a critical surface, and all other critical points are isolated saddle points. There is no other critical surface and isolated local maximum.

If there is no saddle point, beginning with the maximum, which is either a point or a surface, up to homotopy, the manifold $M$ is obtained by purely attaching two cells.
As a particular consequence, when $M$ is $B^4$,
\begin{corollary}\label{cor:topology over point}
    For a Poincar\'e-Einstein $(B^4,h)$ with regular conformally K\"ahler geometry, the Killing field $\mathcal{K}$ has exactly one isolated fixed point.
\end{corollary}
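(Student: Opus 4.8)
The plan is to combine the Morse-theoretic cell description of $M$ developed in this section with the contractibility of $B^4$. First I would note that, in the interior of $M$, the zero set of $\mathcal{K}=J\nabla_g\xi$ coincides with the critical set $\{\nabla_g\xi=0\}$, while $\mathcal{K}$ is nowhere vanishing on $\partial M$ because the $\mathbb{S}^1$-action there is free; hence it suffices to prove that $\xi$ has exactly one critical point in $M=B^4$, a single isolated maximum. From the discussion preceding the corollary, the critical set of $\xi$ consists of one maximal piece $C_0$ (either an isolated point or a closed surface) together with finitely many isolated saddle points $p_1,\dots,p_n$, and $M$ is homotopy equivalent to the CW complex obtained from $C_0$ by attaching one $2$-cell along a circle for each $p_j$. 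Since $M=B^4$ is contractible, this complex is contractible; in particular $H_2(M;\mathbb{Z})=0$.

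Second, I would rule out the possibility that $C_0$ is a surface. The critical surface $C_0$ is the zero set of the holomorphic Hamiltonian Killing field $\mathcal{K}$ on the Kähler manifold $(M,g)$, hence a closed submanifold which is moreover orientable (being a complex submanifold). In the cellular chain complex of the CW model above, the top cell of $C_0$ is a $2$-cycle (as $C_0$ is closed), it is not a boundary (there are no $3$-cells), and the extra $2$-cells attached for the saddle points only enlarge the degree-$2$ chain group with differentials landing in degree $1$; so this cycle still represents a nonzero class. Thus $H_2(M;\mathbb{Z})\neq 0$, contradicting contractibility, and therefore $C_0=\{p\}$ is an isolated point.

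Third, with $C_0=\{p\}$ every attaching circle maps to the point $p$, so $M\simeq\bigvee_{j=1}^{n}S^2$ and $H_2(M;\mathbb{Z})\cong\mathbb{Z}^n$; contractibility forces $n=0$. Hence $\xi$ has no saddle points and its only critical point is the isolated maximum $p$, which is consequently the unique and isolated zero of $\mathcal{K}=J\nabla_g\xi$ on $\overline{B^4}$, giving exactly one isolated fixed point.

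The only genuine subtlety is the verification that attaching $2$-cells cannot annihilate the degree-$2$ homology carried by a critical surface; this is the cellular chain-complex observation above, and it uses orientability of $C_0$, which comes from $C_0$ being a complex submanifold of $(M,g)$. Everything else is a direct application of the Morse theory already set up in this section together with the contractibility of $B^4$. One could alternatively treat the $C_0=\{p\}$ case via Euler characteristics (from the cell model, $\chi(B^4)=1=\chi(\{p\})+n$ forces $n=0$), but the surface case still requires the $H_2$ argument, so it is cleanest to run everything through $H_2(M;\mathbb{Z})=0$.
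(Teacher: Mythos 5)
Your proof is correct and follows essentially the same route as the paper, which states the corollary as an immediate consequence of the Morse--Bott cell decomposition (one maximal point or surface plus a $2$-cell per saddle) combined with the contractibility of $B^4$. The paper leaves the homological details implicit; your $H_2(M;\mathbb{Z})=0$ argument ruling out both a critical surface and any saddle points is exactly the intended verification and is carried out correctly.
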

When $M$ is diffeomorphic to a line bundle over a Riemann surface $\Sigma_{\ttg}$ with genus $\ttg$, we also have 
\begin{corollary}\label{cor:topology over riemann surface}
    For a Poincar\'e-Einstein $(M,h)$ with regular conformally K\"ahler geometry, where the underlying manifold $M$ is diffeomorphic to a complex line bundle over $\Sigma_{\ttg}$, the Killing field $\mathcal{K}$ only has a fixed surface with genus $\ttg$. 
\end{corollary}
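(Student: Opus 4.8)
The plan is to pass from $\mathcal{K}$ to the circle action it generates and run Morse theory for the moment map $\xi$, keeping track of the topology of the symplectic reductions. Throughout I take $\ttg\ge 1$ (as elsewhere in the paper), so that $M$, being diffeomorphic to a line bundle over $\Sigma_{\ttg}$, is homotopy equivalent to $\Sigma_{\ttg}$; in particular $\pi_1(M)\neq 1$ and $\chi(M)=2-2\ttg$. First I would make the set-up precise. Since $\mathcal{K}$ is simultaneously a Killing field of the smooth metric $g$ on the \emph{compact} manifold $\overline{M}$, it lies in a torus subgroup $T\subseteq \operatorname{Isom}(\overline M,g)$. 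On $\partial M$ the field $\mathcal{K}$ is already periodic by regularity; on the other hand any positive-dimensional subtorus of $T$ acting trivially on the hypersurface $\partial M$ would act trivially on all of $\overline M$, so $\mathcal{K}$ itself must be periodic, i.e.\ it generates an isometric $\mathbb{S}^1$-action on $(\overline M,g)$, free near $\partial M$, whose fixed locus equals the critical set of $\xi$. By the Morse--Bott analysis of \Cref{subsec:Morse-Bott}, this fixed locus is $C_{\max}\sqcup\{p_1,\dots,p_s\}$, where $C_{\max}$ is the unique maximum stratum (a single point or a single connected closed surface) and each $p_i$ is an isolated saddle at which $\mathbb{S}^1$ acts on $T_{p_i}M\cong\mathbb{C}^2$ with weights of opposite sign. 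The goal is to show that $C_{\max}$ is a closed surface of genus $\ttg$ and that $s=0$.

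The heart of the argument is the claim that the topological type of the reduced surface $M_c:=\xi^{-1}(c)/\mathbb{S}^1$ is independent of $c\in(0,\tfrac12)$. For small $c>0$ the invariant collar of $\partial M$ gives an $\mathbb{S}^1$-equivariant diffeomorphism $\xi^{-1}(c)\cong\partial M$ with free action, so $M_c\cong\partial M/\mathbb{S}^1=\Sigma_{\ttg}$, a closed surface of genus $\ttg$. Over any interval of regular values $\xi$ is a proper $\mathbb{S}^1$-equivariant submersion, hence the corresponding $M_c$ are mutually diffeomorphic (Ehresmann). Across a saddle value $c_0$ coming from some $p_i$, all the change is confined to an arbitrarily small invariant neighborhood of $p_i$, where the standard description of the Kähler reduction of the linear $\mathbb{S}^1$-action on $\mathbb{C}^2$ with weights $(a,-b)$, $a,b>0$, shows that $\xi^{-1}(c_0\pm\varepsilon)/\mathbb{S}^1$ is, near $p_i$, a $2$-disk (possibly with one interior cone point); consequently $M_{c_0-\varepsilon}$ and $M_{c_0+\varepsilon}$ differ only by excising one such disk and regluing another along the same boundary circle, so they are homeomorphic and of equal genus. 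Thus $\operatorname{genus}(M_c)=\ttg$ for every $c\in(0,\tfrac12)$.

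To conclude I let $c\to\tfrac12$. If $C_{\max}$ were a point, then for $c$ near $\tfrac12$ the level $\xi^{-1}(c)$ is a $3$-sphere about it and $M_c$ is a (weighted) $\mathbb{CP}^1$, of genus $0$, contradicting $\ttg\ge1$; hence $C_{\max}$ is a connected closed surface. Since the Morse--Bott structure at $C_{\max}$ has normal index $2$, for $c$ near $\tfrac12$ the level $\xi^{-1}(c)$ is the unit normal circle bundle of $C_{\max}$, on whose fibers $\mathbb{S}^1$ acts, so $M_c\cong C_{\max}$ and therefore $\operatorname{genus}(C_{\max})=\ttg$. Finally, by the Morse-theoretic description above $\overline M$ is built (up to homotopy) from $C_{\max}$ by attaching one $2$-disk per saddle, so $\chi(M)=\chi(C_{\max})+s=(2-2\ttg)+s$; comparing with $\chi(M)=2-2\ttg$ forces $s=0$. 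Hence $\operatorname{Fix}(\mathcal{K})=C_{\max}$ is a closed surface of genus $\ttg$, as claimed.

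The step I expect to be the main obstacle is the invariance of $\operatorname{genus}(M_c)$ across a saddle value: one has to set up carefully the local model for the symplectic reduction near a balanced isolated fixed point and check that the wall-crossing changes only the orbifold and symplectic data, not the underlying topological surface. A secondary, and routine, point is the upgrade from ``$\mathcal{K}$ periodic on $\partial M$'' to ``$\mathcal{K}$ periodic on $M$''.
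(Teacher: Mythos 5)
Your proof is correct, and it takes a genuinely different route from the paper's. The paper obtains this corollary from the Morse--Bott cell decomposition of Section~\ref{subsec:Morse-Bott} (following the proof of Proposition~3 in \cite{lebrun20}): up to homotopy $M=C_{\max}\cup(s\text{ two-cells})$, and one pins down $C_{\max}$ and $s$ from the known homotopy type of $M\simeq\Sigma_{\ttg}$. Carrying that route to completion actually requires more than $\pi_1$ and $\chi$: for instance $\Sigma_2$ with two $2$-cells attached along $a_2$ and $b_2$ is aspherical with fundamental group $\mathbb{Z}^2$, hence homotopy equivalent to $T^2$, so the homotopy type alone cannot exclude a higher-genus bolt compensated by saddles; what closes this is the perfectness of the moment map as a Morse--Bott function (Frankel), giving $b_1(M)=b_1(C_{\max})$ and $b_2(M)=b_2(C_{\max})+s$. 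Your argument avoids this entirely by tracking the topological type of the reductions $M_c$: constancy over regular values is equivariant Ehresmann, the wall-crossing at an isolated fixed point of signature $(2,2)$ only swaps a disk for a disk (the Guillemin--Sternberg picture, which in real dimension two changes at most the orbifold structure at one point), so the genus is frozen at $\ttg$ from $\partial M$ up to the bolt, and the Euler characteristic then forces $s=0$. This is self-contained, handles the possible cone points correctly, and as a by-product shows all K\"ahler reductions are diffeomorphic to $\Sigma_{\ttg}$ even across saddle levels, which is consistent with how the reductions are used later in the paper. Your preliminary upgrade from ``$\mathcal{K}$ periodic on $\partial M$'' to ``$\mathcal{K}$ generates an $\mathbb{S}^1$-action on $\overline{M}$'' (via the closure torus in $\operatorname{Isom}(\overline{M},g)$ and the fact that an isometry fixing the boundary hypersurface pointwise is the identity) is also a point the paper leaves implicit, and you handle it correctly; the only caveat is that your argument, as you note, is stated for $\ttg\ge 1$, whereas the genus-zero bolt case is treated separately in the paper by other means.
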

The fixed surface, denoted by $\Xi$, is the \emph{bolt} of the $\bS^1$-action. At the level of complex structure, there is a complex line bundle $L\to\Sigma_{\ttg}$, and in this situation $M$ is a subdomain of $L$ containing the bolt $\Xi\simeq\Sigma_{\ttg}$, invariant under the natural rotation real holomorphic vector field, which up to scaling is $\mathcal{K}$.

\section{Decoupled solutions and cohomogeneity-one Einstein metrics}
\label{sec:decoupled solutions}

A class of solutions to \eqref{eq:twisted toda final} are the \emph{decoupled solutions}. We begin with some simple observation. Consider a regular conformally K\"ahler PE 4-manifold $(M,h,g)$, where the critical set of the Morse-Bott function $\xi$ consists of only one maximum point or only one maximum surface. The underlying manifold $M$ is then either $B^4$, or a complex line bundle over a surface. At non-critical values, level set of $\xi$ is an $\mathbb{S}^1$-bundle over the corresponding K\"ahler reduction. All the K\"ahler reductions are biholomorphic, denoted as a Riemann surface $\Sigma$.  A proper constant multiple of $d\eta$ is the curvature of the $\mathbb{S}^1$-bundle. Suppose the period of $\mathcal{K}$ is $\mathfrak{p}>0$, by which we mean the time $\mathfrak{p}$ flow of $\mathcal{K}$ exactly gives the primitive $\bS^1$-action. Since $W\equiv1$ over $\partial M$, the period $\mathfrak{p}$ also has the geometric meaning of the length of the $\mathbb{S}^1$-fibers over $\partial M$. Then we have
$$\frac{1}{\mathfrak{p}}\int_{\Sigma}d\eta=\text{degree of the $\mathbb{S}^1$-bundle}.$$
Let ${deg}$ denotes the degree of the bundle. If $M$ is the total space of a complex line bundle, then $deg$ coincides with the degree of the bundle; whereas if $M$ is $B^4$, we have $deg=-1$.
From \eqref{eq:d eta} it follows
$$\int_{\Sigma}(We^w)_\xi d\mathrm{vol}_{\Sigma}={deg}\cdot\mathfrak{p}.$$
Consequently
\begin{equation}\label{eq:integral We^w}
  \int_{\Sigma}We^wd\mathrm{vol}_{\Sigma}=({deg}\cdot\mathfrak{p})\xi+\mathfrak{a}
\end{equation}
for a constant $\mathfrak{a}$. Evaluating the above at $\xi=0$ we see that $\mathfrak{a}$ is exactly the area of the K\"ahler reduction over the boundary $\partial M$. Next noticing that \eqref{eq:twisted toda final} is exactly $(e^{w})_{\xi\xi}+\Delta_{\Sigma} w-2K_\Sigma=-\xi W e^w\frac{1}{k^3}$, one can integrate it over $\Sigma$ to obtain
$$\int_{\Sigma} (e^w)_{\xi\xi}d\mathrm{vol}_\Sigma-4\pi\chi(\Sigma)=-\frac{1}{k^3}(({deg}\cdot\mathfrak{p})\xi+\mathfrak{a})\xi.$$
As a consequence,
$$\int_{\Sigma}e^wd\mathrm{vol}_{\Sigma}=-\frac{{deg}\cdot\mathfrak{p}}{12k^3}\xi^4-\frac{\mathfrak{a}}{6k^3}\xi^3+2\pi\chi(\Sigma)\xi^2+A\xi+B.$$
Using \eqref{eq:W final} and \eqref{eq:integral We^w} we can compute the constants $A$ and $B$ explicitly and get
\begin{equation}\label{eq:integral e^w}
  \int_{\Sigma}e^wd\mathrm{vol}_{\Sigma}=-\frac{{deg}\cdot\mathfrak{p}}{12k^3}\xi^4-\frac{\mathfrak{a}}{6k^3}\xi^3+2\pi\chi(\Sigma)\xi^2+2({deg}\cdot\mathfrak{p})\xi+\mathfrak{a}.
\end{equation}

With \eqref{eq:integral We^w} and \eqref{eq:integral e^w}, it is direct to write down all the solutions to \eqref{eq:twisted toda final} that takes the form $w=w_1+w_2$, namely the \emph{decoupled solutions}, where $w_1$ is a function of $\xi$ and $w_2$ is a function on the base $\Sigma$. Computation gives
$$(e^{w_1})_{\xi\xi}+\xi e^{w_1}\frac{12-6\xi\partial_{\xi}w_1}{12k^3+\xi^3}=-e^{-w_2}(\Delta_{\Sigma}w_2-2K_\Sigma)\equiv c$$
with $c$ being a constant, having the same sign as $K_\Sigma$ (if $K_\Sigma=0$ then also $c=0$). This is equivalent to $(e^{w_1})_{\xi\xi}+\xi \frac{12e^{w_1}-6\xi\partial_{\xi}(e^{w_1})}{12k^3+\xi^3}=c$ and $e^{w_2}g_{\Sigma}$ has constant curvature $\frac{1}{2}c$. It follows that the decoupled solutions take the form
\begin{equation}
  e^{w}=\frac{1}{\mathrm{Vol}_{\Sigma}}\left(-\frac{{deg}\cdot\mathfrak{p}}{12k^3}\xi^4-\frac{\mathfrak{a}}{6k^3}\xi^3+2\pi\chi(\Sigma)\xi^2+2({deg}\cdot\mathfrak{p})\xi+\mathfrak{a}\right)e^{\psi}
\end{equation}
where $\psi$ is any function on the base $\Sigma$ such that $e^\psi g_{\Sigma}$ has the same curvature $K_\Sigma$ and volume $\mathrm{Vol}_{\Sigma}$; in particular, $\psi$ is the logarithmic determinant of the Jacobian of a conformal map of $\Sigma$ and such non-trivial $\psi$ exists only when $\Sigma=S^2$. By changing our choice of the $g_{\Sigma}$, one can further simplify to
\begin{equation}\label{eq:decoupled solutions w}
  e^{w}=\frac{1}{\mathrm{Vol}_{\Sigma}}\left(-\frac{{deg}\cdot\mathfrak{p}}{12k^3}\xi^4-\frac{\mathfrak{a}}{6k^3}\xi^3+2\pi\chi(\Sigma)\xi^2+2({deg}\cdot\mathfrak{p})\xi+\mathfrak{a}\right)
\end{equation}
with
\begin{equation}\label{eq:decoupled solutions W}
  W=\frac{({deg}\cdot\mathfrak{p})\xi+\mathfrak{a}}{-\frac{{deg}\cdot\mathfrak{p}}{12k^3}\xi^4-\frac{\mathfrak{a}}{6k^3}\xi^3+2\pi\chi(\Sigma)\xi^2+2({deg}\cdot\mathfrak{p})\xi+\mathfrak{a}}.
\end{equation}

Locally, these decoupled solutions define exactly the same Einstein metrics of Page-Pope \cite{page} in dimension 4. By considering the smoothness condition inside, one can classify the Poincar\'e-Einstein manifolds $(M,h)$ with regular conformally K\"ahler geometry, which are decoupled in the above sense. 
These particularly include classical examples like the AdS-Schwarzschild (conformal infinity $\mathbb{S}^1_\beta\times \mathbb{S}^2$), Pedersen \cite{Pedersen1986} (conformal infinity Berger spheres).

In Sections \ref{subsec:T2 decouple}–\ref{subsec:S2 decouple}, we establish necessary conditions for the decoupled solutions to correspond to smooth Einstein metrics. In Section \ref{subsec:conclusion}, we prove that these conditions are also sufficient, and we conclude by summarizing all decoupled solutions.

\subsection{Decoupled solutions with base \texorpdfstring{$T^2$}{}}
\label{subsec:T2 decouple}

Consider the case that $\Sigma$ is a Riemann surface $T^2$, hence the manifold $M$ is the total space of a complex line bundle over the base $T^2$. Recall that in this case we equip $\Sigma$ with the flat metric $g_\Sigma$ of volume $\mathrm{Vol}_\Sigma=4\pi^2$. First, we must have $We^w=\frac{1}{4\pi^2}(({deg}\cdot\mathfrak{p})\xi+\mathfrak{a})>0$ over $[0,\frac{1}{2}]$ since $We^wg_{\Sigma}$ is the K\"ahler reduction metric on the base $T^2$. Moreover, at $\xi=\frac{1}{2}$, the Killing field $\mathcal{K}$ vanishes, so we have $W^{-1}(\frac12)=0$, which now gives
\begin{equation}\label{eq:T2 smooth condition 1}
  -\frac{{deg}\cdot\mathfrak{p}}{12k^3}\frac{1}{16}-\frac{\mathfrak{a}}{6k^3}\frac{1}{8}+2({deg}\cdot\mathfrak{p})\frac{1}{2}+\mathfrak{a}=0.
\end{equation}
In particular, this implies $k\neq-\frac{1}{\sqrt[3]{96}}$, as $\frac{1}{2}deg\cdot\mathfrak{p}+\mathfrak{a}>0$.
Further, since $g=Wd\xi^2+W^{-1}\eta^2+We^wg_{\Sigma}$ and the metric $Wd\xi^2+W^{-1}\eta^2$ on each $\mathbb{C}^*$ closes up to be $\mathbb{C}$ at $\xi=\frac12$, from \eqref{eq:decoupled solutions W} we have $W=A_0(1/2-\xi)^{-1}+O(1)$ near $\xi=\frac12$ with a constant $A_0>0$. Applying \eqref{eq:decoupled solutions w} and \eqref{eq:decoupled solutions W} we can compute $A_0$ explicitly as follows
\begin{equation}\label{eq:T2 smooth condition 2}
	A_0=-\frac{\frac{1}{2}{deg}\cdot \mathfrak{p}+\mathfrak{a}}{-\frac{{deg}\cdot\mathfrak{p}}{24k^3}-\frac{\mathfrak{a}}{8k^3}+2{deg}\cdot\mathfrak{p}}.
\end{equation}
Implicitly, in the above we require $-\frac{{deg}\cdot\mathfrak{p}}{24k^3}-\frac{\mathfrak{a}}{8k^3}+2{deg}\cdot\mathfrak{p}\neq0$, which is equivalent to $\xi=\frac{1}{2}$ being a non-repeated zero of $-\frac{{deg}\cdot\mathfrak{p}}{12k^3}\xi^4-\frac{\mathfrak{a}}{6k^3}\xi^3+2({deg}\cdot\mathfrak{p})\xi+\mathfrak{a}=0$.
To ensure $Wd\xi^2+W^{-1}\eta^2$ on each $\mathbb{C}^*$  closing up to a smooth metric on $\mathbb{C}$ without cone angle, we also have the following relation between $A_0$ and the period $\mathfrak{p}$ of the Killing field
\begin{equation}\label{eq:T2 smooth condition 3}
  \frac{\mathfrak{p}}{2A_0}=2\pi.
\end{equation}
Summarizing \eqref{eq:T2 smooth condition 1}-\eqref{eq:T2 smooth condition 3} one obtains that 
\begin{itemize}
  \item If ${deg}\neq0$, then
\begin{equation}\label{eq:T2 decoupled period}
      \mathfrak{p} = \frac{96 \pi k^3}{96 k^3 + 1}, \quad 
    \mathfrak{a} = deg \cdot \frac{24 \pi k^3 (1 - 192 k^3)}{(48 k^3 - 1)(96 k^3 + 1)}. 
\end{equation}
Further computation gives
  \begin{align}
    e^w&=\frac{{deg}}{4\pi^2}\cdot\frac{4 \pi  (2\xi-1) \left(1152 k^6-6 k^3 \left(8 \xi^3-12 \xi^2-6
    \xi+1\right)+\xi^3\right)}{\left(48 k^3-1\right) \left(96 k^3+1\right)},\label{eq:T2 decoupled e^w}\\
    We^w&=\frac{{deg}}{4\pi^2}\cdot \frac{24 \pi  k^3 \left(192 k^3 (\xi-1)-4 \xi+1\right)}{\left(48 k^3-1\right) \left(96
    k^3+1\right)}.\label{eq:T2 decoupled W}
  \end{align}
  \item If ${deg}=0$, then $k=\frac{1}{\sqrt[3]{48}}$, $\mathfrak{p}=\frac{2}{3}\pi$, and $\mathfrak{a}$ can take arbitrary positive value. Further computation gives
  \begin{align}
    e^w&=\frac{\mathfrak{a}}{4\pi^2}(1-8\xi^3),\label{eq:decoupled e^w T^2 degree 0}\\
    We^w&=\frac{\mathfrak{a}}{4\pi^2}.\label{eq:decoupled We^w T^2 degree 0}
  \end{align}
\end{itemize}

Now we will precisely determine the range of parameters. In order for the decoupled solution to define a smooth metric, it is necessary to impose the following conditions, which we shall later verify to be sufficient as well.
\begin{enumerate}[label=\textbf{(C\arabic*)},ref=\textbf{(C\arabic*)}]
  \item \label{C1} $\mpp>0$ and $\ma>0$.
  \item \label{C2} $We^w>0$ for all $\xi\in\bigl[0,\tfrac12\bigr]$.
  \item \label{C3} $e^w>0$ on $\bigl[0,\tfrac12\bigr)$ and $e^w$ has a simple zero at $\xi=\tfrac12$.
\end{enumerate}

We shall show that conditions \ref{C1}--\ref{C3} determine precisely one of the ranges listed below. 
Since $e^w$ is, in general, a quartic polynomial in $\xi$, the key step in the following proofs is to verify that the range determined by \ref{C1}--\ref{C2} guarantees $e^w>0$ on $\bigl[0,\tfrac{1}{2}\bigr)$.

\begin{proposition}[$T^2$ base]\label{prop:range for k T2}
    The decoupled solution defines a smooth metric only if either
    \begin{itemize}
        \item $k\in[-\infty, -\frac{1}{\sqrt[3]{96}})\cup (\frac{1}{\sqrt[3]{48}},\infty]$ and $deg<0$;
        \item $k=\frac{1}{\sqrt[3]{48}}$, $deg=0$, $\mathfrak{a}>0$ is arbitrary;
        \item $k\in(\frac{1}{\sqrt[3]{192}},\frac{1}{\sqrt[3]{48}})$ and $deg>0$.
    \end{itemize}
\end{proposition}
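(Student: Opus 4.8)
The plan is to split off the case $deg=0$ and then reduce the case $deg\neq 0$ to an elementary but careful sign analysis of the closed forms \eqref{eq:T2 decoupled period}--\eqref{eq:T2 decoupled W}. If $deg=0$, the smoothness relation \eqref{eq:T2 smooth condition 1} degenerates to $\mathfrak{a}\,(1-\tfrac{1}{48k^3})=0$, and since \ref{C1} forces $\mathfrak{a}>0$ this pins down $k=1/\sqrt[3]{48}$ with $\mathfrak{a}$ otherwise free, which is the middle alternative. For $deg\neq 0$ I would first record that $k=0$ and $k^3\in\{-\tfrac{1}{96},\ \tfrac{1}{192},\ \tfrac{1}{48}\}$ are excluded a priori: $k^3=\tfrac{1}{48}$ would force $deg=0$ through \eqref{eq:T2 smooth condition 1}, $k^3=\tfrac{1}{192}$ would force $\mathfrak{a}=0$ against \ref{C1}, and $k^3=-\tfrac{1}{96}$ was already ruled out in the text. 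Outside these values the formulas \eqref{eq:T2 decoupled period}--\eqref{eq:T2 decoupled W} are available, with the ASD endpoints $k=\pm\infty$ handled by the limits $\mathfrak{p}\to\pi$, $\mathfrak{a}\to-\pi\,deg$.

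The core of the argument is the bookkeeping for \ref{C1} and \ref{C2}. The condition $\mathfrak{p}=\tfrac{96\pi k^3}{96k^3+1}>0$ is equivalent to $k^3(96k^3+1)>0$, i.e.\ $k\in[-\infty,-\tfrac{1}{\sqrt[3]{96}})\cup(0,\infty]$, which splits into four subintervals with endpoints $-\tfrac{1}{\sqrt[3]{96}},0,\tfrac{1}{\sqrt[3]{192}},\tfrac{1}{\sqrt[3]{48}},\infty$. On each of these, the sign of the rational function of $k^3$ multiplying $deg$ in the formula for $\mathfrak{a}$ is constant: negative on the first, second and fourth (so \ref{C1} forces $deg<0$ there) and positive on the third (forcing $deg>0$). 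For \ref{C2} the key observation is that, for a decoupled solution, $We^w$ is \emph{affine} in $\xi$: by \eqref{eq:integral We^w} together with the constancy of $We^w$ on $\Sigma$, $4\pi^2 We^w=deg\cdot\mathfrak{p}\,\xi+\mathfrak{a}$. Hence positivity on $[0,\tfrac12]$ reduces to positivity at the endpoints, i.e.\ to $\mathfrak{a}>0$ (already \ref{C1}) together with $4\pi^2 We^w(\tfrac12)=\tfrac12 deg\cdot\mathfrak{p}+\mathfrak{a}$, which a short computation with \eqref{eq:T2 decoupled period} rewrites as $\tfrac{-24\pi\,deg\,k^3}{48k^3-1}$; so \ref{C2} amounts to $\tfrac{deg\,k^3}{48k^3-1}<0$. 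Checking this in the four surviving cases shows it holds on the first, third and fourth but fails on the second ($0<k^3<\tfrac{1}{192}$, $deg<0$). Discarding that case leaves exactly the three ranges in the statement, which proves the ``only if''.

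What remains --- and what the paper flags as the one nontrivial point --- is to confirm that the remaining condition \ref{C3}, namely $e^w>0$ on $[0,\tfrac12)$ with a simple zero at $\xi=\tfrac12$, imposes no further restriction on the three ranges; this completes the biconditional and is needed for the sufficiency in Section~\ref{subsec:conclusion}. From \eqref{eq:T2 decoupled e^w}, $e^w=\tfrac{deg}{\pi(48k^3-1)(96k^3+1)}(2\xi-1)\,Q(\xi)$ with the cubic $Q(\xi)=(1-48k^3)\xi^3+72k^3\xi^2+36k^3\xi+(1152k^6-6k^3)$. One computes $Q(\tfrac12)=1152\,(k^3+\tfrac{1}{96})^2>0$ (the quadratic in $k^3$ has vanishing discriminant, hence is a perfect square) and $Q(0)=6k^3(192k^3-1)>0$ in each of the three ranges; the former already yields the simple zero of $e^w$ at $\xi=\tfrac12$. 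To see $Q$ has no zero in $(0,\tfrac12)$ I would examine $Q'(\xi)=3(1-48k^3)\xi^2+144k^3\xi+36k^3$: using the signs of its leading coefficient $1-48k^3$, of its vertex at $\xi=\tfrac{-24k^3}{1-48k^3}$, and of $Q'(0)=36k^3$ and $Q'(\tfrac12)=\tfrac34+72k^3$, one checks that $Q'$ is of a single sign on $[0,\tfrac12]$ in each range, so $Q$ is monotone there and stays above $\min\{Q(0),Q(\tfrac12)\}>0$. A final sign check that $\tfrac{deg}{(48k^3-1)(96k^3+1)}<0$ in all three ranges then gives $e^w>0$ on $[0,\tfrac12)$, and the ASD endpoints $k=\pm\infty$ are immediate since there $e^w\propto 1-2\xi$. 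I expect the fiddliest part to be the middle range $\tfrac{1}{\sqrt[3]{192}}<k<\tfrac{1}{\sqrt[3]{48}}$, where $Q'$ may have real roots and one must use that its vertex lies at negative $\xi$ to conclude $Q'>0$ on $[0,\tfrac12]$.
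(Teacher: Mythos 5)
Your proposal is correct and follows essentially the same route as the paper: (C1)--(C2) reduce to sign conditions on rational functions of $k^3$ because $We^w$ is affine in $\xi$ (your computation $\tfrac12 deg\cdot\mathfrak{p}+\mathfrak{a}=\tfrac{-24\pi\,deg\,k^3}{48k^3-1}$ agrees with the paper's $-\tfrac{\pi}{2}\tfrac{deg}{1-1/(48k^3)}$), and (C3) is then verified by showing the cubic factor $Q=A$ is monotone on $[0,\tfrac12]$ because $Q'$ has a single sign, with $\min\{Q(0),Q(\tfrac12)\}=\min\{6k^3(192k^3-1),\,1152(k^3+\tfrac1{96})^2\}>0$ — exactly the paper's argument. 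The only differences are organizational (you exclude the degenerate values $k^3\in\{-\tfrac1{96},\tfrac1{192},\tfrac1{48}\}$ explicitly and run the bookkeeping over the four subintervals where $\mathfrak{p}>0$), and all your sign checks, including the vertex location of $Q'$ in the middle range, are accurate.
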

\begin{proof}
When $deg = 0$, conditions \ref{C1}--\ref{C3} are clearly satisfied by 
\eqref{eq:decoupled e^w T^2 degree 0}--\eqref{eq:decoupled We^w T^2 degree 0}. 
From now on, we assume $deg \neq 0$. The cases $k = \pm\infty$ are immediate and thus omitted, 
so we focus on the case $k \neq \pm\infty$. Since $We^w$ is linear in $\xi$, its minimum is achieved at the endpoints, so condition \ref{C2} is equivalent to $ \mathfrak{a}>0 $ and $ \tfrac{1}{2}deg\cdot \mathfrak{p}+\mathfrak{a}>0 $, where the latter inequality is 
$
-\frac{\pi}{2}\frac{deg}{1-\frac{1}{48k^3}}>0.
$
Hence conditions \ref{C1}--\ref{C2} are equivalent to
\begin{equation}
\frac{96 \pi k^3}{96 k^3+1}>0, \quad deg\cdot\frac{24 \pi k^3 (1-192 k^3)}{(48 k^3-1)(96 k^3+1)}>0, \quad -\frac{\pi}{2}\frac{deg}{1-\frac{1}{48k^3}}>0.
\end{equation}
which are equivalent to $deg<0,k\in(-\infty,-\frac{1}{\sqrt[3]{96}})\cup(\frac{1}{\sqrt[3]{48}},\infty)$, or $deg>0, k\in(\frac{1}{\sqrt[3]{192}},\frac{1}{\sqrt[3]{48}})$.
We now verify these imply \ref{C3}. 

\medskip

\textbf{Case 1}: $deg<0$ and $k\in(-\infty,-\frac{1}{\sqrt[3]{96}})\cup(\frac{1}{\sqrt[3]{48}},\infty)$.

Let $A(\xi)=1152k^6-6k^3(8\xi^3-12\xi^2-6\xi+1)+\xi^3$ with $A'(\xi)=3\xi^2+144k^3\xi+(36k^3-144k^3\xi^2)$. If $k>\frac{1}{\sqrt[3]{48}}$, then $A'(\xi)=36k^3+144k^3\xi(1-\xi)+3\xi^2>0$, so $\min_{\xi\in[0,\frac12]} A=A(0)=6k^3(192k^3-1)>0$. If $k<-\frac{1}{\sqrt[3]{96}}$, then $A'(\xi)$ is convex and negative at the endpoints, hence $A'(\xi)<0$ and $\min_{\xi\in[0,\frac12]}  A=A(1/2)=1152(k^3+\frac{1}{96})^2>0$. In either case $e^w>0$ on $[0,1/2)$. 

\medskip

\textbf{Case 2}: $deg>0$ and $k\in(\frac{1}{\sqrt[3]{192}},\frac{1}{\sqrt[3]{48}})$.

We have $A'(\xi)>0$ and $\min_{\xi\in[0,\frac12]}  A=A(0)=6k^3(192k^3-1)>0$, so again $e^w>0$ on $[0,1/2)$.
\end{proof}

\subsection{Decoupled solutions with base \texorpdfstring{$\Sigma_{\ttg}$}{} when \texorpdfstring{$\ttg> 1$}{}}
\label{subsec:hyperbolic decouple}

Next consider the case $\Sigma=\Sigma_{\ttg}$ where the underlying manifold $M$ is the total space of a complex line bundle over $\Sigma_{\ttg}$. Denote the Euler characteristic $\chi=2-2\ttg<0$. The volume of $(\Sigma_{\ttg},g_{\Sigma_{\ttg}})$ is $\mathrm{Vol}_{\ttg}=-2\pi\chi=4\pi(\ttg-1)$ when $\ttg> 1$.

Again, for decoupled solutions we have 
$$We^w=\frac{1}{\operatorname{Vol}_{{\ttg}}}((deg\cdot \mpp)\xi+\ma)>0$$
over $[0,\frac{1}{2}]$. At $\xi=\frac{1}{2}$, we  have $W^{-1}(\frac{1}{2})=0$, which gives
\begin{equation}\label{eq:e^w vanishes at 1/2}
    -\frac{{deg}\cdot\mathfrak{p}}{12k^3}\frac{1}{16}-\frac{\mathfrak{a}}{6k^3}\frac{1}{8}+2\pi\chi \frac{1}{4}+{deg}\cdot\mathfrak{p}+\mathfrak{a}=0.
\end{equation}
Similar to the $T^2$ case, to ensure the metric $Wd\xi^2+W^{-1}\eta^2$ on each $\mathbb{C}^*$ closing up to $\mathbb{C}$ at $\xi=\frac{1}{2}$, we have $W\sim A_0(\frac{1}{2}-\xi)^{-1}$ near $\xi=\frac{1}{2}$ and we can compute $A_0$ as 
$$A_0=-\frac{\frac{1}{2}{deg}\cdot \mathfrak{p}+\mathfrak{a}}{-\frac{{deg}\cdot\mathfrak{p}}{24k^3}-\frac{\mathfrak{a}}{8k^3}+2\pi\chi+2{deg}\cdot\mathfrak{p}}.$$
To make sure there is no cone angle, we again need 
\begin{equation}\label{eq:mpp relation for g>1}
    \frac{\mpp}{2A_0}=2\pi
\end{equation} 
Therefore
\begin{itemize}
    \item If $k\neq\frac{1}{\sqrt[3]{48}}$, then
    \begin{equation}\label{eq:ma relation for g>1}
        \ma=-\frac{deg\cdot\mpp(1-\frac{1}{192k^3})+\frac{\pi}{2}\chi}{1-\frac{1}{48k^3}}.
    \end{equation}
    By \eqref{eq:mpp relation for g>1}-\eqref{eq:ma relation for g>1}, we have 
    $$\left((\frac{1}{48k^3}+2)\mpp-2\pi\right)\left((\frac{1}{48k^3}+2){deg}\cdot \mpp+2\pi\chi\right)=0.$$
    Hence, we must have $k\neq-\frac{1}{\sqrt[3]{96}}$, and we either have
    \begin{equation}\label{eq:decoupled p and a for g>1}
        \mpp=\frac{96\pi k^3}{96k^3+1},\quad \ma=-\frac{\pi(\chi+deg)}{2(1-\frac{1}{48k^3})}-\frac{\pi deg}{2+\frac{1}{48k^3}},
    \end{equation}
    or
    $$deg\cdot\mpp =-\frac{96\pi \chi k^3}{96k^3+1},\quad \ma=\frac{48\pi \chi k^3}{1+96k^3}.$$
    However, at $\xi=\frac{1}{2}$, we have $\frac{1}{2}deg\cdot\mpp+\ma>0$, which implies the second case cannot happen. Therefore, if $k\neq \frac{1}{\sqrt[3]{48}}$ we must have \eqref{eq:decoupled p and a for g>1}. It also follows that $deg\neq-\chi$, as otherwise we would conclude $\frac{1}{2}deg\cdot\mpp+\ma=0$ from \eqref{eq:decoupled p and a for g>1}, which is impossible. 
    \item If $k=\frac{1}{\sqrt[3]{48}}$, then by \eqref{eq:mpp relation for g>1},  $deg\cdot\mpp=-\frac{2\pi}{3}\chi$. With \eqref{eq:mpp relation for g>1} and that $\frac{1}{2}deg\cdot \mpp+\mathfrak{a}>0$, we have $\mpp=\frac{2\pi}{3}$ and $deg=-\chi$. Here, $\ma$ can be any positive number.
\end{itemize}

Summarizing above one obtains that
\begin{itemize}
    \item If $deg\neq -\chi$, then $k\neq\frac{1}{\sqrt[3]{48}}$ and $\mpp,\ma$ are given by $\eqref{eq:decoupled p and a for g>1}$. Further computation gives
    \begin{align}
        e^w&={\left(-\chi(1 + 96k^3) ( \xi^2 + 12k^3 (1 + 2\xi) ) - 2deg\cdot ( 1152k^6 + \xi^3 - 6k^3 (1 + 2\xi)(1-8\xi+4\xi^2) )\right)} \label{eq:decoupled e^w Sigma_g}\\
        &\quad\cdot \frac{2\pi (1 - 2\xi)}{\operatorname{Vol}_{\ttg}(48k^3-1)(1 + 96k^3)}\notag, \\
        We^w&=\frac{24 k^3 \pi \left( -\chi(1 + 96k^3) - deg \left( -1 + 192 k^3 (1 - \xi) + 4 \xi \right) \right)}{\operatorname{Vol}_{\ttg}\left(48 k^3-1 \right) \left( 1 + 96 k^3 \right)}. \label{eq:decoupled We^w Sigma_g}
    \end{align}
    In the anti-self-dual case ($k=\pm\infty$), they simplify as
    \[
    e^{w}=\frac{\pi}{2\mathrm{Vol}_{\ttg}}\,(1-2\xi)\,\bigl(-\chi(1+2\xi)-2deg\bigr),
    \]
    \[
    W e^{w}=\frac{\pi}{2\,\mathrm{Vol}_{\ttg}}\Bigl(-\chi-2deg(1-\xi)\Bigr).
    \]
    \item If $deg=-\chi$, then $k=\frac{1}{\sqrt[3]{48}}, \mpp=\frac{2\pi}{3}$, and $\ma$ can take arbitrary positive value. Further computation gives
    \begin{align}
    e^w&=\frac{1}{\operatorname{Vol}_{\ttg}}(1-2\xi)\left(\mathfrak{a}(1+2\xi+4\xi^2)-\frac{2\pi\chi}{3}\xi(2+\xi+2\xi^2)\right),\label{eq:decoupled e^w Sigma_g degree -chi}\\
    We^w&=\frac{1}{\mathrm{Vol}_{\ttg}}(\mathfrak{a}-\frac{2}{3}\pi\chi\xi).\label{eq:decoupled We^w Sigma_g degree -chi}
  \end{align}
\end{itemize}

Again, in order for the decoupled solution to define a smooth metric, it is necessary to assume conditions \ref{C1}--\ref{C3}.

\begin{proposition}[$\Sigma_{\ttg}$ base, $\ttg>1$]\label{prop:range for k Sigma_g}
    The decoupled solution defines a smooth metric only if either
    \begin{itemize}
        \item $k\in  [-\infty,-\frac{1}{\sqrt[3]{96}})\cup (\frac{1}{\sqrt[3]{48}},\infty]$ and $deg<-\chi \frac{96k^3+1}{192k^3-1}$;
        \item $k=\frac{1}{\sqrt[3]{48}}$, $deg=-\chi$, $\mathfrak{a}>0$ is arbitrary;
        \item $k\in (\frac{1}{\sqrt[3]{192}},\frac{1}{\sqrt[3]{48}})$ and $deg>-\chi \frac{96k^3+1}{192k^3-1}$.
       
    \end{itemize}
\end{proposition}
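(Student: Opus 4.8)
The plan is to run the same two-step argument as in the proof of Proposition~\ref{prop:range for k T2}. Throughout, $\chi=2-2\ttg<0$ is fixed and $\mathrm{Vol}_{\ttg}=-2\pi\chi$. First I would translate the a priori necessary conditions \ref{C1}--\ref{C2} into explicit inequalities on the pair $(k,deg)$; then I would show that these inequalities already force \ref{C3}, i.e. that the expression \eqref{eq:decoupled e^w Sigma_g} for $e^w$ is positive on $[0,\tfrac12)$ with a simple zero at $\xi=\tfrac12$.

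I would first dispose of the two special situations $deg=-\chi$ and $k=\pm\infty$. When $deg=-\chi$ (so $k=\tfrac1{\sqrt[3]{48}}$, $\mpp=\tfrac{2\pi}{3}$), formulas \eqref{eq:decoupled e^w Sigma_g degree -chi}--\eqref{eq:decoupled We^w Sigma_g degree -chi} exhibit $We^w$ as affine in $\xi$ with positive endpoint values (using $\ma>0$ and $\chi<0$), while the second factor of $e^w=\tfrac1{\mathrm{Vol}_{\ttg}}(1-2\xi)\bigl(\ma(1+2\xi+4\xi^2)-\tfrac{2\pi\chi}{3}\xi(2+\xi+2\xi^2)\bigr)$ is a sum of manifestly nonnegative terms, positive on $[0,\tfrac12)$; hence \ref{C1}--\ref{C3} hold for every $\ma>0$. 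When $k=\pm\infty$, the ASD formulas displayed after \eqref{eq:decoupled We^w Sigma_g} give $We^w\propto-\chi-2deg(1-\xi)$ and $e^w\propto(1-2\xi)\bigl(-\chi(1+2\xi)-2deg\bigr)$, so positivity on $[0,\tfrac12]$ amounts to positivity of the relevant endpoint values, which using $\chi<0$ reduces to $deg<-\chi/2$ — the $k=\pm\infty$ value of $-\chi\tfrac{96k^3+1}{192k^3-1}$ — and one checks $\mpp=\pi>0$, $\ma=-\pi(deg+\tfrac\chi2)>0$.

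For the remaining case $deg\neq-\chi$, $k\neq\pm\infty$, the parameters $\mpp,\ma$ are fixed by \eqref{eq:decoupled p and a for g>1}, and by \eqref{eq:integral We^w} the function $We^w=\tfrac1{\mathrm{Vol}_{\ttg}}\bigl((deg\cdot\mpp)\xi+\ma\bigr)$ is affine in $\xi$; thus \ref{C2} is equivalent to $\ma>0$ and $\tfrac12 deg\cdot\mpp+\ma>0$. Adjoining $\mpp>0$ from \ref{C1}, substituting \eqref{eq:decoupled p and a for g>1}, and clearing denominators while tracking the signs of $48k^3-1$, $192k^3-1$ and $1+96k^3$, I expect to recover exactly the first and third parameter regimes of the statement. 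It then remains to deduce \ref{C3}. Write $e^w=c\,(1-2\xi)\,B(\xi)$ as in \eqref{eq:decoupled e^w Sigma_g}, where $c=\tfrac{2\pi}{\mathrm{Vol}_{\ttg}(48k^3-1)(1+96k^3)}$ and, after expanding $(1+2\xi)(1-8\xi+4\xi^2)$,
\[
B(\xi)=-\chi(1+96k^3)Q(\xi)-2deg\,A(\xi),\qquad Q(\xi):=\xi^2+12k^3(1+2\xi),\qquad A(\xi):=1152k^6-6k^3(8\xi^3-12\xi^2-6\xi+1)+\xi^3,
\]
so $A$ is exactly the cubic already analyzed in Proposition~\ref{prop:range for k T2}. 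Since $1-2\xi>0$ on $[0,\tfrac12)$ and $cB(0)=e^w(0)=We^w(0)=\ma/\mathrm{Vol}_{\ttg}>0$ by \ref{C1}, it suffices to prove that $B$ has no zero on $(0,\tfrac12]$: then $B$ keeps its (necessarily correct) sign on $[0,\tfrac12]$, so $e^w>0$ on $[0,\tfrac12)$ and $B(\tfrac12)\neq0$ makes the zero at $\xi=\tfrac12$ simple.

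Showing that the cubic $B$ has no zero in $(0,\tfrac12]$ is the main obstacle. When the coefficients $-c\chi(1+96k^3)$ and $-2c\,deg$ of $Q$ and $A$ in $cB$ are both nonnegative — which covers $k>\tfrac1{\sqrt[3]{48}}$ or $k<-\tfrac1{\sqrt[3]{96}}$ together with $deg\le0$ — one concludes immediately, since $Q$ has constant sign on $[0,\tfrac12]$ in each $k$-range (matching that of its coefficient) and $A>0$ on $[0,\tfrac12)$ by Proposition~\ref{prop:range for k T2}. In the remaining subcases the two terms clash in sign, and here the strict bound $deg\lessgtr-\chi\tfrac{96k^3+1}{192k^3-1}$ is genuinely used: substituting the extremal value of $deg$ and dividing through reduces the desired inequality to $(192k^3-1)Q(\xi)-2A(\xi)\ge0$ (resp. $\le0$) on $[0,\tfrac12]$. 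The crucial observation is that this auxiliary cubic factors as $(48k^3-1)\,\xi\,\bigl(2\xi^2+\xi+96k^3\bigr)$, whose sign on $(0,\tfrac12]$ is then transparent in each of the three $k$-ranges — playing here the role that the monotonicity of $A$ played in Cases~1 and~2 of Proposition~\ref{prop:range for k T2}. The only genuinely delicate point is the sign bookkeeping over these finitely many $(k,deg)$ subcases.
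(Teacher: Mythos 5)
Your proposal is correct, and its overall skeleton --- dispatching $deg=-\chi$ and $k=\pm\infty$ directly, translating \ref{C1}--\ref{C2} via \eqref{eq:decoupled p and a for g>1} into the stated ranges of $(k,deg)$, and then checking that these ranges force \ref{C3} --- is the same as the paper's. Where you genuinely diverge is in the \ref{C3} step. The paper writes $e^w\propto(1-2\xi)\bigl(-\chi\,p(\xi)-2\,deg\,q(\xi)\bigr)$ with $p=(1+96k^3)Q$ and $q=A$, proves $p>0$ on $[0,\tfrac12]$, computes $\tfrac{d}{d\xi}(q/p)$ in closed form to show the ratio is monotone, and evaluates $1-2\tfrac{deg}{-\chi}\tfrac{q}{p}$ at the extremal endpoint. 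You instead use that $B(\xi)=-\chi(1+96k^3)Q(\xi)-2\,deg\,A(\xi)$ is affine in $deg$ with slope $-2A<0$ (the positivity of $A$ on $[0,\tfrac12]$ for the three relevant $k$-ranges being exactly what the proof of Proposition~\ref{prop:range for k T2} established, independently of $deg$), substitute the threshold value $deg^*=-\chi\tfrac{96k^3+1}{192k^3-1}$, and invoke the identity
\[
(192k^3-1)\,Q(\xi)-2A(\xi)=(48k^3-1)\,\xi\,\bigl(2\xi^2+\xi+96k^3\bigr),
\]
which I have verified by expansion. This replaces the quotient-rule computation and endpoint evaluation of $q/p$ by an explicit factorization whose sign on $(0,\tfrac12]$ is read off exactly as the paper reads off the sign of $Q$ (positive for $k>0$; negative for $k<-\tfrac1{\sqrt[3]{96}}$ since the quadratic is convex and negative at both endpoints); combined with strict monotonicity in $deg$ and $cB(0)=\ma/\mathrm{Vol}_{\ttg}>0$, it yields the same conclusion, arguably more cleanly. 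Two minor points to fix in the write-up: your sentence ``the coefficients $-c\chi(1+96k^3)$ and $-2c\,deg$ \dots are both nonnegative'' is literally false when $k<-\tfrac1{\sqrt[3]{96}}$, where the coefficient of $Q$ is negative --- what you mean, and what your parenthetical correctly supplies, is that each coefficient has the same sign as the factor it multiplies, so each \emph{product} is nonnegative; and the reduction of \ref{C1}--\ref{C2} to the stated $(k,deg)$ ranges is asserted rather than carried out, though it is the same routine sign bookkeeping via $\ma=-\tfrac{\chi(2+\frac{1}{48k^3})+deg(4-\frac{1}{48k^3})}{2(1-\frac{1}{48k^3})(2+\frac{1}{48k^3})}$ that the paper performs.
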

\begin{proof}
  We omit the case $k=\pm\infty$. The case $k=\tfrac{1}{\sqrt[3]{48}}$ is also straightforward from \eqref{eq:decoupled e^w Sigma_g degree -chi}--\eqref{eq:decoupled We^w Sigma_g degree -chi}.

  Next, we assume $k\neq\frac{1}{\sqrt[3]{48}}$ and $k\neq\pm\infty$. Since $\mathfrak{p}>0$, this implies $k<-\frac{1}{\sqrt[3]{96}}$ or $k>0$. The condition \ref{C2} is equivalent to $\ma>0$ and $\frac{1}{2}deg\cdot\mpp+\ma>0$, where the latter is equivalent to  
  $$
  -\frac{\chi+deg}{1-\frac{1}{48k^3}}>0.
  $$

  \textbf{Case 1:} $k>\frac{1}{\sqrt[3]{48}}$ or $k<-\frac{1}{\sqrt[3]{96}}$.

  We have $deg<-\chi$. Since $\ma=-\frac{\chi(2+\frac{1}{48k^3})+deg(4-\frac{1}{48k^3})}{2(1-\frac{1}{48k^3})(2+\frac{1}{48k^3})}$, we know $\ma>0$ if and only if $deg<-\chi \frac{96k^3+1}{192k^3-1}$. In the following, we verify that this condition also implies \ref{C3}. Let  
  $$
  A(\xi):=-\chi(1 + 96k^3)\bigl( \xi^2 + 12k^3 (1 + 2\xi) \bigr) - 2\,deg\bigl( 1152k^6 + \xi^3 - 6k^3 (1 + 2\xi)(1-8\xi+4\xi^2) \bigr).
  $$
  The condition \ref{C3} is equivalent to $A(\xi)>0$ on $[0,\frac{1}{2}]$. Denote $p(\xi):=(1 + 96k^3)\bigl( \xi^2 + 12k^3 (1 + 2\xi) \bigr)$ and $q(\xi)=1152k^6 + \xi^3 - 6k^3 (1 + 2\xi)(1-8\xi+4\xi^2)$. Then  
  \[
  A(\xi)
  = -\chi\,p(\xi) - 2\,deg\,q(\xi)
  = -\chi\,p(\xi)\!\left(1 - 2\,\frac{deg}{-\chi}\,\frac{q(\xi)}{p(\xi)}\right).
  \]
  We claim that $p(\xi)>0$ on $[0, \tfrac{1}{2}]$. This is immediate when $k > \tfrac{1}{\sqrt[3]{48}}$. For $k < -\tfrac{1}{\sqrt[3]{96}}$, this holds because $1 + 96k^3 < 0$, and $\xi^2 + 12k^3(1 + 2\xi)$ is negative on $[0, \tfrac{1}{2}]$, since it is convex and negative at both endpoints.
  As for $1-2\frac{deg}{-\chi}\frac{q(\xi)}{p(\xi)}$, we have  
  $$
  \frac{d}{d\xi}\left(\frac{q(\xi)}{p(\xi)}\right)
  = -\,\frac{(48 k^3 - 1)(12 k^3 + \xi^3)\,(48 k^3 + \xi)}{(96 k^3 + 1)\,\bigl(12 k^3 + 24 k^3 \xi + \xi^2\bigr)^2}
  < 0.
  $$
  Thus, $\frac{q(\xi)}{p(\xi)}$ is decreasing, and  
  $\max_{\xi\in[0,\frac{1}{2}]}\frac{q(\xi)}{p(\xi)}=\frac{q(0)}{p(0)}=\frac{192k^3-1}{2(96k^3+1)}$,  
  $\min_{\xi\in[0,\frac{1}{2}]}\frac{q(\xi)}{p(\xi)}=\frac{q(\frac{1}{2})}{p(\frac{1}{2})}=\frac{1}{2}$.  
  Hence,  
  $$
  \min_{\xi\in[0,\frac12]}\!\left( 1 - 2\,\frac{deg}{-\chi}\,\frac{q(\xi)}{p(\xi)} \right)
  =
  \begin{cases}
    \displaystyle 1 - \frac{deg}{-\chi}\,\frac{192k^{3}-1}{96k^{3}+1}, & deg>0, \\[8pt]
    \displaystyle 1 - \frac{deg}{-\chi}, & deg\le 0,
  \end{cases}
  $$
  which is positive if and only if $deg<-\chi \frac{96k^3+1}{192k^3-1}$.

  \medskip

  \textbf{Case 2:} $0<k<\frac{1}{\sqrt[3]{48}}$. 

  Here $deg>-\chi$. As $\ma=-\frac{\chi(2+\frac{1}{48k^3})+deg(4-\frac{1}{48k^3})}{2(1-\frac{1}{48k^3})(2+\frac{1}{48k^3})}$, we know $\ma>0$ if and only if $k>\frac{1}{\sqrt[3]{192}}$ and $deg>-\chi \frac{96k^3+1}{192k^3-1}$. Next, we verify that this condition implies \ref{C3}. Denote $A(\xi), p(\xi), q(\xi)$ as in \textbf{Case 1}. The condition \ref{C3} is equivalent to $A(\xi)<0$ on $[0,\frac{1}{2}]$. In this case, $\frac{q(\xi)}{p(\xi)}$ is increasing; thus,  
  $\min_{\xi\in[0,\frac{1}{2}]}\frac{q(\xi)}{p(\xi)}=\frac{q(0)}{p(0)}=\frac{192k^3-1}{2(96k^3+1)}$, and  
  $$
  \max\limits_{\xi\in[0,\frac{1}{2}]}\!\left(1-2\frac{deg}{-\chi}\frac{q(\xi)}{p(\xi)}\!\right)
  =1-\frac{deg}{-\chi}\frac{192k^3-1}{96k^3+1},
  $$
  which is negative if and only if $k>\frac{1}{\sqrt[3]{192}}$ and $deg>-\chi\frac{96k^3+1}{192k^3-1}$.
\end{proof}
    It is worth noting that \eqref{eq:T2 decoupled period}–\eqref{eq:decoupled We^w T^2 degree 0} arise as the special case of \eqref{eq:decoupled p and a for g>1}–\eqref{eq:decoupled We^w Sigma_g degree -chi} obtained by setting $\chi=0$.

\subsection{Decoupled solutions with base \texorpdfstring{$S^2$}{}}
\label{subsec:S2 decouple}

In this situation, there are two more cases: the bolt case and the nut case. In the bolt case, the fixed point set of the Killing field is $\Xi\simeq S^2$, while in the nut case the fixed point set is an isolated point.

For the bolt case, the underlying manifold $M$ is the total space of a complex line bundle over $S^2$. The functions $e^w, We^w$ are still given by \eqref{eq:decoupled e^w Sigma_g}-\eqref{eq:decoupled We^w Sigma_g} when $deg\neq-\chi$, and \eqref{eq:decoupled e^w Sigma_g degree -chi}-\eqref{eq:decoupled We^w Sigma_g degree -chi} when $deg=-\chi$. Note that here $\chi=2$. Moreover, in the former case $\mpp,\ma$ are given by \eqref{eq:decoupled p and a for g>1}, while in the later case, $\mpp=\frac{2\pi}{3}$ and $\ma>0$. Again, they define a smooth metric on $M$ only if conditions \ref{C1}-\ref{C3} hold. Similar to Proposition \ref{prop:range for k Sigma_g}, we have the following 

\begin{proposition}[$S^2$ base, bolt case]\label{prop:range for k S2 bolt}
       The decoupled solution defines a smooth metric only if either
    \begin{itemize}
        \item $k\in  [-\infty,-\frac{1}{\sqrt[3]{96}})\cup (\frac{1}{\sqrt[3]{48}},\infty]$ and $deg<-\chi $;
        \item $k=\frac{1}{\sqrt[3]{48}}$, $deg=-\chi$, $\mathfrak{a}>\frac{\pi\chi}{3}$ is arbitrary;
        \item $k\in [\frac{1}{\sqrt[3]{192}},\frac{1}{\sqrt[3]{48}})$ and $deg>-\chi$;
        \item  $k\in (0,\frac{1}{\sqrt[3]{192}})$ and $-\chi<deg<-\chi \frac{96k^3+1}{192k^3-1}$.
    \end{itemize}
\end{proposition}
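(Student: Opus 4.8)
The plan is to repeat the argument of Proposition~\ref{prop:range for k Sigma_g} with $\chi=2$, being careful about the sign reversals this causes. First I would dispose of the two exceptional values of $k$. When $k=\pm\infty$ the anti-self-dual formulas $e^{w}=\frac{\pi}{2\mathrm{Vol}_{\ttg}}(1-2\xi)(-\chi(1+2\xi)-2deg)$ and $We^{w}=\frac{\pi}{2\mathrm{Vol}_{\ttg}}(-\chi-2deg(1-\xi))$ are elementary, and \ref{C1}--\ref{C3} reduce directly to $deg<-\chi$. When $k=\tfrac{1}{\sqrt[3]{48}}$, so that $deg=-\chi$, one uses \eqref{eq:decoupled e^w Sigma_g degree -chi}--\eqref{eq:decoupled We^w Sigma_g degree -chi}; since $We^w=\frac{1}{\mathrm{Vol}_{\ttg}}(\mathfrak{a}-\tfrac23\pi\chi\xi)$ is linear and now \emph{decreasing} in $\xi$ (because $\chi>0$), condition \ref{C2} becomes $\mathfrak{a}>\tfrac{\pi\chi}{3}$, while the remaining quadratic factor of $e^w$ is manifestly positive on $[0,\tfrac12)$.

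For $k\neq\pm\infty$ with $k\neq\tfrac{1}{\sqrt[3]{48}}$, the constraint $\mathfrak{p}>0$ in \eqref{eq:decoupled p and a for g>1} forces $k<-\tfrac{1}{\sqrt[3]{96}}$ or $k>0$. Since $We^w$ from \eqref{eq:decoupled We^w Sigma_g} is linear in $\xi$, condition \ref{C2} is exactly positivity of $We^w$ at $\xi=0$ and $\xi=\tfrac12$, i.e.\ $\mathfrak{a}>0$ together with $\tfrac12 deg\cdot\mathfrak{p}+\mathfrak{a}>0$; substituting the explicit $\mathfrak{p},\mathfrak{a}$, the second inequality becomes $-\frac{\chi+deg}{1-\frac{1}{48k^3}}>0$ and the first becomes a one-sided bound on $deg$ involving $-\chi\frac{96k^3+1}{192k^3-1}$. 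I would then split according to the signs of $1-\frac{1}{48k^3}$ and $4-\frac{1}{48k^3}$ into three subcases: (i) $k>\tfrac{1}{\sqrt[3]{48}}$ or $k<-\tfrac{1}{\sqrt[3]{96}}$, where the two conditions combine to $deg<-\chi$ (the $-\chi\frac{96k^3+1}{192k^3-1}$ bound being weaker here, since $\frac{96k^3+1}{192k^3-1}\in(0,1)$); (ii) $\tfrac{1}{\sqrt[3]{192}}\le k<\tfrac{1}{\sqrt[3]{48}}$, giving $deg>-\chi$, with $\mathfrak{a}>0$ automatic at the endpoint $k=\tfrac{1}{\sqrt[3]{192}}$ precisely because $\chi>0$; and (iii) $0<k<\tfrac{1}{\sqrt[3]{192}}$, a window empty in Proposition~\ref{prop:range for k Sigma_g} but now admissible, giving $-\chi<deg<-\chi\frac{96k^3+1}{192k^3-1}$.

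It then remains to verify that in each subcase \ref{C1}--\ref{C2} already force \ref{C3}. Writing $e^w=\frac{2\pi(1-2\xi)}{\mathrm{Vol}_{\ttg}(48k^3-1)(1+96k^3)}A(\xi)$ with $A=-\chi p-2deg\,q$ and $p,q$ as in the proof of Proposition~\ref{prop:range for k Sigma_g}, the factor $1-2\xi$ supplies the required simple zero at $\xi=\tfrac12$, so \ref{C3} amounts to asking that $A$ have constant sign on $[0,\tfrac12]$ equal to that of $\frac{2\pi}{\mathrm{Vol}_{\ttg}(48k^3-1)(1+96k^3)}$. One checks $p>0$ on $[0,\tfrac12]$ throughout the relevant $k$-ranges, and that $q/p$ is monotone with endpoint values $\frac{q(0)}{p(0)}=\frac{192k^3-1}{2(96k^3+1)}$ and $\frac{q(1/2)}{p(1/2)}=\tfrac12$; hence $A=-\chi\,p\left(1-2\tfrac{deg}{-\chi}\tfrac{q}{p}\right)$, whose first factor has constant sign, keeps a fixed sign iff $1-2\tfrac{deg}{-\chi}\tfrac{q}{p}$ does, and the latter is determined by its value at the extremal endpoint of the monotone function $q/p$ — which is exactly what the $deg$-bound from \ref{C2} pins down, and it is nonzero at $\xi=\tfrac12$, confirming the simple zero there. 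The main obstacle is purely bookkeeping: with $\chi>0$ one must keep track of the signs of $48k^3-1$, $96k^3+1$, $192k^3-1$, of $p$, and the direction of monotonicity of $q/p$ across the three $k$-windows, since several inequalities reverse relative to the $\ttg>1$ case and the extra window $0<k<\tfrac{1}{\sqrt[3]{192}}$ requires separate handling; there is no new analytic ingredient beyond Proposition~\ref{prop:range for k Sigma_g}.
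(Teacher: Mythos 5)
Your proposal is correct and follows essentially the same route as the paper's proof: the same conditions \ref{C1}--\ref{C3}, the same factorization $A=-\chi p-2\,deg\,q$ with $p>0$ and $q/p$ monotone between the endpoint values $\tfrac{192k^3-1}{2(96k^3+1)}$ and $\tfrac12$, and the same case split in $k$ with the sign reversals caused by $\chi=2>0$ handled as you describe. The only step needing one more line is the $k=\tfrac{1}{\sqrt[3]{48}}$ case, where the cubic (not quadratic) factor of $e^w$ is positive on $[0,\tfrac12]$ not manifestly but via the identity $\mathfrak{a}(1+2\xi+4\xi^2)-\tfrac{2\pi\chi}{3}\xi(2+\xi+2\xi^2)=\bigl(\mathfrak{a}-\tfrac{\pi\chi}{3}\bigr)(1+2\xi+4\xi^2)+\tfrac{\pi\chi}{3}(1-2\xi)(1+2\xi^2)$, which is exactly where the threshold $\mathfrak{a}>\tfrac{\pi\chi}{3}$ enters.
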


\begin{proof}
  We divide our discussion into the following cases.

  \medskip

  \textbf{Case 0:} $k=\frac{1}{\sqrt[3]{48}}$.

  Here $deg=-\chi$, and $e^w, We^w$ are given by \eqref{eq:decoupled e^w Sigma_g degree -chi}--\eqref{eq:decoupled We^w Sigma_g degree -chi}. Now \ref{C1} is equivalent to $\mathfrak{a}>0$, \ref{C2} is equivalent to $\mathfrak{a}>\frac{\pi\chi}{3}$, and \ref{C3} is equivalent to 
  $$
  \mathfrak{a}(1+2\xi+4\xi^2)-\tfrac{2\pi\chi}{3}\,\bigl(2\xi+\xi^2+2\xi^3\bigr)
  =\Bigl(\mathfrak{a}-\tfrac{\pi\chi}{3}\Bigr)(1+2\xi+4\xi^2)+\tfrac{\pi\chi}{3}(1-2\xi)(1+2\xi^2)>0.
  $$
  Hence, \ref{C1}--\ref{C3} together are equivalent to $\mathfrak{a}>\frac{\pi\chi}{3}$.

  \medskip

  Next, we assume $k\neq\frac{1}{\sqrt[3]{48}}$. Denote $A(\xi), p(\xi), q(\xi)$ as in Proposition \ref{prop:range for k Sigma_g}. Again, \ref{C1}--\ref{C2} are equivalent to $k<-\frac{1}{\sqrt[3]{96}}$ or $k>0$, together with $\ma>0$ and $-\frac{\chi+deg}{1-\frac{1}{48k^3}}>0$.

  \medskip

  \textbf{Case 1:} $k>\frac{1}{\sqrt[3]{48}}$ or $k<-\frac{1}{\sqrt[3]{96}}$.

  We have $deg<-\chi$. Since $\ma=-\frac{\chi(2+\frac{1}{48k^3})+deg(4-\frac{1}{48k^3})}{2(1-\frac{1}{48k^3})(2+\frac{1}{48k^3})}$, we know $\ma>0$ if and only if $deg<-\chi \frac{96k^3+1}{192k^3-1}$. One has $p(\xi)>0$ on $[0, \tfrac{1}{2}]$, 
  with the proof identical to that in Proposition~\ref{prop:range for k Sigma_g}. Now \ref{C3} is equivalent to $A(\xi)=-\chi p(\xi)\bigl(1+2\frac{deg}{\chi}\frac{q(\xi)}{p(\xi)}\bigr)>0$ on $[0,\frac12]$. 
  One again has $\frac{d}{d\xi}\frac{q(\xi)}{p(\xi)}<0$ on $[0,\frac12]$. 
  Hence, $A(\xi)>0$ is equivalent to
  $$
  \max\limits_{\xi\in[0,\frac{1}{2}]}\!\left(1+2\frac{deg}{\chi}\frac{q(\xi)}{p(\xi)}\!\right)
  =1+2\frac{deg}{\chi}\frac{q(\frac{1}{2})}{p(\frac{1}{2})}
  =1+\frac{deg}{\chi}<0.
  $$
  Because $1> \frac{96k^3+1}{192k^3-1}$, we conclude that condition \ref{C3} is equivalent to $deg<-\chi$.

  \medskip

  \textbf{Case 2:} $0<k<\frac{1}{\sqrt[3]{48}}$. 

  We have $deg>-\chi$. Since $\ma=-\frac{\chi(2+\frac{1}{48k^3})+deg(4-\frac{1}{48k^3})}{2(1-\frac{1}{48k^3})(2+\frac{1}{48k^3})}$, we know $\ma>0$ if and only if $deg>-\chi \frac{96k^3+1}{192k^3-1}$. As before, one can check $p(\xi)>0$ on $[0,\frac12]$. Now \ref{C3} is equivalent to $A(\xi)=-\chi p(\xi)\bigl(1+2\frac{deg}{\chi}\frac{q(\xi)}{p(\xi)}\bigr)<0$ on $[0,\frac12]$. 
  Again, we have $\frac{d}{d\xi}\frac{q(\xi)}{p(\xi)}>0$ on $[0,\frac12]$. Thus, $\frac{q(\xi)}{p(\xi)}$ is increasing, and 
  $\min_{\xi\in[0,\frac{1}{2}]}\frac{q(\xi)}{p(\xi)}=\frac{q(0)}{p(0)}=\frac{192k^3-1}{2(96k^3+1)}$, 
  $\max_{\xi\in[0,\frac{1}{2}]}\frac{q(\xi)}{p(\xi)}=\frac{g(\frac{1}{2})}{f(\frac{1}{2})}=\frac{1}{2}$. 
  Hence, $A(\xi)<0$ is equivalent to
  $$
  0<\min_{\xi\in[0,\frac12]}\!\left( 1 +2\,\frac{deg}{\chi}\,\frac{q(\xi)}{p(\xi)} \right)
  =
  \begin{cases}
    \displaystyle 1 + \frac{deg}{\chi}\,\frac{192k^{3}-1}{96k^{3}+1}, & deg>0, \\[8pt]
    \displaystyle 1 +\frac{deg}{\chi}, & deg\le 0.
  \end{cases}
  $$ 
  Divide into the two subcases $k\geq \frac{1}{\sqrt[3]{192}}$ and $0<k<\frac{1}{\sqrt[3]{192}}$. We conclude that if $k\geq \frac{1}{\sqrt[3]{192}}$, then condition \ref{C3} is equivalent to $deg>-\chi$. If $0<k<\frac{1}{\sqrt[3]{192}}$, then condition \ref{C3} is equivalent to $-\chi<deg<-\chi\frac{96k^3+1}{192k^3-1}$. This finishes the proof.
\end{proof}

\medskip

Now we consider the nut case. In this case, in order for the solutions to define a smooth metric away from the isolated point $\xi=\frac{1}{2}$, we need
\begin{enumerate}
    \item  $\mathfrak{p}>0, \ \mathfrak{a}>0$,
    \item  $We^w>0$ on $[0,\frac12)$, $We^w=0$ at $\xi=\frac{1}{2}$,
    \item  $e^w>0$ on $[0,\frac12)$, $e^w(\frac12)=0$,
\end{enumerate}
since as we approach $\xi=\frac{1}{2}$, the K\"ahler reduction shrinks to the nut point.
As $We^w=e^w=0$ at $\frac{1}{2}$, we have 
$$\frac{1}{2}deg \cdot\mathfrak{p}+\mathfrak{a}=0$$ 
and  \eqref{eq:e^w vanishes at 1/2}. Hence, $k\neq-\frac{1}{\sqrt[3]{96}}$ and 
$$\mathfrak{a}=\frac{48\pi\chi k^3}{96k^3+1}, \quad \mathfrak{p}=-\frac{2\mathfrak{a}}{deg}=\frac{1}{-deg}\frac{96\pi \chi k^3}{96k^3+1}.$$
The first condition implies $k\in  [-\infty,-\frac{1}{\sqrt[3]{96}})\cup (\frac{1}{\sqrt[3]{48}},\infty]$, and $deg<0$. Further computations gives 
$$e^w=\frac{(24k^3+\xi^2)\,(1-2\xi)^2}{96k^3+1},$$
$$We^w=\frac{24\,k^3}{96k^3+1}\,(1-2\xi),$$
$$W^{-1}=(1-2\xi)\!\left(1+\frac{\xi^2}{24k^3}\right).$$
Thus $\frac12$ is a simple zero of $We^w$, is a double zero of $e^w$, and is a simple zero of $W^{-1}$. As a consequence, we have
\begin{equation}\label{eq:Nut metric on B4}
    g
=\frac{1}{1-2\xi}\,\frac{24k^{3}}{24k^{3}+\xi^{2}}\,d\xi^{2}
+(1-2\xi)\!\left(\frac{24k^{3}+\xi^{2}}{24k^{3}}\,\eta^{2}
+\frac{24\,k^{3}}{96k^{3}+1}\,g_{S^{2}}\right).
\end{equation}
Denote $\beta=\frac{96k^3}{96k^3+1}>0$, $r=\sqrt{1-2\xi}\in (0,1]$. It follows that as $r\rightarrow 0^+$,
$$g\sim \beta(dr^2+r^2(\beta^{-2}\eta^2+\frac{1}{4}g_{S^2}))$$
Thus, for $g$ to define a smooth metric, it is necessary that $\frac{\mathfrak{p}}{\beta} = 2\pi$, that is, $deg = -1$. This agrees with the fact that, in the case of $B^4$, the underlying $\mathbb{S}^1$-bundle is the Hopf fibration.
 The metric \eqref{eq:Nut metric on B4} is relatively simple and one can check that it defines a smooth metric on $B^4$.

\begin{proposition}[$S^2$ base, nut case]\label{prop:range for k S2 nut}
    The decoupled solution defines a smooth metric, if and only if the base is $S^2$, $deg=-1$, $g$ is defined on $\overline{B^4}$  given by  the completion of \eqref{eq:Nut metric on B4}, and $k\in [-\infty,-\frac{1}{\sqrt[3]{96}})\cup (0,\infty]$.
\end{proposition}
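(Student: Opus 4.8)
The plan is to organize the explicit computations recorded just above the statement into a single iff; the genuinely new content is the exact range of $k$ and the smooth closure of \eqref{eq:Nut metric on B4} at the nut. For necessity, assuming the decoupled solution defines a smooth metric, I first note that the fixed locus is an isolated point, so near it $M$ is modeled on $\mathbb{C}^2$ with a diagonal $\bS^1$-action and its K\"ahler quotients are copies of $S^2$; since the quotient $\Sigma$ is connected this gives $\Sigma=S^2$ and $\chi=2$. At the nut both the base $S^2$ and the $\eta$-circle collapse, forcing $We^w(\tfrac12)=0$ and $e^w(\tfrac12)=0$, i.e.\ $\tfrac12\,deg\cdot\mpp+\ma=0$ together with \eqref{eq:e^w vanishes at 1/2}. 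Solving this pair of linear relations for $(\ma,\mpp)$ forces $96k^3+1\neq0$ and yields $\ma=\tfrac{96\pi k^3}{96k^3+1}$ and $\mpp=-2\ma/deg$, hence the closed forms of $e^w$, $We^w$, $W^{-1}$ displayed above. Now $\ma>0$ is equivalent to $k^3/(96k^3+1)>0$, i.e.\ $k\in[-\infty,-\tfrac1{\sqrt[3]{96}})\cup(0,\infty]$, and $\ma>0$ with $\tfrac12\,deg\cdot\mpp+\ma=0$ forces $deg<0$; on this $k$-range, and only there, one reads off from the formulas that $We^w=\tfrac{24k^3}{96k^3+1}(1-2\xi)$ and $e^w=\tfrac{(24k^3+\xi^2)(1-2\xi)^2}{96k^3+1}$ are positive on $[0,\tfrac12)$ with $e^w$ vanishing to order two at $\tfrac12$, and $W,W^{-1}>0$ on $(0,\tfrac12)$.

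To finish necessity I would show that smoothness at the nut itself forces $deg=-1$. Writing $\beta=\tfrac{96k^3}{96k^3+1}>0$ (so $\ma=\pi\beta$, $\mpp=-2\pi\beta/deg$) and $r=\sqrt{1-2\xi}$, one has $g\sim\beta\bigl(dr^2+r^2(\beta^{-2}\eta^2+\tfrac14 g_{S^2})\bigr)$ as $r\to0^+$, so the link of the nut is the metric $\beta^{-2}\eta^2+\tfrac14 g_{S^2}$ on the $\bS^1$-bundle $\{\xi=\mathrm{const}\}\to S^2$. For $g$ to close up smoothly — rather than with a cone or orbifold singularity — this link must be the unit round $S^3$, which happens exactly when $\beta^{-1}\eta$ is the Hopf connection $1$-form with circle length $2\pi$, i.e.\ $\mpp/\beta=2\pi$, equivalently $deg=-1$. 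Hence the bundle is the Hopf bundle, $M$ is diffeomorphic to $B^4$ with closure $\overline{B^4}$, and $(deg,\ma,\mpp)=(-1,\pi\beta,2\pi)$ pins $g$ down to be exactly the completion of \eqref{eq:Nut metric on B4}.

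For the converse I would fix $deg=-1$ and $k\in[-\infty,-\tfrac1{\sqrt[3]{96}})\cup(0,\infty]$, take $g$ to be the completion of \eqref{eq:Nut metric on B4}, and verify smoothness in three steps: on $(0,\tfrac12)\times S^3$ the coefficients $W$, $W^{-1}$, $We^w$ are smooth and positive by the formulas, so $g$ is a smooth Riemannian metric there; as $\xi\to0$ one has $W\to1$ and $We^w\to\tfrac\beta4>0$, so $g$ extends smoothly up to the boundary carrying the Berger sphere $\eta^2+\tfrac\beta4 g_{S^2}$; and as $\xi\to\tfrac12$, by the asymptotics above with $\mpp/\beta=2\pi$ the link is the genuine round $S^3$, so $g$ extends smoothly across the nut, yielding a smooth metric on $\overline{B^4}$. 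Finally $h:=\xi^{-2}g$ is Einstein since the decoupled $w$ solves the reduced equation \eqref{eq:twisted toda final} for finite $k$ (its $k=\pm\infty$ degeneration being \eqref{eq:Type I Toda}--\eqref{eq:Type I W}), and $\xi$ is a smooth defining function with $|\nabla_g\xi|_g^2=W^{-1}\to1$ at $\xi=0$, so $(M,h)$ is Poincar\'e--Einstein.

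The main obstacle will be the last smoothness step: confirming that the link $\{\xi=\mathrm{const}\}$ near the maximum is the standard round $S^3$, and not a lens space or a cone over a Berger sphere — this is precisely what singles out $deg=-1$ through $\mpp/\beta=2\pi$. Beyond that only minor care is needed: the base $S^2$ shrinks ($k\to0^+$) or blows up ($k\to-\tfrac1{\sqrt[3]{96}}^-$) only in the excluded limits, and at $k=\pm\infty$ the metric $g$ degenerates to the flat ball and $h$ to the hyperbolic ball, which should be recorded separately.
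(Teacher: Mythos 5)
Your argument is correct and follows essentially the same route as the paper: the vanishing conditions $\tfrac12\,deg\cdot\mpp+\ma=0$ together with \eqref{eq:e^w vanishes at 1/2} pin down $\ma=\pi\beta$ and $\mpp=-2\pi\beta/deg$, positivity of $\ma$, $We^w$ and $e^w$ yields exactly $k\in[-\infty,-\tfrac{1}{\sqrt[3]{96}})\cup(0,\infty]$ with $deg<0$, and the asymptotics $g\sim\beta\bigl(dr^2+r^2(\beta^{-2}\eta^2+\tfrac14 g_{S^2})\bigr)$ force $\mpp/\beta=2\pi$, i.e.\ $deg=-1$. The only place you go beyond the paper is the sufficiency direction, which the paper dismisses with ``one can check''; your three-step verification (interior, conformal boundary, nut) is exactly what is meant, the key point being that all coefficients are polynomial in $\xi=\tfrac12-\tfrac14|\mathbf{z}|^2$ and hence smooth even functions of $r$ near the nut.
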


\begin{remark}
In our case $g|_{\xi=0}=\eta^2+ \tfrac{\beta}{4}g_{S^2}$ can be written as $\beta^2 \sigma_1^2+\beta (\sigma_2^2+\sigma_3^2)$, where $\sigma_1,\sigma_2,\sigma_3$ are a left-invariant orthonormal coframe of $(S^3,g_{S^3})$, which is a Berger sphere with $\beta\in (0,\infty)$. When $k\in (-\infty,-\frac{1}{\sqrt[3]{96}})$, one has $\beta\in (1,\infty)$; when $k=\pm\infty$, $\beta=1$; and when $k\in (0,\infty)$, $\beta\in (0,1)$. This shows that the conformal infinity of every Berger sphere can fill in a conformally Kähler Poincaré–Einstein metric, which recovers Pedersen's construction \cite{Pedersen1986}. One can show that the Kähler metric $g$ defined by \eqref{eq:Nut metric on B4} is moreover self-dual under the complex orientation.
\end{remark}

\begin{remark}
    Without assuming $deg=-1$, for any $d:=-deg>0$, the metric $g$ defined by \eqref{eq:Nut metric on B4} is a K\"ahler orbifold metric on $\overline{B^4}/\mathbb{Z}_{d}$. Moreover, on the complex line bundle $\mathcal{O}(-2)$, the K\"ahler metric $g$ determined by \eqref{eq:decoupled e^w Sigma_g degree -chi}–\eqref{eq:decoupled We^w Sigma_g degree -chi} converges smoothly, away from $\xi=\frac{1}{2}$, to the orbifold K\"ahler metric \eqref{eq:Nut metric on B4} with $k=\tfrac{1}{\sqrt[3]{48}}$ on $\overline{B^4}/\mathbb{Z}_2$, as $\mathfrak{a}\to \tfrac{2\pi}{3}^+$. Thus we obtain a sequence of Poincar\'e–Einstein metrics on $\mathcal{O}(-2)$ converging, in the pointed Gromov–Hausdorff sense, to a non-hyperbolic Poincar\'e–Einstein orbifold on $B^4/\mathbb{Z}_2$.
\end{remark}

\subsection{Regularity and conclusion}
\label{subsec:conclusion}
We first demonstrate that all the necessary conditions appearing in the bolt cases Proposition \ref{prop:range for k T2}-\ref{prop:range for k S2 bolt} are also sufficient. That is, in the bolt case, the decoupled solutions \eqref{eq:decoupled solutions w}–\eqref{eq:decoupled solutions W} satisfying $\frac{\mpp}{2A_0}=2\pi$ and conditions \ref{C1}–\ref{C3} give rise to smooth conformally Kähler Poincaré–Einstein metrics. To see this, we work in a local holomorphic coordinate $x+iy$ for the Riemann surface $\Sigma$, where the $\mathbb{S}^1$-bundle is trivialized. Fix the gauge by writing the 1-form $\eta=d\theta+\mathcal{X}dx+\mathcal{Y}dy$, with $\theta$ parameterizing the action induced by $\mathcal{K}$ and functions $\mathcal{X},\mathcal{Y}$ determined by \eqref{eq:d eta}. Introduce the coordinate change $\tau:=\sqrt{\mpp/\pi}(\frac{1}{2}-\xi)^{\frac{1}{2}}$, then set $x_1:=\tau\cos\theta,x_2:=\tau\sin\theta,x_3:=x,x_4:=y$.  It is direct to check that the Einstein metric $h$ under $(x_1,x_2,x_3,x_4)$ is Lipschitz around $\xi=1/2$. By standard elliptic regularity for Einstein metrics (see Proposition 4.15 of \cite{Li_Sun_2025} for example), it follows that there is a coordinate such that the Einstein metric $h$ is smooth and our conclusion follows.

Summarizing all bolt cases, the necessary and sufficient conditions can be formulated as the following \textit{admissibility} condition. The definition is separated into the case of genus greater than zero and the case of genus zero.

\begin{definition}\label{def:admissible five tuple}
    A five tuple $(deg,\chi,k,\mathfrak{a},\mathfrak{p})$, where $deg\in\mathbb{Z}$ and $\chi=2-2\ttg$ with $\ttg\in\mathbb{Z}_{>0}$, is said to be {admissible} if one of the following holds:
\begin{itemize}
  \item $deg \neq -\chi$, $k \neq \tfrac{1}{\sqrt[3]{48}}$, and  
  \[
  \mathfrak{p} = \frac{96\pi k^3}{96k^3+1}, 
  \qquad 
  \mathfrak{a} = -\frac{\pi(\chi+deg)}{2\left(1-\tfrac{1}{48k^3}\right)} 
                 - \frac{\pi \,deg}{2+\tfrac{1}{48k^3}}.
  \]  
  Moreover, either  
  \[
  k \in         \Bigl[-\infty,-\tfrac{1}{\sqrt[3]{96}}\Bigr)\cup \Bigl(\tfrac{1}{\sqrt[3]{48}},\infty\Bigr],
  \qquad 
  deg < -\chi \,\frac{96k^3+1}{192k^3-1},
  \]  
  or  
  \[
  k \in \Bigl(\tfrac{1}{\sqrt[3]{192}},\tfrac{1}{\sqrt[3]{48}}\Bigr),
  \qquad 
  deg > -\chi \,\frac{96k^3+1}{192k^3-1}.
  \]

  \item $deg = -\chi$, $k = \tfrac{1}{\sqrt[3]{48}}$, and  
  \[
  \mathfrak{p} = \tfrac{2\pi}{3}, 
  \qquad 
  \mathfrak{a} > 0 \ \text{ arbitrary}.
  \]
\end{itemize}
\end{definition}

\begin{definition}\label{def:admissible five tuple S2 case}
    A five tuple $(deg,\chi,k,\mathfrak{a},\mathfrak{p})$, where $deg\in\mathbb{Z}$ and $\chi=2$, is said to be {admissible} if one of the following holds:
\begin{itemize}
  \item $deg \neq -\chi$, $k \neq \tfrac{1}{\sqrt[3]{48}}$, and  
  \[
  \mathfrak{p} = \frac{96\pi k^3}{96k^3+1}, 
  \qquad 
  \mathfrak{a} = -\frac{\pi(\chi+deg)}{2\left(1-\tfrac{1}{48k^3}\right)} 
                 - \frac{\pi \,deg}{2+\tfrac{1}{48k^3}}.
  \]  
  Moreover, either  
  \[
  k \in         \Bigl[-\infty,-\tfrac{1}{\sqrt[3]{96}}\Bigr)\cup \Bigl(\tfrac{1}{\sqrt[3]{48}},\infty\Bigr],
  \qquad 
  deg < -\chi, 
  \]  
  or  
  \[
  k \in \Bigl[\tfrac{1}{\sqrt[3]{192}},\tfrac{1}{\sqrt[3]{48}}\Bigr),
  \qquad 
  deg > -\chi,
  \]
  or
   \[
  k \in \Bigl(0,\tfrac{1}{\sqrt[3]{192}}\Bigr),
  \qquad 
   -\chi<deg<-\chi \frac{96k^3+1}{192k^3-1}.
  \]

  \item $deg = -\chi$, $k = \tfrac{1}{\sqrt[3]{48}}$, and  
  \[
  \mathfrak{p} = \tfrac{2\pi}{3}, 
  \qquad 
  \mathfrak{a} >\tfrac{\pi\chi}{3}  \ \text{ arbitrary}.
  \]
\end{itemize}
\end{definition}

\begin{theorem}\label{thm:decoupled summary for bolt}
    Let $\Sigma$ be a closed Riemann surface with Euler characteristic $\chi$ and $deg\in\mathbb{Z}$. Then for each admissible $(deg,\chi,k,\mathfrak{a},\mathfrak{p})$, there is a cohomogeneity-one Poincar\'e–Einstein metric of regular conformally K\"ahler geometry on the total space of complex line bundle over $\Sigma$ of degree $deg$, explicitly given by
    \begin{align}
        h&=\xi^{-2}\bigl(W\,d\xi^2+W^{-1}\eta^2+We^w g_{\Sigma}\bigr), \label{eq:PE metric general} \\
        e^{w}&=\frac{1}{\mathrm{Vol}_{\Sigma}}\left(-\frac{{deg}\cdot\mathfrak{p}}{12k^3}\xi^4-\frac{\mathfrak{a}}{6k^3}\xi^3+2\pi\chi(\Sigma)\xi^2+2({deg}\cdot\mathfrak{p})\xi+\mathfrak{a}\right),\\
        W&=\frac{({deg}\cdot\mathfrak{p})\xi+\mathfrak{a}}{-\frac{{deg}\cdot\mathfrak{p}}{12k^3}\xi^4-\frac{\mathfrak{a}}{6k^3}\xi^3+2\pi\chi(\Sigma)\xi^2+2({deg}\cdot\mathfrak{p})\xi+\mathfrak{a}},
    \end{align} 
    with $\xi\in[0,\frac{1}{2})$ and $\eta$ determined by \eqref{eq:d eta final}.
\end{theorem}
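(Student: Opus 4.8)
The plan is to verify that, for an admissible tuple, the explicit decoupled functions $w,W$ of \eqref{eq:decoupled solutions w}--\eqref{eq:decoupled solutions W} assemble into an honest smooth Poincar\'e--Einstein metric with the asserted geometry and topology. First I would record that $w,W$ as defined there solve the reduced system \eqref{eq:twisted toda final}--\eqref{eq:W final}: substituting the formulas and using the integral identities \eqref{eq:integral We^w}--\eqref{eq:integral e^w} reduces this to a direct polynomial computation. The $1$-form $\eta$ is obtained by integrating \eqref{eq:d eta final}; its right-hand side is closed as a consequence of \eqref{eq:twisted toda final}, and $\tfrac1{\mathfrak p}\int_{\{\xi\}\times\Sigma}d\eta=\tfrac1{\mathfrak p}\int_\Sigma (We^w)_\xi\,d\mathrm{vol}_\Sigma=deg$ by \eqref{eq:integral We^w}, so $\tfrac{2\pi}{\mathfrak p}\eta$ is a connection $1$-form on the principal $\mathbb{S}^1$-bundle $P\to[0,\tfrac12)\times\Sigma$ of Euler number $deg$. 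On the region $\xi\in[0,\tfrac12)$ the metric $g=Wd\xi^2+W^{-1}\eta^2+We^wg_\Sigma$ is then K\"ahler and $h=\xi^{-2}g$ satisfies $\mathrm{Ric}_h=-3h$, by the reduction of the Einstein equation in Sections~\ref{subsec:Type II with Kahler} and~\ref{subsec:canonical conformal change}.

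Second, I would invoke the three bolt propositions to handle positivity. By construction, admissibility (Definition~\ref{def:admissible five tuple}, Definition~\ref{def:admissible five tuple S2 case}) is exactly the conjunction of the no-cone-angle relation $\tfrac{\mathfrak p}{2A_0}=2\pi$ (with $A_0$ the leading coefficient of $W$ at $\xi=\tfrac12$) and conditions \ref{C1}--\ref{C3}, and Propositions~\ref{prop:range for k T2}, \ref{prop:range for k Sigma_g}, \ref{prop:range for k S2 bolt} translate these into the listed parameter ranges. Thus for an admissible tuple $\mathfrak a,\mathfrak p>0$, $We^w>0$ throughout $[0,\tfrac12]$, and $e^w>0$ on $[0,\tfrac12)$ with a simple zero at $\xi=\tfrac12$; consequently $W>0$ on $[0,\tfrac12)$ while $W^{-1}=e^w/(We^w)$ has a simple zero at $\xi=\tfrac12$. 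This is all the positivity needed for $g$ and $h$ to be genuine Riemannian metrics on $\xi\in[0,\tfrac12)$.

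Third, for the conformal boundary I would note that \eqref{eq:W final} forces $W\equiv1$ at $\xi=0$, while $e^w|_{\xi=0}=\mathfrak a/\mathrm{Vol}_\Sigma>0$, so $g$ extends smoothly and non-degenerately across $\{0\}\times\partial M$ and $\xi$ is a smooth boundary defining function with $d\xi\neq0$; hence $h=\xi^{-2}g$ is conformally compact, and being Einstein with $\mathrm{Ric}_h=-3h$ it is Poincar\'e--Einstein with conformal infinity $[\,\eta^2+(\mathfrak a/\mathrm{Vol}_\Sigma)g_\Sigma\,]$. Since $\partial M=\{\xi=0\}$ is the total space of $P$, the Killing field $\mathcal{K}=\partial_\theta$ of both $g$ and $h$ generates a free isometric $\mathbb{S}^1$-action on $\partial M$ with quotient $(\Sigma,(\mathfrak a/\mathrm{Vol}_\Sigma)g_\Sigma)$, so $(M,h,g)$ is regular conformally K\"ahler; and every metric coefficient depends only on $\xi$, so the hypersurfaces $\{\xi=\mathrm{const}\}$ are (locally) homogeneous circle bundles over $(\Sigma,g_\Sigma)$ and $h$ is cohomogeneity one.

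The remaining and main point is to close the metric up smoothly at the bolt $\xi=\tfrac12$ and to identify $M$. Fixing a local holomorphic coordinate $x+iy$ on $\Sigma$ over which $P$ is trivialized, write $\eta=d\theta+\mathcal{X}\,dx+\mathcal{Y}\,dy$ with $\theta$ the fiber coordinate of period $\mathfrak p$ and $\mathcal{X},\mathcal{Y}$ fixed by \eqref{eq:d eta final}, introduce $\tau:=\sqrt{\mathfrak p/\pi}\,(\tfrac12-\xi)^{1/2}$, and set $x_1:=\tau\cos\theta$, $x_2:=\tau\sin\theta$, $x_3:=x$, $x_4:=y$. Because $W^{-1}$ has a simple zero at $\xi=\tfrac12$ with leading coefficient matched to $\mathfrak p$ via $\tfrac{\mathfrak p}{2A_0}=2\pi$, and $We^w$ remains positive, a direct computation shows that $h$ extends across $\{x_1=x_2=0\}$ as a Lipschitz metric, the locus $\{x_1=x_2=0\}$ being a copy of $\Sigma$ (the bolt $\Xi$); elliptic regularity for Einstein metrics in harmonic coordinates (Proposition~4.15 of \cite{Li_Sun_2025}) then upgrades $h$ to a smooth Einstein metric across $\Xi$. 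The manifold $M$ is obtained by gluing a disc-bundle neighborhood of $\Xi$ to $[0,\tfrac12)\times P$, which is precisely the total space of the complex line bundle over $\Sigma$ of degree $deg$; feeding the explicit $w,W$ into \eqref{eq:PE metric general} gives the stated formula. The main obstacle is exactly this last step: the cone-angle normalization removes only the obvious orbifold singularity, and full smoothness of the a priori merely Lipschitz Einstein metric at the degenerate locus $\xi=\tfrac12$ has to come from the regularity theorem rather than from the ansatz itself.
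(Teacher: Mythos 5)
Your proposal is correct and follows essentially the same route as the paper: the paper's proof likewise passes to the local trivialization $\eta=d\theta+\mathcal{X}dx+\mathcal{Y}dy$, performs the coordinate change $\tau=\sqrt{\mathfrak p/\pi}(\tfrac12-\xi)^{1/2}$ with $x_1=\tau\cos\theta$, $x_2=\tau\sin\theta$, checks that $h$ is Lipschitz across the bolt, and invokes elliptic regularity for Einstein metrics (Proposition 4.15 of \cite{Li_Sun_2025}) to upgrade to smoothness. You spell out the preliminary verifications (positivity via Propositions~\ref{prop:range for k T2}--\ref{prop:range for k S2 bolt}, the boundary behavior at $\xi=0$, and the identification of $M$) in more detail than the paper, which simply treats them as established by the preceding sections, but the substance is the same.
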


As can be seen from the definition, fixing the topological parameters $\chi$ and $deg$, the set of admissible tuples is parameterized by $k$ if $deg\neq\chi$ and is parameterized by $\ma$ if $deg=-\chi$. Hence, we obtain in fact a one-parameter family of Poincaré–Einstein metrics on the total space of any complex line bundle over any closed Riemann surface.
Note that $g:=\xi^2h$ is a smooth K\"ahler manifold with boundary whose scalar curvature ranges between $0$ and $\tfrac{1}{2k^3}$, vanishes on the boundary, and attains its maximum at the bolt $\xi=\frac{1}{2}$. As we mentioned in the introduction, such K\"ahler metrics are actually extremal K\"ahler.

The above computation remains valid, in the averaged sense, for general regular conformally K\"ahler Poincar\'e-Einstein metrics defined either over the total space of complex line bundles over $\Sigma$, or over $B^4$. For such a Poincar\'e-Einstein metric to be smooth, without introducing cone angles or orbifold singularities, the tuple $(deg,\chi,k,\ma,\mpp)$ must be admissible.

\section{The Dirichlet boundary value problem}
\label{sec:Dirichlet boundary value problem}

Fix an oriented smooth surface $(\Sigma,g^\natural)$ of genus $\ttg \geq 1$ and let $M$ be the smooth $4$-manifold underlying the total space of a complex line bundle $L \to \Sigma$ of degree $deg$. Suppose $(deg,\chi,k,\ma,\mpp)$ is admissible, with $\chi=\chi(\Sigma)$ and $\ma$ being the area of $g^\natural$. We then consider the following fill-in problem.
\begin{question}\label{ques: two dimensional fill in}
    When do $(\Sigma,g^{\natural})$ and the admissible tuple $(deg,\chi,k,\ma,\mpp)$ fill in a regular conformally K\"ahler Poincar\'e-Einstein metric on $M$?
\end{question}
By saying that they fill in a regular conformally K\"ahler Poincar\'e-Einstein metric on $M$, we  mean that there exists a conformally K\"ahler Poincar\'e-Einstein metric $(M,h,g)$ such that the 2-dimensional infinity of $h$ is precisely $g^\natural$, $\mathfrak{p}$ is the length of the $\mathbb{S}^1$ fibers of $g^\flat$, $\ma$ is the area of $(\Sigma,g^\natural)$, and $k$ is the number satisfying $k\cdot (2\sqrt{6}\max|\mathscr{W}_h^+|_h)^{1/3}=\frac{1}{2}\sign(s_g)$ when $h$ is not anti–self–dual, with $k=\pm\infty$ when $h$ is anti–self–dual. Writing $g^{\natural} = e^\varphi g_{\Sigma}$, where $g_{\Sigma}$ has constant Gauss curvature $K_{\Sigma}=0$ or $-1$ (with volume $4\pi^2$ when $\ttg=1$), the geometric fill-in problem reduces, at the PDE level, to the following Dirichlet boundary value problem.

\begin{question}\label{ques:pde fill in}
    Given the admissible tuple $(deg,\chi,k,\ma,\mpp)$, does the following Dirichlet boundary value problem 
    \begin{equation}\label{eq:pde fill in}
        \left\{
            \begin{aligned}
                &(e^w)_{\xi\xi}+\Delta_\Sigma w-2K_\Sigma=-\xi e^w\frac{12-6\xi \partial_\xi w}{12k^3+\xi^3},\\
                &w|_{\xi=0}=\varphi, \quad \int_{\Sigma}e^\varphi d\operatorname{vol}_{\Sigma}=\ma
            \end{aligned}
        \right.
    \end{equation}
    have a unique solution $w$ such that $w-\log(\frac{1}{2}-\xi)\in C^{2,\alpha}([0,\frac{1}{2}]\times \Sigma)$?
\end{question}

The main theorem of this section is the following existence and uniqueness for Question \ref{ques:pde fill in}.

\begin{theorem}\label{thm:existence and uniqueness pde}
    There exists a unique solution $w$ to Question \ref{ques:pde fill in}.
\end{theorem}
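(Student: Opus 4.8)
The plan is to study the quasilinear degenerate elliptic equation in \eqref{eq:pde fill in} by introducing the new unknown $u := w - \log(\tfrac12-\xi)$, which removes the boundary singularity at $\xi = \tfrac12$, and by exploiting the substitution $\xi = \tfrac12 - \tfrac14|\mathbf{z}|^2$ indicated in the introduction to lift the equation on $[0,\tfrac12]\times\Sigma$ to a genuinely (non-degenerate) elliptic equation on a cylinder $B_1 \times \Sigma \subset \mathbb{R}^4\times\Sigma$, where $\{\xi=\tfrac12\}$ becomes the interior hypersurface $\{\mathbf{z}=0\}$. The point of this lift is that the $\xi$-derivative terms $(e^w)_{\xi\xi}$, after the substitution $\xi \mapsto \mathbf{z}$ and after absorbing the logarithmic factor, combine into a rotationally symmetric fourth-order-in-nothing but second-order Laplacian-type operator on the $\mathbf{z}$-variables, so the degeneration is only apparent. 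I would first verify this reduction carefully and record the resulting equation as a uniformly elliptic (in the interior, with a Robin-type condition coming from $\{\xi=0\}$) quasilinear PDE for $u$ on the compact manifold-with-boundary $B_1\times\Sigma$ whose only boundary is $\{\xi = 0\} = \partial B_1 \times \Sigma$, carrying the Dirichlet data $w|_{\xi=0} = \varphi$.

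For existence I would use the method of continuity (or Leray--Schauder degree theory) in H\"older spaces $C^{2,\alpha}$. The natural homotopy is through the family of admissible tuples: by the remark after Definition~\ref{def:admissible five tuple}, fixing $(deg,\chi)$ the admissible tuples form a connected one-parameter family, and the decoupled solutions of Section~\ref{sec:decoupled solutions} (Theorem~\ref{thm:decoupled summary for bolt}) furnish an explicit solution for the constant boundary datum $\varphi \equiv \varphi_0$ with $e^{\varphi_0}g_\Sigma = We^w|_{\xi=0}\cdot(\ldots)$ normalized so that $\int_\Sigma e^{\varphi_0}\,d\mathrm{vol}_\Sigma = \ma$. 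So I would set up a two-parameter continuity argument: first deform $\varphi$ to the constant $\varphi_0$ keeping the tuple fixed (the constraint $\int_\Sigma e^{\varphi_t}\,d\mathrm{vol}_\Sigma = \ma$ is preserved along, e.g., $\varphi_t = \log\!\big(\tfrac{\ma\, e^{t\varphi + (1-t)\varphi_0}}{\int_\Sigma e^{t\varphi+(1-t)\varphi_0} d\mathrm{vol}_\Sigma}\big)$). The openness of the set of $t$ for which a solution exists is the linearized invertibility statement: I must show the linearization of the lifted operator at a solution, a second-order uniformly elliptic operator on $B_1\times\Sigma$ with Dirichlet condition on $\partial B_1\times\Sigma$, is an isomorphism $C^{2,\alpha}_0 \to C^{0,\alpha}$. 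Here the sign of the zeroth-order term is crucial: differentiating the equation, the coefficient of the undifferentiated term coming from $-\xi e^w\frac{12-6\xi\partial_\xi w}{12k^3+\xi^3}$ together with the $\partial_\xi w$ term must have the favorable sign, which I expect to follow from $W > 0$ on $[0,\tfrac12)$ (a consequence of admissibility) and positivity of $12k^3 + \xi^3$ for admissible $k$; this gives a maximum principle, hence injectivity, and then Fredholm theory gives surjectivity.

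Closedness requires a priori $C^{2,\alpha}$ estimates uniform along the homotopy, which I expect to be the main obstacle. The chain is: (i) a $C^0$ bound on $u = w - \log(\tfrac12-\xi)$, i.e.\ two-sided bounds $c(\tfrac12-\xi) \le e^w \le C(\tfrac12-\xi)$ and $e^w > 0$ on $[0,\tfrac12)$ with a simple zero at $\tfrac12$ — this is where the integral identities \eqref{eq:integral We^w}–\eqref{eq:integral e^w} do real work, since integrating the equation over $\Sigma$ pins down $\int_\Sigma e^w\,d\mathrm{vol}_\Sigma$ and $\int_\Sigma We^w\,d\mathrm{vol}_\Sigma$ as \emph{explicit} polynomials in $\xi$ independent of $\varphi$ (only the distribution over $\Sigma$ is unknown), and admissibility is exactly the condition making these polynomials have the right positivity; combined with a Moser iteration / maximum principle on the lifted equation this should give the pointwise bounds; (ii) upgrading to $C^{1,\alpha}$ and then $C^{2,\alpha}$ by De Giorgi--Nash--Moser and Schauder for the lifted uniformly elliptic quasilinear equation, using the $C^{2,\alpha}$ bound on the boundary datum $\varphi_t$ (which is uniform by construction). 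A delicate point is the behaviour near the interior hypersurface $\{\mathbf z=0\}$: one must check the lifted equation's coefficients are H\"older (indeed smooth away from $\{\mathbf z=0\}$, and the required regularity across it follows because the original data depend on $\xi = \tfrac12-\tfrac14|\mathbf z|^2$, an even function of $\mathbf z$, so everything is a smooth function of $|\mathbf z|^2$). Finally, uniqueness: given two solutions, their difference $v$ satisfies a linear homogeneous equation of the type analyzed for openness, with zero Dirichlet data; the maximum principle (same sign condition) forces $v\equiv 0$. The additional claim that $w - \log(\tfrac12-\xi) \in C^\infty((0,\tfrac12]\times\Sigma)$ follows from elliptic bootstrapping away from $\{\xi=0\}$, invoking the standard regularity for Einstein metrics as in the remark preceding Definition~\ref{def:admissible five tuple} (cf.\ Proposition 4.15 of \cite{Li_Sun_2025}), since the solution corresponds to a smooth conformally K\"ahler Einstein metric there.
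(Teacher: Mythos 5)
Your overall architecture — lift via $\xi=\tfrac12-\tfrac14|\mathbf z|^2$ to a uniformly elliptic equation on a domain in $\mathbb{R}^4\times\Sigma$ where $\{\xi=\tfrac12\}$ is interior, $C^0$ bound by maximum principle, De Giorgi--Nash--Moser plus Schauder for a priori $C^{2,\alpha}$ estimates, and a continuity/degree argument anchored at the decoupled solution — is exactly the paper's strategy. But there is one genuine gap, and it sits at the heart of both your openness step and your uniqueness step: the claim that the linearization has a zeroth-order coefficient of favorable sign, ``which I expect to follow from $W>0$.'' This is not true as stated. The linearized operator at a solution $\mathbf u$ has the form $L_{\mathbf u}\mathbf v=\Delta_x\mathbf v+\psi\,\Delta_{\mathbf z}(e^{\mathbf u}\mathbf v)+(\text{first order})+2K_\Sigma e^{\mathbf u}\mathbf v$; expanding $\Delta_{\mathbf z}(e^{\mathbf u}\mathbf v)$ (or substituting $\tilde{\mathbf v}=e^{\mathbf u}\mathbf v$ and expanding $\Delta_x(e^{-\mathbf u}\tilde{\mathbf v})$) produces a zeroth-order term involving $\Delta e^{\mathbf u}$, which has no sign, so no naive maximum principle applies to $L_{\mathbf u}$ or to the linear equation satisfied by the difference of two solutions. (Also, $W>0$ for a general solution is itself not a consequence of admissibility — the paper needs a separate geometric continuity argument in Section~\ref{subsec:positivity of W} to prove it, and it is not available during the openness step.) The paper's resolution is the content of Lemma~\ref{lem:no cokernel for linearized operator when genus greater than 1}: one instead kills the \emph{cokernel} by showing the formal adjoint $L^*_{\mathbf u}$ has trivial kernel with zero Dirichlet data, and this is done by conjugating, $L^*_{\mathbf u,q}=e^{-q}L^*_{\mathbf u}(e^{q}\cdot)$, with the specific choice $q=\log\psi+\int_0^\xi a(\tau)\,d\tau$ depending only on $\xi$. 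A computation using the ODE satisfied by the decoupled profile $e^{\overline w}$ shows the zeroth-order coefficient of $L^*_{\mathbf u,q}$ is exactly $2K_\Sigma\le 0$, for \emph{every} $\mathbf u$ — this is the miracle that makes the whole scheme close, and it is the step your proposal is missing. Injectivity of $L_{\mathbf u}$ then follows from Fredholm index zero, the reverse of your order.

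The same gap affects your uniqueness argument: the difference of two arbitrary solutions satisfies a linear equation with the same untamed zeroth-order term, so ``the maximum principle forces $v\equiv0$'' is unjustified. The paper only proves uniqueness for the \emph{trivial} boundary datum (where one compares a solution directly with the decoupled solution $\overline w$, for which the special structure of equation \eqref{eq:PDE in u} and the $\delta$-perturbation trick of Lemma~\ref{lem:maximum principle for u} do give a maximum principle), and then deduces uniqueness for general data from the global structure: the boundary restriction map $\mathcal M\to C^{2,\alpha}(\partial\Omega)$ is a proper local diffeomorphism between metric spaces, hence a finite covering, hence a diffeomorphism once a single fiber is a singleton. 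You should adopt this route, or else supply a conjugation argument for the difference equation analogous to the adjoint trick. The remaining ingredients of your sketch ($C^0$ bound, regularity across $\{\mathbf z=0\}$ via evenness in $\mathbf z$, properness) are correct and match the paper; the remark about a Robin condition at $\xi=0$ is spurious — the condition there is pure Dirichlet.
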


In order to relate to Question \ref{ques: two dimensional fill in}, one still needs to prove that solutions $w$ to Question \ref{ques:pde fill in} have geometric meaning, in the sense that they correspond to smooth Poincar\'e-Einstein metrics via \eqref{eq:g final}-\eqref{eq:d eta final}. In particular, one needs to show $W=\frac{12-6\xi w_{\xi}}{12+\xi^3/k^3}>0$, which is not clear at all at the level of PDEs. This will be proved in Section \ref{subsec:positivity of W} by a geometric continuity argument. Hence, together with regularity theory of Einstein metrics, our Theorem \ref{thm:existence and uniqueness pde} also settles down the geometric fill-in problem Question \ref{ques: two dimensional fill in}, showing both existence and uniqueness for conformally K\"ahler PE fill-ins.

 Denote $\overline{w}(\xi)=\log(\frac{1}{\operatorname{Vol}_{\Sigma}}\int_{\{\xi\}\times\Sigma}  e^w d\operatorname{vol}_{\Sigma})$, which is the corresponding decoupled solution. In the following we always assume the genus $\ttg\geq1$.

\subsection{\texorpdfstring{$C^0$ bound assuming $\ttg \ge 1$}{C0 bound when ttg >= 1}}

Recall that $e^{\overline{w}}$ is a polynomial in $\xi$, positive on $[0,\frac{1}{2})$ and has a simple zero at $\xi=\frac{1}{2}$. It moreover satisfies the ODE  $$(e^{\overline{w}})_{\xi\xi}+\xi e^{\overline{w}} \frac{12-6\xi \partial_{\xi}\overline w}{12k^3+\xi^3}=2K_{\Sigma}.$$
Also, $\int_{\{\xi\}\times\Sigma}e^w=\int_{\{\xi\}\times\Sigma}e^{\overline{w}}$ for any $\xi\in[0,\frac{1}{2})$. We set $u=w-\overline{w}$, $a(\xi)=\frac{-6\xi^2}{12k^3+\xi^3}$, $b(\xi)=\frac{12\xi}{12k^3+\xi^3}$, and $\psi(\xi)=e^{\overline{w}}/(\frac12-\xi)$, then $u$ satisfy the following PDE
\begin{equation}\label{eq:PDE in u}
\Delta_{\Sigma} u+e^{\overline{w}}(e^u)_{\xi\xi}+2(e^u)_{\xi}(e^{\overline{w}})_{\xi}+a(\xi)e^{\overline{w}}(e^u)_{\xi}+2K_{\Sigma}(e^u-1)=0.
\end{equation}
We have the following boundary condition along $\{0\}\times\Sigma$
\begin{equation}\label{eq:bc_PDE in u}
    u|_{\xi=0}=\varphi-\overline{w}|_{\xi=0}.
\end{equation}
In the following, we denote $u^+:=\max\{u,0\}$ and $u^-:=\max\{-u,0\}$.

\begin{lemma}\label{lem:maximum principle for u}
     Suppose $u\in C^2([0,\frac{1}{2}]\times\Sigma)$ and solves \eqref{eq:PDE in u},  then
     $$\sup_{[0,\frac{1}{2}]\times\Sigma} u\leq \sup_{\{0\}\times\Sigma}u^+, \quad\inf_{[0,\frac{1}{2}]\times\Sigma} u\geq \inf_{\{0\}\times\Sigma}-u^-.$$
\end{lemma}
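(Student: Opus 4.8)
The plan is to run a maximum principle argument on the elliptic equation \eqref{eq:PDE in u}, treating $\xi$ as an extra ``spatial'' direction on the compact manifold-with-boundary $[0,\tfrac12]\times\Sigma$. The key observation is that the boundary of this domain has two pieces: the genuine Dirichlet boundary $\{0\}\times\Sigma$, where $u$ is prescribed by \eqref{eq:bc_PDE in u}, and the ``other'' boundary $\{\tfrac12\}\times\Sigma$, where the equation degenerates. So the first thing I would do is rewrite \eqref{eq:PDE in u} as a linear (in $u$) elliptic equation for $v:=e^u$. Expanding $(e^u)_{\xi\xi}=v_{\xi\xi}$, $(e^u)_\xi=v_\xi$, and $\Delta_\Sigma u=\Delta_\Sigma v/v-|\nabla_\Sigma v|^2/v^2$, multiply through by $v>0$ to get an equation of the form
\begin{equation*}
\Delta_\Sigma v-\frac{|\nabla_\Sigma v|^2}{v}+e^{\overline w}v_{\xi\xi}+\bigl(2(e^{\overline w})_\xi+a(\xi)e^{\overline w}\bigr)v_\xi+2K_\Sigma v(v-1)=0.
\end{equation*}
The point is that the second-order part is $\Delta_\Sigma+e^{\overline w}\partial_\xi^2$, which is (degenerate-)elliptic on $[0,\tfrac12]\times\Sigma$ since $e^{\overline w}\ge0$, with strict ellipticity on $[0,\tfrac12)\times\Sigma$; and $v\equiv1$ corresponds exactly to $u\equiv0$, i.e. $w=\overline w$.

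Next I would apply the maximum principle to show $\sup u\le\sup_{\{0\}\times\Sigma}u^+$, equivalently $\sup v\le\max\{1,\sup_{\{0\}\times\Sigma}v\}$. Suppose for contradiction the maximum of $v$ over $[0,\tfrac12]\times\Sigma$ exceeds this bound; then it is attained at some interior or ``far-boundary'' point $p$ with $v(p)>1$. If $p$ is in the interior $(0,\tfrac12)\times\Sigma$, then at $p$ we have $\nabla_\Sigma v=0$, $v_\xi=0$, $\Delta_\Sigma v\le0$, $v_{\xi\xi}\le0$, so the left-hand side is $\le 2K_\Sigma v(v-1)\le 0$ (here $K_\Sigma\in\{0,-1\}$ since $\ttg\ge1$, so $2K_\Sigma v(v-1)\le0$ as $v>1$), and the equation forces every term to vanish; in particular $K_\Sigma v(v-1)=0$, so $K_\Sigma=0$, and then a connectedness/strong-maximum-principle argument (or just iterating: the set where $v$ attains its max is open by the strong maximum principle for the elliptic operator and closed by continuity) propagates the maximum to the Dirichlet boundary $\{0\}\times\Sigma$, a contradiction. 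The remaining case is $p\in\{\tfrac12\}\times\Sigma$: here I would use the Hopf-type boundary-point argument in the $\xi$-direction, noting that along $\xi=\tfrac12$ the coefficient $e^{\overline w}$ of $v_{\xi\xi}$ vanishes to first order while $a(\xi)e^{\overline w}$ also vanishes, but the first-order coefficient $2(e^{\overline w})_\xi=2\psi\cdot(-1)+\cdots$ is $\ne0$ there—so the degenerate operator is of Fichera type, and the sign of the drift term at $\xi=\tfrac12$ must be checked: one shows the Fichera function points inward/outward appropriately so that no interior max of $v$ can occur on $\{\tfrac12\}\times\Sigma$ unless $v$ is constant. The inequality $\inf u\ge\inf_{\{0\}\times\Sigma}(-u^-)$, i.e. $\inf v\ge\min\{1,\inf_{\{0\}\times\Sigma}v\}$, follows by the symmetric argument applied to a minimum of $v$, where now $v<1$ and $2K_\Sigma v(v-1)\ge0$ flips the inequalities consistently.

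The main obstacle I anticipate is the boundary $\{\tfrac12\}\times\Sigma$, where the equation degenerates and the standard interior strong maximum principle does not directly apply. The cleanest way around it is probably to avoid Fichera theory altogether by using the substitution $\xi=\tfrac12-\tfrac14|\mathbf z|^2$ already advertised in the introduction: lifting to $\mathbb R^4\times\Sigma$ turns $\{\tfrac12\}\times\Sigma$ into an interior slice $\{\mathbf z=0\}\times\Sigma$ of a genuinely elliptic (non-degenerate, $\mathbb S^1$-invariant) equation, after which the maximum of the lifted $v$ can only occur either in the genuine interior—handled by the strong maximum principle and the sign of $K_\Sigma v(v-1)$ exactly as above—or on the Dirichlet boundary, which is precisely $\{\mathbf z\colon|\mathbf z|^2=2\}\times\Sigma\cong\{0\}\times\Sigma$. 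This reduces everything to the non-degenerate case and makes the connectedness argument routine. I would also remark that the hypothesis $\ttg\ge1$ (hence $K_\Sigma\le0$) is used essentially: it is exactly what makes $2K_\Sigma v(v-1)$ have the ``good'' sign at both a max with $v>1$ and a min with $v<1$, which fails for $K_\Sigma=+1$.
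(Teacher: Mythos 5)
Your argument is correct in substance, but it takes a genuinely different route from the paper. The paper's proof is a one-step perturbation trick: it replaces the decoupled solution $\overline{w}$ by $\overline{w}_\delta$ solving the same ODE with right-hand side $2K_\Sigma-\delta$, so that $u_\delta:=w-\overline{w}_\delta$ satisfies \eqref{eq:PDE in u} with an extra strictly negative term $-\delta e^{u_\delta}$. At a putative nonnegative maximum of $u_\delta$ in $(0,\tfrac12]\times\Sigma$ — whether interior or on the degenerate slice $\{\tfrac12\}\times\Sigma$, where $e^{\overline{w}_\delta}=0$ and $(e^{\overline{w}_\delta})_\xi<0$ while $(u_\delta)_\xi\ge 0$ — every first- and second-order term has a favorable sign and $-\delta e^{u_\delta}$ forces a pointwise contradiction; letting $\delta\to 0$ finishes. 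This disposes of both the $K_\Sigma=0$ degeneracy and the boundary $\xi=\tfrac12$ simultaneously, using only the weak (one-point) maximum principle, with no strong maximum principle, no Hopf lemma, and no change of variables. Your version instead needs the strong maximum principle to propagate a flat interior maximum when $K_\Sigma=0$, and needs the lift $\xi=\tfrac12-\tfrac14|\mathbf z|^2$ to handle $\{\tfrac12\}\times\Sigma$; that lift is sound here (since $\psi=e^{\overline w}/(\tfrac12-\xi)$ is bounded below on $[0,\tfrac12]$ the lifted equation \eqref{eq:lifted equation for regularity} is uniformly elliptic, and $\mathbf u\in C^2$ whenever $u\in C^2$), and indeed the paper itself invokes exactly this maximum principle for the lifted equation later on. Your Fichera alternative is the one I would avoid: the Hopf boundary-point lemma is not available where the second-order coefficient degenerates, so only the lifted version of your argument is airtight. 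Two minor points: when you multiply the equation through by $v$, the $\xi$-derivative terms should also acquire a factor of $v$ (harmless for the sign analysis since $v>0$); and what buys you the conclusion in the $K_\Sigma=0$ interior case is not a contradiction but the constancy of $v$, which makes the claimed inequality hold with equality — worth stating precisely.
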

\begin{proof}
 We shall prove the inequality for the supremum only, since the case of the infimum is similar. For any $\delta>0$, denote $\overline{w}_{\delta}$ the solution to the ODE 
  $$
  (e^{\overline{w}_\delta})_{\xi\xi}+\xi e^{\overline{w}_\delta} \frac{12-6\xi \partial_{\xi}{\overline w}_{\delta}}{12k^3+\xi^3}=2K_{\Sigma}-\delta
  $$
  with $\lim_{\xi\rightarrow{\frac{1}{2}}^-}\overline{w}_\delta(\xi)=-\infty$, and $\overline{w}_{\delta}-\overline{w}\rightarrow 0$ in $C^0([0,\tfrac{1}{2}])$ as $\delta\to0$, which is clear since one can write down $\overline{w}_\delta$ and $\overline{w}$ explicitly. Set $u_{\delta}=w-\overline{w}_{\delta}$. Then $u_\delta$ satisfies the PDE
  $$
  \Delta_{\Sigma} {u_\delta}+e^{{\overline{w}_{\delta}}}(e^{u_\delta})_{\xi\xi}+2(e^{u_\delta})_{\xi}(e^{{\overline{w}_{\delta}}})_{\xi}+a(\xi)e^{{\overline{w}_{\delta}}}(e^{u_\delta})_{\xi}+2K_{\Sigma}(e^{u_\delta}-1)-\delta e^{u_{\delta}}=0.
  $$
  We shall prove that $u_\delta$ cannot achieve a non-negative maximum in $(0,\frac{1}{2}]\times\Sigma$. Proceed with contradiction. Suppose $u_\delta$ achieves a non-negative maximum at $q_0\in (0,\tfrac{1}{2}]\times \Sigma$.
  
  \textbf{Case 1.} If $q_0\in (0,\tfrac{1}{2})\times\Sigma$, then at $q_0$ 
  $$
  (u_\delta)_{\xi}=0,\quad (e^{u_\delta})_{\xi\xi}\leq 0,\quad \Delta_{\Sigma}u_\delta\leq 0.
  $$
  
  Hence at $q_0$ we get a contradiction:
  $$
  0\leq 2K_{\Sigma}(e^{u_\delta}-1)-\delta e^{u_\delta}<0.
  $$
    
  \textbf{Case 2.} If $q_0\in\{\tfrac{1}{2}\}\times\Sigma$, then at $q_0$
  $$
  (u_\delta)_\xi\geq 0,\quad \Delta_\Sigma u_\delta\leq0.
  $$
  
  At $q_0$, we have $e^{{\overline{w}_{\delta}}}=0$ and $(e^{{\overline{w}_{\delta}}})_{\xi}<0$. Thus at $q_0$ we get a contradiction again:
  $$
  0=\Delta_{\Sigma}u_\delta+2(e^{u_\delta})_{\xi}(e^{{\overline{w}_{\delta}}})_{\xi}+2K_{\Sigma}(e^{u_\delta}-1)-\delta e^{u_\delta}\leq 2K_{\Sigma}(e^{u_\delta}-1)-\delta e^{u_\delta}.
  $$
  Hence 
  $
  \sup_{[0,\tfrac{1}{2}]\times\Sigma} u_\delta \leq \sup_{\{0\}\times\Sigma} (u_\delta)^+ .
  $
  Letting $\delta\rightarrow 0$, we obtain the desired conclusion. 
\end{proof}

\subsection{\texorpdfstring{Higher order estimates when $\ttg \geq 1$}{Higher order estimates when ttg >= 1}}

\begin{theorem}\label{thm:higher_regularity_estimates}
Suppose $u\in C^2([0,\frac{1}{2}]\times \Sigma)$ and solves \eqref{eq:PDE in u}. Fix $m\geq 1,\,0<\alpha<1$. If the boundary value $u|_{\{0\}\times\Sigma}\in C^{m,\alpha}(\{0\}\times\Sigma)$, then there exists a constant $C$ depending on $m,\alpha,\,|u|_{C^{m,\alpha}(\{0\}\times\Sigma)}$ such that
$$
|u|_{C^{m,\alpha}([0,\tfrac{1}{2}]\times\Sigma)}\leq C.
$$
Moreover, for any $n\in\mathbb{N}$, there exist constants $C_n$ depending on $n, |u|_{C^{0}(\{0\}\times\Sigma)}$ such that 
$$|u|_{C^n([\frac{1}{4},\frac{1}{2}]\times\Sigma)}\leq C_n.$$
\end{theorem}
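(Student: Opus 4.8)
The $C^0$ bound is exactly \cref{lem:maximum principle for u}, which gives
\[
|u|_{C^0([0,\tfrac12]\times\Sigma)}\le |u|_{C^0(\{0\}\times\Sigma)},
\]
so the plan is to bootstrap this to the higher norms. Throughout, $\overline w$ is the fixed reference decoupled solution \eqref{eq:decoupled solutions w}, depending only on the admissible tuple and on $(\Sigma,g_\Sigma)$; recall that $e^{\overline w}$ is a polynomial in $\xi$, positive on $[0,\tfrac12)$ with a simple zero at $\xi=\tfrac12$, so $\psi(\xi):=e^{\overline w}/(\tfrac12-\xi)$ is continuous and strictly positive on $[0,\tfrac12]$, while $e^{\overline w}$ is bounded below by a positive constant on $[0,\tfrac38]$. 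On $[0,\tfrac12)\times\Sigma$ the equation \eqref{eq:PDE in u} is uniformly elliptic and quasilinear, so away from $\xi=\tfrac12$ the estimates are standard; the sole real difficulty is the degeneration at $\xi=\tfrac12$, where the coefficient $e^{\overline w}$ of $(e^u)_{\xi\xi}$ vanishes while $(e^{\overline w})_\xi$ stays negative.

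\emph{Step 1: the degenerate boundary $\xi=\tfrac12$.} The plan is to lift the equation across $\{\xi=\tfrac12\}$ by the substitution $\xi=\tfrac12-\tfrac14|\mathbf z|^2$, $\mathbf z\in\mathbb R^4$, setting $\tilde u(\mathbf z,x):=u(\tfrac12-\tfrac14|\mathbf z|^2,x)$, which is $SO(4)$-invariant in $\mathbf z$ and of class $C^2$ on $\{|\mathbf z|<\sqrt{3/2}\}\times\Sigma$ as a composition of $u\in C^2$ with a smooth map. Using $\Delta_{\mathbb R^4}f=(\tfrac12-\xi)f''-2f'$ for radial $f(\xi)$, the factorization $e^{\overline w}=\psi(\xi)(\tfrac12-\xi)$, and $(\tfrac12-\xi)\partial_\xi=-\tfrac12\,\mathbf z\cdot\nabla_{\mathbf z}$ on radial functions, a direct computation turns \eqref{eq:PDE in u} into
\[
\Delta_\Sigma\tilde u+\psi\,e^{\tilde u}\bigl(\Delta_{\mathbb R^4}\tilde u+|\nabla_{\mathbf z}\tilde u|^2\bigr)-\tfrac12 e^{\tilde u}\bigl(2\psi'+a\psi\bigr)\,\mathbf z\cdot\nabla_{\mathbf z}\tilde u+2K_\Sigma(e^{\tilde u}-1)=0,
\]
equivalently, for $\tilde v:=e^{\tilde u}$, $\psi\,\tilde v\,\Delta_{\mathbb R^4}\tilde v+\Delta_\Sigma\tilde v-\tilde v^{-1}|\nabla_\Sigma\tilde v|^2+(\text{lower order})=0$, and this holds on $\{|\mathbf z|<\sqrt{3/2}\}\times\Sigma$ including $\mathbf z=0$, since both sides are continuous and the identity holds where $\mathbf z\ne 0$. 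The decisive point is that the former boundary $\{\xi=\tfrac12\}$ has become the \emph{interior} slice $\{\mathbf z=0\}$, so no boundary condition is imposed there; moreover the principal part $\Delta_\Sigma+\psi e^{\tilde u}\Delta_{\mathbb R^4}$ is uniformly elliptic on $\{|\mathbf z|\le 1\}\times\Sigma$, because $\psi$ is pinched between positive constants and $\tilde u$ is bounded by the $C^0$ estimate. By the classical interior gradient estimate for uniformly elliptic quasilinear equations with gradient nonlinearity of natural (quadratic) growth and bounded solution (Bernstein's method), $|\nabla_{(\mathbf z,x)}\tilde u|\le C$ on $\{|\mathbf z|\le\tfrac34\}\times\Sigma$ with $C$ depending only on $|u|_{C^0(\{0\}\times\Sigma)}$; the equation then has continuous coefficients and bounded right-hand side, so $W^{2,p}$ and interior Schauder estimates followed by a bootstrap give $|\tilde u|_{C^n}\le C_n$ on $\{|\mathbf z|\le\tfrac12\}\times\Sigma$, with $C_n$ depending only on $n$ and $|u|_{C^0(\{0\}\times\Sigma)}$. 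Since a smooth $SO(4)$-invariant function of $\mathbf z$ is a smooth function of $|\mathbf z|^2$, hence of $\xi=\tfrac12-\tfrac14|\mathbf z|^2$, these bounds transfer to $|u|_{C^n([\tfrac14,\tfrac12]\times\Sigma)}\le C_n$, which is the second assertion.

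\emph{Step 2: the regular boundary $\xi=0$ and patching.} On $[0,\tfrac38]\times\Sigma$ the equation \eqref{eq:PDE in u} is uniformly elliptic quasilinear, with smooth Dirichlet datum $u|_{\xi=0}=\varphi-\overline w|_{\xi=0}\in C^{m,\alpha}$ and with the datum on $\{\xi=\tfrac38\}$ already bounded in every $C^n$ by Step~1. The reaction term has the favorable sign $\partial_u\bigl(2K_\Sigma(e^u-1)\bigr)=2K_\Sigma e^u\le 0$, which permits sub/super-barriers built from the boundary datum and hence a boundary gradient bound; combined with the interior gradient estimate this yields $|\nabla u|\le C$ on $[0,\tfrac38]\times\Sigma$, with $C$ controlled by $|u|_{C^{m,\alpha}(\{0\}\times\Sigma)}$. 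Standard boundary and interior $W^{2,p}$ and Schauder estimates for the resulting linear equation with Hölder coefficients, followed by a bootstrap, then give $|u|_{C^{m,\alpha}([0,\tfrac38]\times\Sigma)}\le C$. Covering $[0,\tfrac12]\times\Sigma$ by $[0,\tfrac38]\times\Sigma$ and $[\tfrac14,\tfrac12]\times\Sigma$ and combining with Step~1 produces the global estimate $|u|_{C^{m,\alpha}([0,\tfrac12]\times\Sigma)}\le C$, depending only on $m,\alpha$ and $|u|_{C^{m,\alpha}(\{0\}\times\Sigma)}$.

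\emph{Anticipated main obstacle.} The heart of the matter is Step~1: one must recognize that the degeneration of \eqref{eq:PDE in u} at $\xi=\tfrac12$ is exactly the one removed by the lift $\xi=\tfrac12-\tfrac14|\mathbf z|^2$ to $\mathbb R^4\times\Sigma$ — the dimension $4$ being forced by the ratio between the simple vanishing of $e^{\overline w}$ and the size of $(e^{\overline w})_\xi$ — and then control the quadratic gradient nonlinearity inherited from the $\Delta_\Sigma\log(\cdot)$ term, which is possible only because the a priori $C^0$ bound of \cref{lem:maximum principle for u} is already available and the reaction term carries the right sign. Verifying that the lifted equation is genuinely uniformly elliptic with smooth coefficients up to $\{\mathbf z=0\}$ is the second point requiring care.
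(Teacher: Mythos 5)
Your proposal is correct in its essential strategy and hits the one idea the proof actually turns on: the substitution $\xi=\tfrac12-\tfrac14|\mathbf z|^2$ lifting \eqref{eq:PDE in u} to $\mathbb R^4\times\Sigma$, so that the degenerate locus $\{\xi=\tfrac12\}$ becomes the interior slice $\{\mathbf z=0\}$ of a uniformly elliptic equation. This is exactly the paper's Step~1, and your lifted equation agrees with \eqref{eq:lifted equation for regularity} after expanding $\Delta_{\mathbf z}e^{\mathbf u}$. The execution differs in three places. First, the paper lifts the \emph{entire} domain ($\Omega=\{|\mathbf z|<\sqrt2\}\times\Sigma$, so $\{\xi=0\}$ becomes $\partial\Omega$) and treats interior and boundary estimates in one framework, whereas you lift only near $\xi=\tfrac12$ and handle $[0,\tfrac38]\times\Sigma$ by standard non-degenerate quasilinear boundary theory; both work, and yours is arguably more transparent near $\xi=0$, though for $m=1$ you should invoke the divergence-form $C^{1,\alpha}$ boundary theory rather than $C^{2,\alpha}$ Schauder, since the datum is only $C^{1,\alpha}$. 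Second, and more substantively, your interior estimate rests on ``the classical interior gradient estimate for uniformly elliptic quasilinear equations with natural quadratic gradient growth,'' which for non-divergence equations is delicate as a blanket citation. The paper sidesteps this entirely by rewriting $\psi\Delta_{\mathbf z}e^{\mathbf u}=\operatorname{div}_{\mathbf z}(\psi e^{\mathbf u}\nabla_{\mathbf z}\mathbf u)-e^{\mathbf u}\nabla_{\mathbf z}\psi\cdot\nabla_{\mathbf z}\mathbf u$ (equation \eqref{eq:lifted equation divergence form}): the quadratic gradient term is absorbed into the divergence structure, the frozen-coefficient equation is linear uniformly elliptic in divergence form with bounded lower-order terms, and De Giorgi--Nash--Moser plus Schauder bootstrap apply directly. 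Your route can be repaired the same way (or by working with $\tilde v=e^{\tilde u}$, for which the $\mathbf z$-part of the quadratic nonlinearity disappears), but as written the gradient-estimate step is the weakest link. Third, the reduction from $SO(4)$-invariant bounds on $\tilde u$ back to bounds on $u$ as a function of $\xi$ needs a \emph{quantitative} version of the even-function theorem; the paper proves this as Lemma~\ref{lem:half regularity}, with a loss of a factor of $2$ in the derivative count ($|f|_{C^m}\le C|F|_{C^{2m}}$), which is harmless here since all orders are controlled, but your one-line appeal to the qualitative Whitney statement does not by itself give the norm inequality you use. None of these is a fatal gap, but the second and third points are where a complete write-up would need to do real work, and the paper's divergence-form device is the cleaner way through the second.
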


\begin{proof}
Our proof breaks into the following steps.

\medskip\noindent
\textbf{Step 1: Lift to $\mathbb{R}^4\times\Sigma$.}
Let $\mathbf{z}=(z_{1},\dots,z_{4})\in\mathbb{R}^4$ and set
$
\xi = \tfrac12 - \tfrac14 |\mathbf{z}|^{2}.
$
Define the lifted function
\begin{equation}\label{eq: lift_function_definition}
\mathbf{u}(\mathbf{z},x) := u\!\left( \tfrac12 - \tfrac14 |\mathbf{z}|^{2},\, x \right),
\quad (\mathbf{z},x)\in \Omega := \{\,|\mathbf{z}|<\sqrt{2}\,\} \times \Sigma.
\end{equation}
A direct computation gives
$$
\partial_{z_i} \mathbf{u} = -\tfrac12 u_{\xi} z_{i},\quad
\partial^2_{z_i z_i} \mathbf{u} = \tfrac14 z_{i}^{2} u_{\xi\xi} - \tfrac12 u_{\xi},
$$
$$
|\nabla_{\mathbf{z}}\mathbf{u}|^{2} = \bigl(\tfrac12 - \xi \bigr) u_{\xi}^{2}, \quad
\Delta_{\mathbf{z}}\mathbf{u} = \bigl( \tfrac12 - \xi \bigr) u_{\xi\xi} - 2 u_{\xi}, \quad
\sum_{i=1}^4 -\tfrac12 z_{i} \partial_{z_i} \mathbf{u} = \bigl( \tfrac12 - \xi \bigr) u_{\xi}.
$$
Substituting these into \eqref{eq:PDE in u}, the equation for $\mathbf{u}$ on $\Omega$ becomes
\begin{equation}\label{eq:lifted equation for regularity}
    \Delta_x \mathbf{u} + \psi\Delta_{\mathbf{z}}e^{\mathbf{u}} - \tfrac{1}{2}\big(2\psi_{\xi} +  a \psi\big) e^{ \mathbf{u}}\, \mathbf{z}\cdot\nabla_{\mathbf{z}}\mathbf{u} + 2 K_{\Sigma} (e^{\mathbf{u}} - 1) = 0.
\end{equation}
By Lemma \ref{lem:maximum principle for u}, we know $e^{\mathbf{u}}$ is bounded from above and below by positive constants. It is also direct to apply maximum principle on \eqref{eq:lifted equation for regularity} to see that 
$$\sup\limits_{\Omega}\mathbf{u}\leq \sup\limits_{\partial\Omega} \mathbf{u}^+, \quad \inf\limits_{\Omega}\mathbf{u}\geq \inf\limits_{\partial\Omega} -\mathbf{u}^-.$$ 
The boundary $\{\xi=0\}\times\Sigma$ in $[0,\tfrac12]\times\Sigma$ corresponds to 
$\Sigma\times S^{3}(\sqrt{2}) \subset \partial\Omega$, and
$$
c_{1} |u|_{C^{m,\alpha}(\partial\Omega)} \le |\mathbf{u}|_{C^{m,\alpha}(\partial\Omega)} \le c_{2} |u|_{C^{m,\alpha}(\partial\Omega)},
$$
with constants $c_{1},c_{2}$ depending only on the change of variables.

\medskip\noindent
\textbf{Step 2: Regularity for $\mathbf{u}$.} 
Written in divergence form, \eqref{eq:lifted equation for regularity} becomes
\begin{equation}\label{eq:lifted equation divergence form}
    \Delta_x \mathbf{u} 
+ \operatorname{div}_{\mathbf{z}}\!\big(\psi e^{\mathbf{u}} \nabla_{\mathbf{z}} \mathbf{u}\big)
- e^{\mathbf{u}} \nabla_{\mathbf{z}} \psi \cdot \nabla_{\mathbf{z}} \mathbf{u}
- \tfrac{1}{2}\big(2\psi_{\xi} +  a \psi\big) e^{\mathbf{u}}\, \mathbf{z}\cdot\nabla_{\mathbf{z}}\mathbf{u}
+ 2 K_{\Sigma} \big(e^{\mathbf{u}} - 1\big) = 0.
\end{equation}
Denote $\Omega' := \{\,|\mathbf{z}|<1\,\} \times \Sigma\Subset\Omega$. We may freeze the $C^0$ function $e^{\mathbf{u}}$ in \eqref{eq:lifted equation divergence form}.
By the De Giorgi–Nash-Moser interior and boundary estimates Theorem 8.32 and Theorem 8.33 in \cite{GilbargTrudinger}, applied to the uniformly elliptic divergence-form PDE \eqref{eq:lifted equation divergence form}, we have
$$
|\mathbf{u}|_{C^{1,\alpha}(\overline{\Omega'})}
\le C\bigl( 1 + \|\mathbf{u}\|_{C^0(\overline{\Omega})}  \bigr)
\le C,
$$
$$
|\mathbf{u}|_{C^{1,\alpha}(\overline{\Omega})}
\le C\bigl( 1 + \|\mathbf{u}\|_{C^0(\overline{\Omega})} + |\mathbf{u}|_{C^{1,\alpha}(\partial\Omega)} \bigr)
\le C.
$$
Then, by interior and boundary Schauder estimates, we may use the nonlinearity of the PDE \eqref{eq:lifted equation for regularity} to bootstrap to a global $C^{m,\alpha}(\overline{\Omega})$ estimate, depending on $|\mathbf{u}|_{C^0(\overline{\Omega})},|\mathbf{u}|_{C^{m,\alpha}(\partial\Omega)}$, and an arbitrary $C^{n}(\overline{\Omega'})$ interior estimate, which depends only on $n,|\mathbf{u}|_{C^0(\overline{\Omega})}$.

\medskip\noindent
\textbf{Step 3: Reduction to $u$ and regularity near $\xi=\frac{1}{2}$.}
 By the following reduction lemma, for any $n\in\mathbb{N}$, we have $|u|_{C^{n}({[\frac{1}{4},\frac{1}{2}]\times\Sigma})} \le C_n\,|\mathbf{u}|_{C^{2n}(\overline{\Omega'})} \le C_n$. Away from the locus $\mathbf{z}=0$, the Jacobian matrix of the change of variables is uniformly controlled, so $$
 |u|_{C^{m,\alpha}([0,\frac{3}{8}]\times\Sigma)} \le C|\mathbf{u}|_{C^{m,\alpha}(\overline{\Omega})} \le C.
$$ It follows that $|u|_{C^{m,\alpha}([0,\frac{1}{2}]\times\Sigma)}\leq C$.
\end{proof}

\begin{lemma}\label{lem:half regularity}
Let $f:[0,R^2]\to\mathbb{R}$ and define $F:\overline{B_R(0)}\subset\mathbb{R}^n\to\mathbb{R}$ by $F(v)=f(|v|^2)$. 
Assume $F\in C^{2m}(\overline{B_R(0)})$ for some $m\in\mathbb{N}$. Then $f\in C^{m}([0,R^2])$, and 
$
|f|_{C^{m}([0,R^2])}\leq C\,|F|_{C^{2m}(\overline{B_R(0)})}.
$
\end{lemma}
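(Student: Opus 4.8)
The plan is to induct on $m$, using a restriction-and-division device that trades two orders of regularity of $F$ for one order of regularity of $f$. The case $m=0$ is immediate, since $f(t)=F(\sqrt t\,e_1)$ is a composition of continuous maps, giving $f\in C^0([0,R^2])$ with $|f|_{C^0}\le|F|_{C^0}$.

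For the inductive step I assume the statement with $m$ replaced by $m-1$, valid in every dimension and for every radius. Given $F\in C^{2m}(\overline{B_R(0)})$ with $F(v)=f(|v|^2)$, the first observation is that $F$ is even in $v_1$, so $\partial_{v_1}F$ is odd in $v_1$ and vanishes on $\{v_1=0\}$. Writing $v=(v_1,v')$ and integrating $\partial_{v_1}^2F$ from $0$ to $v_1$ gives
\[
\partial_{v_1}F(v)=v_1\int_0^1\partial_{v_1}^2F(\tau v_1,v')\,d\tau ,
\]
so that $G(v):=\partial_{v_1}F(v)/v_1=\int_0^1\partial_{v_1}^2F(\tau v_1,v')\,d\tau$ extends to an element of $C^{2m-2}(\overline{B_R(0)})$ with $|G|_{C^{2m-2}}\le C\,|F|_{C^{2m}}$. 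Away from the origin $f$ is smooth and $\partial_{v_1}F(v)=2v_1f'(|v|^2)$, hence $G(v)=2f'(|v|^2)$ for $v\neq 0$; continuity of $G$ then forces $\lim_{t\to0^+}f'(t)$ to exist, so that $f'$ extends continuously to $[0,R^2]$ and $G$ is radial, $G(v)=\widetilde G(|v|^2)$ with $\widetilde G=2f'$ on $[0,R^2]$. Applying the inductive hypothesis to $G$ yields $\widetilde G=2f'\in C^{m-1}([0,R^2])$ with $|\widetilde G|_{C^{m-1}}\le C\,|G|_{C^{2m-2}}\le C\,|F|_{C^{2m}}$.

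It remains to upgrade this to regularity of $f$ itself at the endpoint $t=0$: since $f\in C^1((0,R^2])\cap C^0([0,R^2])$ with $f'$ extending continuously to $0$, letting $\varepsilon\to0$ in $f(t)-f(\varepsilon)=\int_\varepsilon^t f'$ gives $f(t)-f(0)=\int_0^t f'$, so $f$ is one-sidedly differentiable at $0$ with derivative $\tfrac12\widetilde G(0)$; combined with $f'=\tfrac12\widetilde G\in C^{m-1}([0,R^2])$ this yields $f\in C^m([0,R^2])$ and $|f|_{C^m}\le|f|_{C^0}+|f'|_{C^{m-1}}\le C\,|F|_{C^{2m}}$. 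The only step that is not routine is the division by $v_1$: dividing a generic $C^{2m-1}$ function by $v_1$ need not even produce a continuous function, and it is precisely the odd parity of $\partial_{v_1}F$ in $v_1$ that makes the quotient lie in $C^{2m-2}$, so the cost is exactly one derivative. I expect this parity point to be the only delicate matter; the bootstrapping and the tracking of the constant $C=C(m,n,R)$ are standard.
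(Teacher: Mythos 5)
Your proof is correct, and it takes a genuinely different route from the paper's. The paper restricts $F$ to a ray through the origin, obtaining the even one-variable function $g(t)=F(te)=f(t^2)$, notes that all odd derivatives of $g$ vanish at $0$, subtracts the $2m$-th Taylor polynomial, and then writes $f(s)$ explicitly via the integral form of the Taylor remainder, $f(s)=\frac{1}{(2m-1)!}\int_0^{\sqrt{s}}(\sqrt{s}-\tau)^{2m-1}g^{(2m)}(\tau)\,d\tau$; the derivatives $f^{(q)}$ are then computed and estimated directly from this closed formula by an induction on $q$. You instead run an induction on $m$ in the ambient ball, at each stage dividing $\partial_{v_1}F$ by $v_1$ — legitimate precisely because $\partial_{v_1}F$ is odd in $v_1$, so Hadamard's device $\partial_{v_1}F(v)=v_1\int_0^1\partial_{v_1}^2F(\tau v_1,v')\,d\tau$ exhibits the quotient $G=2f'(|\cdot|^2)$ as an element of $C^{2m-2}$ with the right norm bound — and then you recurse on $G$. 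Both arguments hinge on the same parity observation (that is the "one delicate matter" you correctly flag), but yours trades the paper's explicit remainder formula and term-by-term estimate for a cleaner two-derivatives-for-one bootstrap; the paper's version has the minor advantage of producing the formula for $f^{(q)}$ and the vanishing $f^{(q)}(0)=0$ explicitly, while yours generalizes more transparently (e.g.\ to statements where only $f'$ is needed). Your small imprecision — "away from the origin $f$ is smooth" should read "$f\in C^{2m}((0,R^2])$", which is all the chain-rule step requires — does not affect the argument, and the endpoint upgrade via $f(t)-f(0)=\int_0^tf'$ is handled correctly.
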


\begin{proof}
Assume $m\ge1$. Fix any $e\in S^{n-1}$ and define $g(t):=F(te)=f(t^2)$, which is an even $C^{2m}$ function on $[-R,R]$. 
Thus, all odd derivatives of $g$ vanish at $0$. 
By subtracting the $2m$-th Taylor polynomial centered at $0$, we may assume 
$g(0)=\cdots=g^{(2m)}(0)=0$. 
Applying the integral remainder theorem to $g(t)$ gives 
$f(s)=\frac{1}{(2m-1)!}\int_{0}^{\sqrt{s}}\bigl(\sqrt{s}-\tau\bigr)^{2m-1}g^{(2m)}(\tau)\,d\tau$, with $s=t^2$. By induction on $q$, one shows that for any $1\le q\le m$, 
$f^{(q)}(0)=0$, 
$f^{(q)}(s)=\sum_{j=1}^{q}C_{q,j}\int_{0}^{\sqrt{s}} s^{\frac{j}{2}-q}\bigl(\sqrt{s}-\tau\bigr)^{2m-1-j}g^{(2m)}(\tau)\,d\tau$ when $s>0$, 
$\lim\limits_{s\to0^+}f^{(q)}(s)=f^{(q)}(0)$, and
$$
|f^{(q)}(s)|
\le \sum_{j=1}^{q} |C_{q,j}|\sqrt{s}\cdot s^{\frac{j}{2}-q}\cdot \sqrt{s}^{\,2m-1-j}\cdot \sup\limits_{\tau \in [-R,R]}|g^{(2m)}(\tau)|
= \sum_{j=1}^{q}|C_{q,j}|\,s^{m-q}\cdot \sup\limits_{\tau\in [-R,R]}|g^{(2m)}(\tau)|.
$$
The conclusion follows.
\end{proof}

\medskip

\subsection{\texorpdfstring{Existence theory when $\ttg \geq 1$}{Existence theory when ttg >= 1}}

\begin{theorem}
Fix any $m\geq 2,\alpha\in (0,1)$. For any $\varphi\in C^{m,\alpha}(\{0\}\times\Sigma)$, there exists a unique $u\in C^{m,\alpha}([0,\tfrac{1}{2}]\times\Sigma)\cap C^{\infty}([\tfrac{1}{4},\tfrac{1}{2}]\times\Sigma)$ satisfying \eqref{eq:PDE in u} and \eqref{eq:bc_PDE in u}.
\end{theorem}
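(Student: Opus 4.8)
The plan is to transplant the problem, via the lift $\xi=\tfrac12-\tfrac14|\mathbf z|^{2}$ already used in the proof of Theorem~\ref{thm:higher_regularity_estimates}, to a \emph{non-degenerate} quasilinear Dirichlet problem on the compact manifold-with-boundary $\overline\Omega=\{\,|\mathbf z|\le\sqrt2\,\}\times\Sigma\subset\mathbb R^{4}\times\Sigma$: there the bolt $\{\xi=\tfrac12\}$ has become the harmless interior point $\mathbf z=0$, and the only boundary is $\partial\Omega=S^{3}(\sqrt2)\times\Sigma$, the lift of $\{\xi=0\}\times\Sigma$. Writing $\phi:=\varphi-\overline w|_{\xi=0}$ and lifting it to the (necessarily $O(4)$-invariant) boundary datum on $\partial\Omega$, I would look for an $O(4)$-invariant $\mathbf u\in C^{m,\alpha}(\overline\Omega)$ with $\mathbf u|_{\partial\Omega}=\phi$ solving \eqref{eq:lifted equation for regularity}; since $\psi>0$ on $[0,\tfrac12]$, $e^{\mathbf u}>0$ by Lemma~\ref{lem:maximum principle for u}, and $12k^{3}+\xi^{3}\ne0$ for admissible $k$, the leading operator $\Delta_{x}+\psi e^{\mathbf u}\Delta_{\mathbf z}$ is uniformly elliptic on $\overline\Omega$ with smooth coefficients. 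Any such $\mathbf u$ is a function of $(|\mathbf z|^{2},x)$, hence by Lemma~\ref{lem:half regularity} descends to $u=w-\overline w$ solving \eqref{eq:PDE in u}--\eqref{eq:bc_PDE in u}, while the interior estimate of Theorem~\ref{thm:higher_regularity_estimates} near $\mathbf z=0$ supplies the claimed $C^{\infty}$-regularity on $[\tfrac14,\tfrac12]\times\Sigma$.

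For \textbf{existence} I would invoke the Leray--Schauder degree theorem for quasilinear elliptic Dirichlet problems (e.g.\ Gilbarg--Trudinger, Thm.~11.6 and its corollaries): along the homotopy in which the boundary datum is replaced by $t\phi$, $t\in[0,1]$, the problem at $t=0$ has the single solution $\mathbf u\equiv0$ (unique by the maximum principle for \eqref{eq:lifted equation for regularity}, and nondegenerate since the linearization at $\mathbf u\equiv0$ has zeroth-order coefficient $2K_{\Sigma}\le0$), and the uniform a~priori bound required to deform this to $t=1$ is precisely the $C^{1,\alpha}(\overline\Omega)$ --- indeed the full $C^{m,\alpha}(\overline\Omega)$ --- estimate furnished by Lemma~\ref{lem:maximum principle for u} and Theorem~\ref{thm:higher_regularity_estimates}, which is uniform in $t$ because $\|t\phi\|_{C^{m,\alpha}}\le\|\phi\|_{C^{m,\alpha}}$. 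Equivalently one may run the classical continuity method, with openness coming from the implicit function theorem once the linearized operator --- a uniformly elliptic second-order operator on $\overline\Omega$ with the Dirichlet condition, hence Fredholm of index zero --- is shown to be injective on $O(4)$-invariant functions; injectivity would follow by pairing $L\varphi=0$ with $\varphi$, integrating by parts, using the equation to replace the zeroth-order coefficient by $\rho(2K_{\Sigma}-\Delta_{\Sigma}u)$, and then absorbing that term using the Dirichlet condition on \emph{all} of $\partial\Omega$ and the Poincaré inequality on $\Omega$.

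The genuine difficulty is \textbf{uniqueness}, since \eqref{eq:twisted toda final} is neither variational nor ``proper'': it couples $\Delta_{\Sigma}$ acting on $w=u+\overline w$ with $\partial_{\xi}^{2}$ acting on $e^{w}$, so no single comparison function handles both blocks at once. I would argue by the maximum principle for the difference of two solutions. If $u_{1},u_{2}$ solve \eqref{eq:PDE in u} with the same boundary value and $v:=u_{1}-u_{2}$ attains a positive interior maximum at $q_{0}$ (necessarily with $\xi$-coordinate in $(0,\tfrac12]$, since $v|_{\xi=0}=0$), then subtracting the equations, writing $e^{u_{1}}-e^{u_{2}}=e^{u_{2}}(e^{v}-1)$, using $\nabla v(q_{0})=0$ and $\mathrm{Hess}\,v(q_{0})\le0$, and substituting the equation for $u_{2}$, everything collapses --- at an interior point $\xi_0<\tfrac12$ --- to
\[
0=\Delta_{\Sigma}v(q_{0})+e^{\overline w}e^{u_{2}}e^{v}\,v_{\xi\xi}(q_{0})+\bigl(e^{v(q_{0})}-1\bigr)\bigl(2K_{\Sigma}-\Delta_{\Sigma}u_{2}(q_{0})\bigr),
\]
in which the first two terms are $\le0$; the delicate step is to turn the resulting inequality $\Delta_{\Sigma}u_{2}(q_{0})\le2K_{\Sigma}$ into a contradiction. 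For genus $\ttg\ge1$ one has $K_{\Sigma}\le0$, and I expect to close the argument by the perturbation device of Lemma~\ref{lem:maximum principle for u} (replacing $\overline w$ by the strict sub/supersolution $\overline w_{\delta}$ to gain a strict sign, then letting $\delta\to0$), treating the bolt case $\xi_{0}=\tfrac12$ separately via $e^{\overline w}|_{\xi=1/2}=0$, $(e^{\overline w})_{\xi}|_{\xi=1/2}<0$ and the Hopf-type inequality $v_{\xi}(q_{0})\ge0$; running the same argument at an interior minimum then gives $u_{1}\equiv u_{2}$. Making this last step fully rigorous --- in particular ruling out the borderline case in which $\Delta_{\Sigma}u_{2}$ equals $2K_{\Sigma}$ at the extremum --- is where the real work lies, and is the step I expect to be the main obstacle.
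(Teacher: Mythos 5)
Your lift to $\overline\Omega$ and your existence argument are sound in outline: the $C^{0}$ bound from Lemma~\ref{lem:maximum principle for u} together with the De Giorgi--Nash--Moser and Schauder estimates of Theorem~\ref{thm:higher_regularity_estimates} give the $t$-uniform a priori bounds needed to run Leray--Schauder along the homotopy $t\boldsymbol{\varphi}$, and this is a legitimate (arguably more elementary) alternative to the paper's route to existence. The genuine gaps are in the two places you yourself flag as delicate, and in both cases the proposed fix does not work. First, your argument for injectivity of the linearized operator --- pairing $L_{\mathbf u}\mathbf v=0$ with $\mathbf v$ and absorbing the zeroth-order term $(2K_\Sigma-\Delta_\Sigma u)$ via the Poincar\'e inequality --- fails because $\Delta_\Sigma u$ has no sign and is not small: there is no reason the Poincar\'e constant of $\Omega$ beats $\sup|\Delta_\Sigma u|$, and the operator is neither self-adjoint nor coercive. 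The paper's resolution (Lemma~\ref{lem:no cokernel for linearized operator when genus greater than 1}) is to work with the formal \emph{adjoint} $L_{\mathbf u}^*$ and conjugate it by $e^{q}$ with $q=\log\psi+\int_0^\xi a$; the ODE satisfied by $e^{\overline w}$ then makes the zeroth-order coefficient of $L^*_{\mathbf u,q}$ exactly $2K_\Sigma\le 0$, so the maximum principle kills the adjoint kernel, and Fredholm index zero converts this into invertibility of $L_{\mathbf u}$. Nothing in your proposal substitutes for this computation, and without it you have neither the openness needed for the continuity method nor the local injectivity needed below.

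Second, uniqueness. Your direct comparison of two solutions stalls exactly where you say it does: at an interior maximum of $v=u_1-u_2$ one is left with $0\le (e^{v}-1)\bigl(2K_\Sigma-\Delta_\Sigma u_2\bigr)$, and since $\Delta_\Sigma u_2$ has no sign this yields no contradiction; the $\overline w_\delta$ perturbation of Lemma~\ref{lem:maximum principle for u} only produces a strict sign in the $2K_\Sigma$ term, not in $-\Delta_\Sigma u_2$, so it does not close the borderline case. The paper does not prove uniqueness this way at all. Instead it shows the boundary-restriction map $\mathcal M\to C^{2,\alpha}(\partial\Omega)$ is a \emph{proper local diffeomorphism} (properness from Theorem~\ref{thm:higher_regularity_estimates}, local diffeomorphism from the invertibility of $L_{\mathbf u}$ above), hence a covering map; since the fiber over the zero boundary datum is the single point $\mathbf u\equiv 0$ (by the maximum principle), the covering is trivial and the map is a global diffeomorphism, which delivers uniqueness for every boundary datum. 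So the step you identify as ``the main obstacle'' is indeed an obstacle for your approach, and the way around it is global, not pointwise --- but it hinges on the adjoint-conjugation lemma that your proposal is missing.
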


We will prove a stronger version for the lifted equation. Let 
$\Omega := \{\,\mathbf{z}\in\mathbb{R}^4  \;\big|\; |\mathbf{z}|<\sqrt{2}\,\}\times \Sigma$, 
as in Theorem~\ref{thm:higher_regularity_estimates}. For any function defined on $[0,\tfrac{1}{2})\times\Sigma$, we define its lifting to $\Omega$ as before; in particular, this applies to the functions $a(\xi) = -\tfrac{6\xi^{2}}{12k^{3}+\xi^{3}}$ and $\psi(\xi) = {e^{\overline{w}}}/{(\tfrac{1}{2}-\xi)}$.

\begin{theorem}\label{thm:TypeII_lifted_solution_moduli_space}
For any $\boldsymbol{\varphi}\in C^{2,\alpha}(\overline{\Omega})$, there exists a unique solution 
$\mathbf{u} \in C^{2,\alpha}(\overline{\Omega})$ to the boundary value problem
\begin{align}
\Delta_x \mathbf{u} + \psi\Delta_{\mathbf{z}}e^{\mathbf{u}} - \tfrac{1}{2}\big(2\psi_{\xi} +  a \psi\big) e^{ \mathbf{u}}\, \mathbf{z}\cdot\nabla_{\mathbf{z}}\mathbf{u} + 2 K_{\Sigma} (e^{\mathbf{u}} - 1) &= 0, && \text{in } \Omega, 
\label{eq:TypeII_lifted_pde} \\
\mathbf{u} &= \boldsymbol{\varphi}, && \text{on } \partial\Omega.
\label{eq:bc_TypeII_lifted_pde}
\end{align}
Moreover, the moduli space of solutions
\begin{equation}\label{eq:lifted equation moduli space definition}
\mathcal{M} = \Bigl\{\,\mathbf{u} \in C^{2,\alpha}(\overline{\Omega}) \;\big|\;
\mathbf{u} \text{ solves \eqref{eq:TypeII_lifted_pde}}\,\Bigr\}
\end{equation}
is a smooth Banach manifold, and the boundary restriction map
\begin{equation*}
\mathcal{M} \longrightarrow C^{2,\alpha}(\partial\Omega), 
\qquad \mathbf{u} \mapsto \mathbf{u}|_{\partial\Omega},
\end{equation*}
is a diffeomorphism.
\end{theorem}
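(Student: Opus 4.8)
I would phrase everything in terms of the smooth nonlinear map $P\colon C^{2,\alpha}(\overline\Omega)\to C^{0,\alpha}(\overline\Omega)$ given by the left side of \eqref{eq:TypeII_lifted_pde}, so that $\mathcal M=P^{-1}(0)$. Note first that $P$ is genuinely quasilinear elliptic: since $e^{\overline w}$ has a simple zero at $\xi=\tfrac12$, the lifted coefficient $\psi=e^{\overline w}/(\tfrac12-\xi)$ and (for admissible $k$) $a=-6\xi^2/(12k^3+\xi^3)$ are smooth and $\psi>0$ throughout $\overline\Omega$, so the principal part of $P$ at any $\mathbf u$ is $\Delta_x+\psi e^{\mathbf u}\Delta_{\mathbf z}$, uniformly elliptic. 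The decoupled solution provides the explicit point $\mathbf u\equiv 0\in\mathcal M$ with boundary value $0$, and by the maximum principle recorded in Lemma~\ref{lem:maximum principle for u} (and Step~1 of Theorem~\ref{thm:higher_regularity_estimates}) any $\mathbf u\in\mathcal M$ with $\mathbf u|_{\partial\Omega}=0$ is identically $0$; thus $R^{-1}(0)=\{0\}$ for the restriction map $R\colon\mathcal M\to C^{2,\alpha}(\partial\Omega)$. The plan is: (1) show the linearized Dirichlet problem is an isomorphism at every $\mathbf u\in\mathcal M$, which by the regular-value theorem makes $\mathcal M$ a smooth Banach submanifold with $T_{\mathbf u}\mathcal M=\ker DP_{\mathbf u}$ and makes $R$ a local diffeomorphism; (2) show $R$ is proper; (3) conclude that $R$, being a proper local diffeomorphism onto the contractible (hence simply connected) Banach space $C^{2,\alpha}(\partial\Omega)$, is a covering map, necessarily trivial since one fibre is a single point, hence a diffeomorphism. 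Finally, restricting to $SO(4)$-invariant boundary data and invoking Lemma~\ref{lem:half regularity} recovers Theorem~\ref{thm:existence and uniqueness pde} for $u$ on $[0,\tfrac12)\times\Sigma$.

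\textbf{Properness (step 2).} By Steps~1--2 of Theorem~\ref{thm:higher_regularity_estimates} any $\mathbf u\in\mathcal M$ satisfies $\|\mathbf u\|_{C^{2,\alpha}(\overline\Omega)}\le C\bigl(\|\mathbf u|_{\partial\Omega}\|_{C^{2,\alpha}(\partial\Omega)}\bigr)$ with $C$ non-decreasing (the $C^0$ bound from the maximum principle feeds the De Giorgi--Nash--Moser and Schauder estimates). Hence if $K\subset C^{2,\alpha}(\partial\Omega)$ is compact, $R^{-1}(K)$ is bounded in $C^{2,\alpha}$, so from a sequence $\mathbf u_j\in R^{-1}(K)$ one extracts $\mathbf u_j\to\mathbf u_\infty$ in $C^{2,\alpha'}$ ($\alpha'<\alpha$) with $\mathbf u_\infty$ solving \eqref{eq:TypeII_lifted_pde} and $\mathbf u_\infty|_{\partial\Omega}\in K$; differencing and applying Schauder (uniform ellipticity, coefficients bounded in $C^{0,\alpha}$, $C^0$-smallness from the $C^{2,\alpha'}$-convergence) upgrades this to $C^{2,\alpha}$-convergence, so $R^{-1}(K)$ is compact.

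\textbf{The main obstacle: invertibility of the linearization.} The differential of $P$ at $\mathbf u\in\mathcal M$ is
\[
L_{\mathbf u}v=\Delta_x v+\psi\,\Delta_{\mathbf z}(e^{\mathbf u}v)-\tfrac12(2\psi_\xi+a\psi)\,\mathbf z\cdot\nabla_{\mathbf z}(e^{\mathbf u}v)+2K_\Sigma e^{\mathbf u}v,
\]
a uniformly elliptic operator with $C^{0,\alpha}$ coefficients, hence $L_{\mathbf u}\colon C^{2,\alpha}_0(\overline\Omega)\to C^{0,\alpha}(\overline\Omega)$ (with $C^{2,\alpha}_0$ the subspace vanishing on $\partial\Omega$) is Fredholm of index $0$; so it is an isomorphism iff its kernel is trivial, and once this holds one gets surjectivity of $DP_{\mathbf u}$ on $C^{2,\alpha}(\overline\Omega)$ (solve the Dirichlet problem, prescribe boundary data freely) and the isomorphism property of $\ker DP_{\mathbf u}\to C^{2,\alpha}(\partial\Omega)$, $v\mapsto v|_{\partial\Omega}$, i.e.\ $R$ is a local diffeomorphism. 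The delicate point is that, using the equation $P(\mathbf u)=0$, the zeroth-order coefficient of $L_{\mathbf u}$ works out to $-\Delta_x\mathbf u+2K_\Sigma$, which has non-positive $\Sigma$-average but is \emph{not} pointwise $\le 0$, so the maximum principle does not apply directly for general $\mathbf u$ (it does at $\mathbf u=0$, where the coefficient is exactly $2K_\Sigma\le 0$). The natural route I would pursue is the energy identity: multiply $L_{\mathbf u}v=0$, $v|_{\partial\Omega}=0$, by $v$ and integrate over $\Omega$; the $\partial\Omega$-boundary terms vanish, while in the $\xi$-variable the only boundary contribution at the degenerate locus $\xi=\tfrac12$ is $\tfrac12\int_\Sigma\bigl(-(e^{\overline w})_\xi(\tfrac12)\bigr)v(\tfrac12,\cdot)^2\,d\mathrm{vol}_\Sigma\ge 0$ thanks to the simple zero of $e^{\overline w}$; this yields an identity of the form $\int_\Omega\bigl(|\nabla_x v|^2+\psi e^{\mathbf u}|\nabla_{\mathbf z}v|^2\bigr)+(\text{nonnegative boundary term})=\int_\Omega Z\,v^2$, and the whole difficulty is reduced to controlling the sign of the scalar $Z$ (built from $\overline w,a,K_\Sigma$ and, after inserting the equation, from $-\tfrac12\Delta_\Sigma w$). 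I expect this last point to be the genuine hard part: $Z\le 0$ is clear near the decoupled solution but not for arbitrarily large boundary data, so either one needs a sharper weighted test function, or one invokes the geometric input the paper develops separately — the positivity $W>0$ proved by the geometric continuity argument, or $\mathbb S^1$-equivariant Poincaré--Einstein deformation theory (an element of $\ker L_{\mathbf u}$ being an infinitesimal conformally Kähler Einstein deformation fixing the conformal infinity, unobstructed in the Type~I case by Biquard--Rollin \cite{BiquardRollin2009} and, via Biquard's rigidity \cite{biquard2008continuationunique}, in this symmetric setting) — to force triviality of the kernel. Granting this, the covering-space argument of step~(3) closes the proof.
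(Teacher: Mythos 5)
Your overall architecture (regular value theorem, properness via the a priori estimates of Theorem~\ref{thm:higher_regularity_estimates}, proper local diffeomorphism onto a simply connected Banach space is a covering, one fibre is a point, hence a global diffeomorphism) matches the paper's proof, and your computation that the zeroth-order coefficient of $L_{\mathbf u}$ reduces to $-\Delta_x\mathbf u+2K_\Sigma$ after inserting the equation is correct. However, there is a genuine gap at exactly the point you flag yourself: you do not establish triviality of $\ker L_{\mathbf u}$ (with zero Dirichlet data) at an \emph{arbitrary} solution $\mathbf u$. Your proposed energy identity leaves the sign of the zeroth-order quantity $Z$ undetermined, and you explicitly concede this for large boundary data; the alternative routes you sketch ($W>0$ from the separate continuity argument, or Einstein deformation theory) are not carried out and would in any case risk circularity, since the paper's proof that $W>0$ uses the openness of the solution set, which itself relies on the invertibility you are trying to prove. "Granting this" is precisely granting the hard step, so the proof is incomplete as written.

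The missing idea in the paper (Lemma~\ref{lem:no cokernel for linearized operator when genus greater than 1}) is to work with the \emph{formal adjoint} $L_{\mathbf u}^*$ rather than $L_{\mathbf u}$ itself, and to conjugate it by $e^{q}$ with the $\mathbf u$-independent function
$$
q=\log\psi+\int_0^\xi a(\tau)\,d\tau .
$$
A direct computation, using the ODE $(e^{\overline w})_{\xi\xi}+a\,(e^{\overline w})_\xi+b\,e^{\overline w}=2K_\Sigma$ satisfied by the decoupled profile $e^{\overline w}=(\tfrac12-\xi)\psi$, shows that the zeroth-order coefficient of $e^{-q}L_{\mathbf u}^*(e^{q}\,\cdot)$ collapses to exactly $2K_\Sigma\le 0$ (this is where $\ttg\ge 1$ enters), while the factor $e^{\mathbf u}$ only multiplies the elliptic part and the drift term and therefore does not affect the sign. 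The maximum principle then kills the cokernel for \emph{every} $\mathbf u$, and the index-zero Fredholm property (which you correctly established via homotopy to the flat Laplacian) converts triviality of the cokernel into triviality of the kernel. Without this conjugation trick, or a substitute for it, the local diffeomorphism property of the restriction map is not proved, and the covering-space argument cannot be launched.
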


\begin{proof}
Define 
$\Phi: C^{2,\alpha}(\overline{\Omega}) \to C^{\alpha}(\overline{\Omega}) \times C^{2,\alpha}(\partial\Omega)$ 
by 
$$\Phi(\mathbf{u}) = (\Delta_x \mathbf{u} + \psi\Delta_{\mathbf{z}}e^{\mathbf{u}} - \tfrac{1}{2}\big(2\psi_{\xi} +  a \psi\big) e^{ \mathbf{u}}\, \mathbf{z}\cdot\nabla_{\mathbf{z}}\mathbf{u} + 2 K_{\Sigma} (e^{\mathbf{u}} - 1),\; \mathbf{u}|_{\partial\Omega}).$$  
This is a smooth map between Banach spaces. Its derivative at $\mathbf{u}$ is 
$D\Phi_{\mathbf{u}}(\mathbf{v}) = (L_{\mathbf{u}} \mathbf{v},\; \mathbf{v}|_{\partial\Omega})$, 
where \begin{equation}\label{eq:linearized operator general case}
\begin{aligned}
L_{\mathbf{u}}\mathbf{v}
&= \Delta_x \mathbf{v} 
   + \psi \Delta_{\mathbf{z}}\!\big(e^{\mathbf{u}}\mathbf{v}\big)
   - \tfrac{1}{2}\big(2\psi_\xi + a \psi\big)\mathbf{z}\cdot\nabla_{\mathbf{z}}(e^{\mathbf{u}}\mathbf{v})
   + 2K_{\Sigma}e^{\mathbf{u}}\mathbf{v}.
\end{aligned}
\end{equation}

\textbf{Step 1. Fredholm property.}  

We claim that for any $\mathbf{u} \in C^{2,\alpha}(\overline{\Omega})$, the operator $D\Phi_{\mathbf{u}} : C^{2,\alpha}(\overline{\Omega}) \to C^{\alpha}(\overline{\Omega}) \times C^{2,\alpha}(\partial\Omega)$   is Fredholm of index $0$. To see this, observe that $L_{\mathbf{u}}$ is homotopic to the Laplacian $\Delta_x+\Delta_{\mathbf{z}}$ via the family $(1-\sigma)L_{\mathbf{u}} + \sigma(\Delta_x+\Delta_{\mathbf{z}})$ with $\sigma \in [0,1]$. Since the Fredholm index is invariant under continuous homotopies of elliptic operators, $D\Phi_{\mathbf{u}}$ has the same index as $(\Delta_x+\Delta_{\mathbf{z}},\; \cdot|_{\partial\Omega})$, which is $0$.

\medskip

\textbf{Step 2. Invertibility of the linearized operator.}  

In the following Lemma \ref{lem:no cokernel for linearized operator when genus greater than 1},  we will show that for any $\mathbf{u}\in C^{2,\alpha}(\overline{\Omega})$, $D\Phi_{\mathbf{u}}$ is surjective. This is equivalent to say
the formal adjoint operator $L_{\mathbf{u}}^*$ of  $L_{\mathbf{u}}$ has no kernel with vanishing Dirichlet boundary data. Granting Lemma \ref{lem:no cokernel for linearized operator when genus greater than 1}, since $D\Phi_{\mathbf{u}}$ is of Fredholm index 0, we know $D\Phi_{\mathbf{u}}$ has no kernel as well. 
This implies the inequality  
$$|\mathbf{v}|_{C^{2,\alpha}(\overline{\Omega})} \leq C \big( |L_{\mathbf{u}} \mathbf{v}|_{C^{\alpha}(\overline{\Omega})} + |\mathbf{v}|_{C^{2,\alpha}(\partial\Omega)} \big).$$
To see the inequality, we proceed with contradiction. If not, then we could find a sequence $\mathbf{v}_j$ such that $|\mathbf{v}_j|_{C^{2,\alpha}(\overline{\Omega})}=1$ and $|L_{\mathbf{u}} \mathbf{v}_j|_{C^{\alpha}(\overline{\Omega})}\rightarrow 0, |\mathbf{v}_j|_{C^{2,\alpha}(\partial\Omega)} \rightarrow 0$. Hence a subsequence $\mathbf{v}_{j^*}$ converges in $C^{2,\beta}$ to some $\mathbf{v}_\infty \in C^{2,\beta}$ for $\beta\in(0,\alpha)$. Then $L_{\mathbf{u}}\mathbf{v}_\infty=0$ and $\mathbf{v}_\infty|_{\partial\Omega}=0$, which implies $\mathbf{v}_\infty=0$. Thus $|\mathbf{v}_{j^*}|_{C^{2,\beta}(\overline{\Omega})}\rightarrow 0$. By Schauder estimates we would have  
$|\mathbf{v}_{j^*}|_{C^{2,\alpha}(\overline{\Omega})}\leq C(|L_{\mathbf{u}} \mathbf{v}_j|_{C^{\alpha}(\overline{\Omega})} + |\mathbf{v}_j|_{C^{2,\alpha}(\partial\Omega)} +|\mathbf{v}_j|_{C^0(\overline{\Omega})})\rightarrow 0$,  
a contradiction to $|\mathbf{v}_{j^*}|_{C^{2,\alpha}(\overline{\Omega})}=1$.

Thus $D\Phi_{\mathbf{u}}$ is bijective and its inverse is bounded.

\medskip

\textbf{Step 3. Structure of the solution set.}  

Define $\mathcal{M} := \Phi^{-1}(\{0\}\times C^{2,\alpha}(\partial\Omega))$. This is exactly the same as the $\mathcal{M}$ defined in \eqref{eq:lifted equation moduli space definition}. By the implicit function theorem for Banach spaces, $\mathcal{M}$ is a smooth Banach submanifold of $C^{2,\alpha}(\overline{\Omega})$.

\medskip

\textbf{Step 4. Properness of the boundary restriction.}  

We prove that the map $\Phi|_{\mathcal{M}} : \mathcal{M} \to C^{2,\alpha}(\partial\Omega)$ given by boundary restriction is proper. Suppose we have a sequence of pairs $(\mathbf{u}_j,\boldsymbol{\varphi_j})$ solving \eqref{eq:TypeII_lifted_pde}-\eqref{eq:bc_TypeII_lifted_pde}
with $\boldsymbol{\varphi_j} \to \boldsymbol{\varphi_\infty}$ in $C^{2,\alpha}(\partial\Omega)$. By Theorem~\ref{thm:higher_regularity_estimates}, there is a uniform bound $|\mathbf{u}_j|_{C^{2,\alpha}(\overline{\Omega})} \leq C$. Fix $\beta \in (0,\alpha)$. Since the embedding $C^{2,\alpha}(\overline{\Omega}) \hookrightarrow C^{2,\beta}(\overline{\Omega})$ is compact, there exists a subsequence $\mathbf{u}_{j^*}$ converging to some $\mathbf{u}_\infty$ in $C^{2,\beta}(\overline{\Omega})$. 

Denote the linear operators $L'_{\mathbf{u}_{j^*}}, L'_{\mathbf{u}_{\infty}}$ by $$L'_{\mathbf{u}_{j^*}} = \Delta_x + \psi \mathrm{div}_z(e^{\mathbf{u}_{j^*}}\nabla_\mathbf{z})- \tfrac{1}{2}\big(2\psi_\xi + a \psi\big)\mathbf{z}\cdot\nabla_{\mathbf{z}}(e^{\mathbf{u}_{j^*}}\cdot)
   + 2K_{\Sigma}e^{\mathbf{u}_{j^*}},$$
$$L'_{\mathbf{u}_{\infty}} = \Delta_x + \psi\mathrm{div}_\mathbf{z}(e^{\mathbf{u}_{\infty}}\nabla_\mathbf{z})- \tfrac{1}{2}\big(2\psi_\xi + a \psi\big)\mathbf{z}\cdot\nabla_{\mathbf{z}}(e^{\mathbf{u}_{\infty}}\cdot)
   + 2K_{\Sigma}e^{\mathbf{u}_{\infty}}.$$
Since $\mathbf{u}_{j^*}\to \mathbf{u}_\infty$ in $C^{2,\beta}$, the coefficients of $L'_{\mathbf{u}_{j^*}}$, written in non-divergence form, converge in $C^\alpha$ (in fact in $C^{1,\beta}$) to the coefficients of $L'_{\mathbf{u}_\infty}$. Consider $\eta = \mathbf{u}_{j^*}-\mathbf{u}_\infty$. Then $\eta$ satisfies $L'_{\mathbf{u}_\infty} \eta = (L'_{\mathbf{u}_\infty}-L'_{\mathbf{u}_{j^*}}) \mathbf{u}_{j^*}$. By Schauder estimates applied to $\eta$, together with the uniform bound $|\mathbf{u}_{j^*}|_{C^{2,\alpha}(\overline{\Omega})}\leq C$, we conclude $\eta \to 0$ in $C^{2,\alpha}(\overline{\Omega})$. Therefore $\mathbf{u}_{j^*}\to \mathbf{u}_\infty$ in $C^{2,\alpha}(\overline{\Omega})$, which proves properness.

\medskip

\textbf{Step 5. Global uniqueness with zero boundary data.} 

By maximum principle, we know that any $\mathbf{u}\in \mathcal{M}$ with $\Phi(\mathbf{u})=0$ implies $\mathbf{u}=0$.

\medskip

\textbf{Final conclusion.}  

The map $\Phi|_{\mathcal{M}} : \mathcal{M} \to C^{2,\alpha}(\partial\Omega)$ is proper and a local diffeomorphism. Since $\mathcal{M},C^{2,\alpha}(\partial\Omega)$ are metric spaces, and $\mathcal{M}$ is non-empty, we conclude that $\Phi|_{\mathcal{M}}$ is a finite-sheeted covering map. Because $(\Phi|_{\mathcal{M}} )^{-1}(0)=\{0\}$, we conclude $\Phi|_{\mathcal{M}} : \mathcal{M} \to C^{2,\alpha}(\partial\Omega)$ is a diffeomorphism.
\end{proof}

\begin{lemma}\label{lem:no cokernel for linearized operator when genus greater than 1}
    Let $L_{\mathbf{u}}^*$ denote the formal adjoint of the operator $L_{\mathbf{u}}$ defined in \eqref{eq:linearized operator general case}. Then the boundary value problem for $\mathbf{w}\in C^2(\overline{\Omega})$
    \begin{equation*}
        L_{\mathbf{u}}^* \mathbf{w} = 0 \quad \text{in } \Omega, 
        \qquad \mathbf{w} = 0 \quad \text{on } \partial\Omega
    \end{equation*}
    only has the trivial solution $\mathbf{w}\equiv 0$.
\end{lemma}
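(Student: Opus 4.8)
The plan is to identify $L_{\mathbf{u}}^{*}$, after a harmless substitution, with a uniformly elliptic operator whose zeroth–order coefficient is nonpositive, and then to invoke the weak maximum principle. The hypothesis $\ttg\geq 1$ enters exactly through $K_{\Sigma}\leq 0$, and the whole argument applies to an arbitrary $\mathbf{u}\in C^{2,\alpha}(\overline{\Omega})$ (the equation $\mathbf{u}$ is assumed to satisfy is never used).

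The decisive point, which I would establish first, is that the $\mathbf{z}$–part of $L_{\mathbf{u}}$ is, up to multiplication by a function, a divergence–form operator whose weights do not involve $\mathbf{u}$. Set $\rho(\xi):=12k^{3}+\xi^{3}$; for an admissible tuple this is nowhere zero on $[0,\tfrac12]$, so $a$ and $c:=\tfrac12(2\psi_{\xi}+a\psi)$ are smooth there. Let $\mu,\nu$ be the positive functions of $\xi$ determined by $(\log\mu)_{\xi}=2c/\psi$ and $\nu:=\mu/\psi$ (explicitly $\mu=\psi^{2}/\rho^{2}$, $\nu=\psi/\rho^{2}$ if $k\neq\pm\infty$, and $\mu=\psi^{2}$, $\nu=\psi$ if $k=\pm\infty$); lifted to $\overline{\Omega}$ via $\xi=\tfrac12-\tfrac14|\mathbf{z}|^{2}$ they are smooth and bounded away from $0$. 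Using that a function of $\xi$ alone has $\nabla_{\mathbf{z}}(\cdot)=-\tfrac12\mathbf{z}\,\partial_{\xi}(\cdot)$, one checks the identity
\[
\psi\,\Delta_{\mathbf{z}}f-c\,\mathbf{z}\cdot\nabla_{\mathbf{z}}f=\nu^{-1}\,\mathrm{div}_{\mathbf{z}}\bigl(\mu\,\nabla_{\mathbf{z}}f\bigr),
\]
so that $L_{\mathbf{u}}\mathbf{v}=\Delta_{x}\mathbf{v}+\nu^{-1}\mathrm{div}_{\mathbf{z}}\bigl(\mu\,\nabla_{\mathbf{z}}(e^{\mathbf{u}}\mathbf{v})\bigr)+2K_{\Sigma}e^{\mathbf{u}}\mathbf{v}$. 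Since $\Delta_{x}$ is self–adjoint on the closed surface $\Sigma$ and the two $\mathbf{z}$–integrations by parts produce no boundary terms ($\mathbf{z}=0$ is an interior point and test functions vanish on $\partial\Omega$), the natural $L^{2}$ adjoint is
\[
L_{\mathbf{u}}^{*}\mathbf{w}=\Delta_{x}\mathbf{w}+e^{\mathbf{u}}\,\mathrm{div}_{\mathbf{z}}\bigl(\mu\,\nabla_{\mathbf{z}}(\nu^{-1}\mathbf{w})\bigr)+2K_{\Sigma}e^{\mathbf{u}}\mathbf{w}.
\]

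Given now $\mathbf{w}\in C^{2}(\overline{\Omega})$ with $L_{\mathbf{u}}^{*}\mathbf{w}=0$ and $\mathbf{w}|_{\partial\Omega}=0$, I would set $\zeta:=\nu^{-1}\mathbf{w}$. Since $\nu>0$ depends only on $\mathbf{z}$, we have $\zeta\in C^{2}(\overline{\Omega})$, $\zeta|_{\partial\Omega}=0$ and $\Delta_{x}\mathbf{w}=\nu\Delta_{x}\zeta$; dividing the equation by $\nu$ and using the identity above in reverse gives
\[
\Delta_{x}\zeta+\psi e^{\mathbf{u}}\Delta_{\mathbf{z}}\zeta-c\,e^{\mathbf{u}}\,\mathbf{z}\cdot\nabla_{\mathbf{z}}\zeta+2K_{\Sigma}e^{\mathbf{u}}\zeta=0.
\]
This equation is uniformly elliptic on the compact set $\overline{\Omega}$ (principal part $\Delta_{x}+\psi e^{\mathbf{u}}\Delta_{\mathbf{z}}$ with $\psi$ and $e^{\mathbf{u}}$ bounded below by positive constants), has smooth bounded drift $-c\,e^{\mathbf{u}}\mathbf{z}$, and — crucially — zeroth–order coefficient $2K_{\Sigma}e^{\mathbf{u}}\leq 0$ because $\ttg\geq 1$. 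The weak maximum principle (e.g.\ \cite[Theorem~3.1]{GilbargTrudinger}), applied to $\zeta$ and to $-\zeta$ with the vanishing Dirichlet data, then forces $\zeta\equiv 0$, hence $\mathbf{w}=\nu\zeta\equiv 0$.

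I expect the first step to be the main obstacle. Written without this substitution, $L_{\mathbf{u}}^{*}$ (indeed already $L_{\mathbf{u}}$) has a zeroth–order term of indefinite sign — a direct computation shows it is $e^{\mathbf{u}}$ times $\bigl[a+(b-a')(\tfrac12-\xi)\bigr]\psi$, which changes sign on $[0,\tfrac12]$ for every admissible $k$ — so no maximum principle applies directly; and because $\mathbf{u}$ couples the $x$– and $\mathbf{z}$–directions, $L_{\mathbf{u}}$ admits no symmetrizing weight, so a plain energy argument does not obviously help either. Extracting the $\mathbf{u}$–independent divergence structure of the $\mathbf{z}$–part — which lets $e^{\mathbf{u}}$ come out cleanly under the adjoint and then be absorbed into $\zeta=\nu^{-1}\mathbf{w}$ — is precisely what turns this into a problem the maximum principle can handle. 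The remaining work is only the verification of the displayed identity and the elementary check that admissibility keeps $12k^{3}+\xi^{3}$ (hence $\mu,\nu$) away from $0$ on $[0,\tfrac12]$.
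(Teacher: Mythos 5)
Your proof is correct and is essentially the paper's argument in different clothing: your weight $\nu=\psi/\rho^{2}$ coincides (up to a multiplicative constant) with the paper's conformal factor $e^{q}$, $q=\log\psi+\int_0^\xi a(\tau)\,d\tau$, so your substitution $\zeta=\nu^{-1}\mathbf{w}$ is exactly the paper's conjugation $L_{\mathbf{u},q}^{*}=e^{-q}L_{\mathbf{u}}^{*}(e^{q}\cdot)$. The resulting equation $\Delta_{x}\zeta+\psi e^{\mathbf{u}}\Delta_{\mathbf{z}}\zeta-c\,e^{\mathbf{u}}\mathbf{z}\cdot\nabla_{\mathbf{z}}\zeta+2K_{\Sigma}e^{\mathbf{u}}\zeta=0$ and the maximum-principle conclusion are identical to the paper's.
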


\begin{proof}
   
Recall 
$$
L_{\mathbf{u}}\mathbf{v}
= \Delta_x \mathbf{v} 
   + \psi \Delta_{\mathbf{z}}\!\big(e^{\mathbf{u}}\mathbf{v}\big)
   - \tfrac{1}{2}\big(2\psi_\xi + a \psi\big)\mathbf{z}\cdot\nabla_{\mathbf{z}}(e^{\mathbf{u}}\mathbf{v})
   + 2K_{\Sigma}e^{\mathbf{u}}\mathbf{v}.
$$
Its formal adjoint operator can be computed as follows:
\begin{equation}
\begin{aligned}
L_{\mathbf{u}}^*\mathbf{w}
&= \Delta_x\mathbf{w}
   + e^{\mathbf{u}}\Delta_{\mathbf{z}}(\psi\mathbf{w})
   + \nabla_{z_i}\!\Big(\tfrac{1}{2}(2\psi_\xi+a \psi)\,z_i\mathbf{w}\Big)e^{\mathbf{u}}
   + 2K_{\Sigma}e^{\mathbf{u}}\mathbf{w} \\
&= \Delta_x\mathbf{w}
   + e^{\mathbf{u}}\psi\Delta_{\mathbf{z}}\mathbf{w}
   + 2e^{\mathbf{u}}\nabla_{\mathbf{z}}\psi \cdot \nabla_{\mathbf{z}}\mathbf{w}
   + \tfrac{1}{2}e^{\mathbf{u}}(2\psi_\xi+a \psi)\,\mathbf{z}\cdot\nabla_{\mathbf{z}}\mathbf{w}  \\
&\quad + e^{\mathbf{u}}\Big(2K_{\Sigma}+\Delta_{\mathbf{z}}\psi
   +2(2\psi_\xi+a \psi)
   +\tfrac{1}{2}(\mathbf{z}\cdot\nabla_{\mathbf{z}})(2\psi_\xi+a \psi)\Big)\mathbf{w}.
\end{aligned}
\end{equation}
Using the following 
$$
-\tfrac{1}{2}\,\mathbf{z}\cdot\nabla_{\mathbf{z}}(a\psi)=(\tfrac{1}{2}-\xi)(a\psi)_\xi, \quad  \Delta_{\mathbf{z}}\psi=(\tfrac{1}{2}-\xi)\psi_{\xi\xi}-2\psi_{\xi}, \quad \nabla_{\mathbf{z}} \psi\cdot\nabla_{\mathbf{z}} \mathbf{w}=-\frac{\psi_{\xi}}{2}\mathbf{z}\cdot\nabla_{\mathbf{z}}\mathbf{w},
$$  
and the ODE for $e^{\overline{{w}}}=(\tfrac{1}{2}-\xi)\psi$  
\begin{equation}
\begin{aligned}
\big((\tfrac{1}{2}-\xi)\psi\big)_{\xi\xi}
+ a\big((\tfrac{1}{2}-\xi)\psi\big)_{\xi}
+ b(\tfrac{1}{2}-\xi)\psi=2K_{\Sigma},
\end{aligned}
\end{equation}
the formal adjoint simplifies as  
\begin{equation}
\begin{aligned}
L_{\mathbf{u}}^*\mathbf{w}
&= \Delta_x\mathbf{w}
   + e^{\mathbf{u}}\psi\Delta_{\mathbf{z}}\mathbf{w}
   + \tfrac{a\psi}{2}e^{\mathbf{u}}\mathbf{z}\cdot\nabla_{\mathbf{z}}\mathbf{w}+ e^{\mathbf{u}}\psi\big(a+(\tfrac{1}{2}-\xi)(b-a_{\xi})\big)\mathbf{w}.
\end{aligned}
\end{equation}

For any function $q(\xi)\in C^2([0,\tfrac{1}{2}])$, one can lift it to a $C^2$ function $q(\mathbf{z})$ on the closed ball $|\mathbf{z}|\leq \sqrt{2}$ by setting $\xi=\tfrac{1}{2}-\tfrac{1}{4}|\mathbf{z}|^2$. Then we introduce the conformally related operator $L_{\mathbf{u},q}^*\mathbf{w}:=e^{-q}L_{\mathbf{u}}^*(e^q\mathbf{w})$, where the choice of ${q}$ will be determined later. A computation shows
$$
L_{\mathbf u,q}^*\mathbf w
=\Delta_x\mathbf w
+e^{\mathbf u}\!\left[
\,\psi\,\Delta_{\mathbf z}\mathbf w
+\Big(\tfrac{a \psi}{2}\,\mathbf z+2\psi\,\nabla_{\mathbf z}q\Big)\!\cdot\!\nabla_{\mathbf z}\mathbf w
+\mathcal C\,\mathbf w
\right],
$$
with
$$
\begin{aligned}
\mathcal{C}
&= \psi\big(\Delta_{\mathbf z}q+|\nabla_{\mathbf z}q|^2\big)
+ \tfrac{a\psi}{2}\,\mathbf z\cdot\nabla_{\mathbf z}q
+ \big(2\psi_\xi+2a\psi\big) -\Big(\tfrac12-\xi\Big)\big(\psi_{\xi\xi}+a_\xi \psi+a\psi_\xi\big)
 \\
&= 
\psi\Big(e^{-{q}}\big((\tfrac12-\xi)e^{q}\big)_{\xi\xi}
- \big(\tfrac12-\xi\big)a{q}_\xi
+ a
+ \big(\tfrac12-\xi\big)\big(b-a_\xi\big)\Big).
\end{aligned}
$$
Denote  
$$\phi=\left(\tfrac12-\xi\right)e^q,$$  
then one can simply the expression for $\mathcal{C}$ as 
$$\mathcal{C}=\psi e^{-{q}}\big(\phi_{\xi\xi}-a\phi_{\xi}+(b-a_{\xi})\phi\big).$$  
Recall that $e^{\overline{w}}=(\frac{1}{2}-\xi)\psi$ solves the ODE  
$$(e^{\overline{w}})_{\xi\xi}+a (e^{\overline{w}})_{\xi}+b e^{\overline{w}}=2K_{\Sigma}.$$  
In the following, we take  
$${q}=\log \psi+\int_0^\xi a(\tau)\,d\tau,$$  
which belongs to $C^2([0,\tfrac{1}{2}])$, hence
$$\phi=e^{\int_0^\xi a(\tau)\,d\tau}\,e^{\overline{w}}.$$
Now,  
$$\mathcal{C}=\psi e^{-q}\cdot 2K_{\Sigma}e^{\int_0^\xi a(\tau)\,d\tau}=2K_{\Sigma}\leq 0.$$  
As a consequence, the maximum principle goes through for the operator $L_{\mathbf{u},q}^*$. Therefore, the Dirichlet boundary value problem  
$$L_{\mathbf{u},q}^*\mathbf{w}=0,\qquad \mathbf{w}|_{\partial\Omega}=0$$  
admits only the trivial solution. Since $L_{\mathbf{u},q}^*=e^{-q}L_{\mathbf{u}}^*(e^q\cdot)$, we obtain the desired conclusion.
\end{proof}

\subsection{Regularity of the Einstein metric}\label{subsec:regularity of the Einstein metric}
We have shown that the Dirichlet boundary value problem Question \ref{ques:pde fill in} has unique solution $w$. In this subsection, although our statements are formulated under the assumption $\ttg \geq 1$, this condition is not actually needed for the arguments below. We first prove that
\begin{proposition}
    Any regular conformally K\"ahler Poincar\'e-Einstein $(M,h,g)$ with $M$ diffeomorphic to a complex line bundle over a Riemann surface with $\ttg\geq1$ arises from a solution $w$ to \eqref{eq:pde fill in} with admissible $(deg,\chi,k,\ma,\mpp)$, where $w-\log(\frac{1}{2}-\xi)\in C^\infty([0,\frac{1}{2}]\times\Sigma)$. 
\end{proposition}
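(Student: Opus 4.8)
The plan is to run the K\"ahler reduction of Section~\ref{subsec:canonical conformal change} in reverse: from the smooth geometry of $(M,h,g)$ extract the profile functions $w,W$, pin down their behaviour at $\partial M$ and at the bolt, and then recover admissibility of the tuple by averaging the equation over $\Sigma$. First I would fix the set-up. With the canonical conformal change (Definition~\ref{def:canonical infinity}), the moment map $\xi$ of $\mathcal K$ is Morse--Bott with range $[0,\tfrac12]$, and since $M$ is a complex line bundle over $\Sigma_\ttg$ with $\ttg\ge1$, Corollary~\ref{cor:topology over riemann surface} tells us the only critical set is the bolt $\Xi\cong\Sigma_\ttg$ at $\xi=\tfrac12$; moreover $\mathcal K$ generates the fibre rotation of the line bundle (Section~\ref{subsec:Morse-Bott}), so the $\bS^1$-action is free on $M\setminus\Xi$. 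K\"ahler reduction on $\{0<\xi<\tfrac12\}$ writes $g$ in the reduced form \eqref{eq:g final}--\eqref{eq:d eta final}, so $w\in C^\infty((0,\tfrac12)\times\Sigma)$ solves the interior equation in \eqref{eq:pde fill in} and $W=\frac{12-6\xi w_\xi}{12+\xi^3/k^3}=|\mathcal K|_g^{-2}>0$ there; here $\Sigma$ is the K\"ahler reduction (biholomorphic to $\Xi$), $k$ is the number attached to $h$ by $k\,(2\sqrt6\max|\mathscr W_h^+|_h)^{1/3}=\tfrac12\sign(s_g)$ (or $k=\pm\infty$ when $h$ is ASD), $\mpp$ is the common fibre length of $g^\flat$, $deg$ is the degree of the line bundle $M\to\Sigma$ (equal to that of the $\bS^1$-bundle $\partial M\to\Sigma$), and $\ma$ is the area of $g^\natural=e^\varphi g_\Sigma$. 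Since $g$ restricts to $g^\flat$ on $\partial M$, the reduction yields $w|_{\xi=0}=\varphi$ and $\int_\Sigma e^\varphi\,d\mathrm{vol}_\Sigma=\ma$, so the boundary data of \eqref{eq:pde fill in} are matched.

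Next I would prove $w-\log(\tfrac12-\xi)\in C^\infty([0,\tfrac12]\times\Sigma)$. On $(0,\tfrac12)\times\Sigma$ this is immediate ($\xi$ is a submersion, $w$ is built smoothly from $g$), and near $\xi=0$ it holds because $g$ extends smoothly to $\overline M$ with $\xi$ a smooth coordinate. The real work is at the bolt. Using an $\bS^1$-equivariant tubular neighbourhood of $\Xi$ (with $\bS^1$ acting by fibre rotation, rotation number $1$ by freeness off $\Xi$), the invariant function $\xi$ becomes a smooth function of $(|\mathbf z|^2,x)$, where $\mathbf z$ is a fibre coordinate and $x$ parametrizes $\Xi$; as the normal Hessian of $\xi$ is negative definite, $\tfrac12-\xi=|\mathbf z|^2\cdot(\text{smooth}>0)$, and inverting, $|\mathbf z|^2=(\tfrac12-\xi)\cdot(\text{smooth}>0)$. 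Then $W^{-1}=|\mathcal K|_g^2$ is smooth in $(|\mathbf z|^2,x)$ and vanishes to order exactly $2$ on $\Xi$ (the linearisation of the Killing field $\mathcal K$ at the fixed surface is a non-degenerate infinitesimal rotation of the normal bundle), so $W^{-1}=(\tfrac12-\xi)\cdot(\text{smooth}>0)$; and $We^w$, the conformal factor relative to $g_\Sigma$ of the K\"ahler-reduction metrics, extends as a smooth positive family of metrics on $\Sigma$ up to $\xi=\tfrac12$, where it equals the induced bolt metric $g_\Xi$. Hence $e^w=(We^w)\,W^{-1}=(\tfrac12-\xi)\cdot(\text{smooth}>0)$, i.e.\ $w-\log(\tfrac12-\xi)$ is smooth up to the bolt, and combining the three regions gives the claim.

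Finally I would deduce admissibility by averaging. Integrating the equation over $\Sigma$ (using $\int_\Sigma\Delta_\Sigma w=0$ and Gauss--Bonnet) shows $\overline w(\xi):=\log\!\big(\tfrac1{\mathrm{Vol}_\Sigma}\int_\Sigma e^w\,d\mathrm{vol}_\Sigma\big)$ solves the decoupled ODE; by \eqref{eq:integral We^w}--\eqref{eq:integral e^w} this forces $e^{\overline w}$ to be the decoupled profile \eqref{eq:decoupled solutions w} and $\overline W$ to be \eqref{eq:decoupled solutions W} with parameters exactly $(deg,\chi,k,\ma,\mpp)$, and using \eqref{eq:W final} one computes $\overline W e^{\overline w}=\tfrac1{\mathrm{Vol}_\Sigma}\int_\Sigma We^w\,d\mathrm{vol}_\Sigma=\tfrac1{\mathrm{Vol}_\Sigma}\big((deg\cdot\mpp)\xi+\ma\big)$. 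From the previous step (averaged) I read off \ref{C3} ($e^{\overline w}>0$ on $[0,\tfrac12)$ with a simple zero at $\tfrac12$) and \ref{C2} ($\overline W e^{\overline w}>0$ on $[0,\tfrac12]$, since $We^w>0$ there), while \ref{C1} ($\mpp,\ma>0$) holds because $\mpp$ is a period and $\ma$ an area. Lastly, restricting $g$ to the smooth $2$-disk $\{x=\text{const}\}$ near $\Xi$ gives the induced metric $W\,d\xi^2+W^{-1}\big(\tfrac{\mpp}{2\pi}\big)^2d\phi^2$, whose smoothness (no cone angle) forces $(\tfrac12-\xi)W\to\tfrac{\mpp}{4\pi}$ at every point of $\Xi$, hence $\lim_{\xi\to\frac12}(\tfrac12-\xi)\overline W=\tfrac{\mpp}{4\pi}$, i.e.\ the relation $\tfrac{\mpp}{2A_0}=2\pi$ used throughout Section~\ref{sec:decoupled solutions}. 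Since the proofs of Propositions~\ref{prop:range for k T2} (for $\ttg=1$) and \ref{prop:range for k Sigma_g} (for $\ttg>1$), together with Definition~\ref{def:admissible five tuple}, show that \ref{C1}--\ref{C3} plus this no-cone-angle relation are precisely admissibility of $(deg,\chi,k,\ma,\mpp)$, the tuple is admissible; with the first two steps this exhibits $(M,h,g)$ as arising from that admissible tuple and the solution $w$ of \eqref{eq:pde fill in}.

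The step I expect to be the main obstacle is the boundary analysis at the bolt: one must organise the equivariant normal-coordinate picture carefully enough to see that the degeneracy of the K\"ahler reduction at $\xi=\tfrac12$ is exactly the mild $w=\log(\tfrac12-\xi)+C^\infty$ one (in particular that $W^{-1}$ vanishes simply while $We^w$ stays smooth and positive) and to extract the pointwise identity $(\tfrac12-\xi)W\to\tfrac{\mpp}{4\pi}$ along $\Xi$; once these are established, admissibility is essentially a re-reading of the Section~\ref{sec:decoupled solutions} computations applied to the averaged profile $\overline w$.
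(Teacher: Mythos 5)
Your overall strategy coincides with the paper's: run the K\"ahler reduction in reverse on $M\setminus\Xi$, recover the tuple $(deg,\chi,k,\ma,\mpp)$ and deduce admissibility from the averaged identities \eqref{eq:integral We^w}--\eqref{eq:integral e^w} together with the no-cone-angle relation $\tfrac{\mpp}{2A_0}=2\pi$ (this part of your argument is fine and is exactly what the paper means by ``the decoupled computation holds in the averaged sense''), and then establish the regularity $w-\log(\tfrac12-\xi)\in C^\infty$ at the bolt. The difference, and the problem, is in that last step.

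Your bolt analysis rests on the assertion that $We^w$ ``extends as a smooth positive family of metrics on $\Sigma$ up to $\xi=\tfrac12$.'' This is not something you can read off from the equivariant tubular neighbourhood the way you can for the invariant scalars $\xi$ and $|\mathcal K|_g^2$: the reduced metric is obtained by projecting onto the $g$-orthogonal complement of $\mathrm{span}(\nabla_g\xi,\mathcal K)$, and since $d\xi$ and $\theta_{\mathcal K}$ both vanish on $\Xi$ with angle-dependent leading terms, that distribution does not extend across the bolt; smoothness of the quotient metric in $\xi$ up to $\xi=\tfrac12$ therefore requires a separate argument. Worse, once you grant your asymptotics $W^{-1}=(\tfrac12-\xi)\cdot(\text{smooth}>0)$, the statement ``$We^w$ is smooth and positive up to the bolt'' is \emph{equivalent} to ``$e^w/(\tfrac12-\xi)$ is smooth and positive,'' i.e.\ to the very regularity you are trying to prove — so as written the argument is essentially circular at its crux. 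The paper avoids this by a two-stage argument: first, the Gauss curvature of each $\mathbb{C}^*$-orbit equals $-\tfrac12(W^{-1})_{\xi\xi}$ and is smooth on $M$, which together with the Morse--Bott property and the no-cone-angle condition yields $W^{-1}\sim A_0^{-1}(\tfrac12-\xi)$ with $A_0=\mpp/4\pi$ constant and gives only a \emph{Lipschitz} bound on $u=w-\log(\tfrac12-\xi)$ via \eqref{eq:W final}; second, $u$ is then a weak solution of the lifted equation \eqref{eq:lifted equation for regularity} on $\Omega\subset\mathbb{R}^4\times\Sigma$, where $\xi=\tfrac12$ is an interior locus, and interior elliptic regularity plus Lemma~\ref{lem:half regularity} upgrade Lipschitz to $C^\infty$. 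To repair your version you should either carry out the genuinely delicate computation showing the family of quotient metrics is smooth in $(|\mathbf z|^2,x)$ up to $\Xi$, or (more economically) stop your geometric argument at the Lipschitz level and finish with the elliptic bootstrap through the lifted PDE as the paper does.
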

\begin{proof}
    As we mentioned before, our calculation for decoupled solutions hold in the averaged sense for general regular conformally K\"ahler Poincar\'e-Einstein, hence we know the associated tuple $(deg,\chi,k,\ma,\mpp)$ is admissible.

    Since the fixed point set of the $\bS^1$-action induced by $\mathcal{K}$ only consists of the bolt $\Xi\simeq\Sigma$ along which the moment map $\xi=\frac{1}{2}$, away from $\Xi$, one can perform K\"ahler reduction for $g$ and write $g$ as \eqref{eq:g final}. The Einstein equation reduces to \eqref{eq:twisted toda final}-\eqref{eq:d eta final}. The solution $w$ is clearly smooth on $\Sigma\times[0,\frac{1}{2})$, but we need to investigate the behavior of $w$ near $\xi=\frac{1}{2}$. As before, fix the gauge by writing the 1-form $\eta=d\theta+\mathcal{X}dx+\mathcal{Y}dy$, with $\theta$ parameterizing the action induced by $\mathcal{K}$ and functions $\mathcal{X},\mathcal{Y}$ determined by \eqref{eq:d eta}. 
    For each point in the bolt $\Xi$, there is the orbit of the induced $\mathbb{C}^*$-action passing through it, with the metric explicitly given by $Wd\xi^2+W^{-1}d\theta^2$. It is direct to calculate that the Gauss curvature of the orbit is explicitly given by $-\frac{1}{2}(W^{-1})_{\xi\xi}$. Hence, since the Gauss curvature of each orbits is smooth as a function on $M$, we necessarily have that $(W^{-1})_{\xi\xi}$ is bounded. Recalling that $W^{-1}$ vanishes at $\xi=\tfrac{1}{2}$, the Morse–Bott property of $\xi$ further implies  
$
W^{-1} = |\mathcal{K}|_g^2=|\nabla_g\xi|^2_{g} \sim A_0^{-1}\Bigl(\tfrac{1}{2}-\xi\Bigr)
$ as $\xi\rightarrow\frac{1}{2}-$
for some positive function $A_0$ on the bolt $\Xi$. As before, admissibility ensures that there are no cone angles along $\Xi$, which implies $\tfrac{\mpp}{2A_0}=2\pi$, and hence $A_0$ is a constant. Thus
$
W - A_0\Bigl(\tfrac{1}{2}-\xi\Bigr)^{-1}
$
is a continuous function satisfying  
$$
\left|\nabla_{\Sigma}^{\,t}\!\left(W - A_0\Bigl(\tfrac{1}{2}-\xi\Bigr)^{-1}\right)\right| < C,
$$
for any $t$. Together with \eqref{eq:W final}, it is now direct to see that we at least have
        $$|\partial_\xi^s\nabla_{\Sigma}^t\bigl(w-\log(\frac{1}{2}-\xi)\bigl)|<C$$
    for $s=0,1$ and any $t$, which in particular implies that $u:=w-\log\!\left(\tfrac{1}{2}-\xi\right)$ extends to a Lipschitz function on $[0,\tfrac{1}{2}]\times\Sigma$. Using the notations of Theorem \ref{thm:higher_regularity_estimates}, the lifted function $\mathbf{u}$ is a weak $W^{1,2}$ solution of \eqref{eq:lifted equation for regularity} in $\Omega$. By interior elliptic regularity, one has $\mathbf{u}\in C^{\infty}(\overline{\Omega'})$, and consequently $u\in C^{\infty}([\tfrac{1}{4},\tfrac{1}{2}]\times\Sigma)$ by Lemma \ref{lem:half regularity}.
\end{proof}

Conversely, we suppose that $w$ is a solution with admissible $(deg,\chi,k,\ma,\mpp)$. Under the additional assumption $W>0$, which will be justified in next section, we prove
\begin{proposition}\label{prop:smooth Einstein metric}
    Given admissible $(deg,\chi,k,\ma,\mpp)$, any solution $w$ to \eqref{eq:pde fill in} with smooth boundary data $\varphi$ and $W>0$ gives rise to a regular conformally K\"ahler Poincar\'e-Einstein $(M,h,g)$ through \eqref{eq:g final}-\eqref{eq:d eta final}, with $M$ diffeomorphic to the complex line bundle over $\Sigma$ of degree $deg$. The K\"ahler metric $g$ on $M$ is the completion of \eqref{eq:g final}.
\end{proposition}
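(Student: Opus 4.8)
The plan is to reverse the K\"ahler--reduction construction: away from the bolt $\{\xi=\tfrac12\}$ one uses the ansatz \eqref{eq:g final}--\eqref{eq:d eta final} to build a K\"ahler metric on an $\mathbb{S}^1$-bundle over $[0,\tfrac12)\times\Sigma$ from the given solution $w$, checks that the bundle and connection globalize with the correct degree, and then shows the metric completes smoothly across the bolt, invoking elliptic regularity for Einstein metrics to handle the degenerate coordinate change there. With $\varphi$ smooth, Theorem~\ref{thm:higher_regularity_estimates} gives $u:=w-\log(\tfrac12-\xi)\in C^\infty([0,\tfrac12]\times\Sigma)$, so $e^{w}=(\tfrac12-\xi)e^{u}$ has a simple zero along $\xi=\tfrac12$, and substituting into \eqref{eq:W final} one finds $We^{w}=\bigl(12+\xi^{3}/k^{3}\bigr)^{-1}\,6\bigl(1-\xi-\xi(\tfrac12-\xi)u_\xi\bigr)e^{u}$, which is smooth and strictly positive on $[0,\tfrac12]\times\Sigma$; since $W>0$ on $[0,\tfrac12)$ by hypothesis, $(W,We^{w})$ gives a genuine Riemannian ansatz there, and $W\sim A_0(\tfrac12-\xi)^{-1}$ as $\xi\to\tfrac12$ with $A_0=\tfrac{3}{12+1/(8k^{3})}$ a positive constant, in particular independent of the point of $\Sigma$. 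One checks directly that admissibility is precisely the identity $\mpp=4\pi A_0$, i.e.\ the no-cone-angle relation $\tfrac{\mpp}{2A_0}=2\pi$ at the bolt. The two-form prescribed by \eqref{eq:d eta final} is closed (an integrability consequence of \eqref{eq:twisted toda final} and \eqref{eq:W final}), and a computation with the $\Sigma$-averaged equations shows that its period is $\int_\Sigma d\eta\equiv deg\cdot\mpp$: the average $\overline w$ solves the reduced ordinary differential equation, and with the boundary data $\int_\Sigma e^{w}|_{\xi=0}=\ma$ and the simple zero at $\xi=\tfrac12$ (forced by $u\in C^\infty$ and by positivity of $\int_\Sigma e^{w}\,d\mathrm{vol}_\Sigma$ on $[0,\tfrac12)$) it must coincide with the decoupled solution \eqref{eq:decoupled solutions w} attached to the admissible tuple, whence \eqref{eq:integral We^w} yields the period.

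Consequently $\tfrac{1}{\mpp}[d\eta]$ is the Euler class of a principal $\mathbb{S}^1$-bundle $P\to[0,\tfrac12)\times\Sigma$ of degree $deg$; fix $\eta$ on $P$ with $\tfrac{2\pi}{\mpp}\eta$ a connection of curvature $\tfrac{2\pi}{\mpp}d\eta$. Patching local holomorphic coordinates $x+iy$ on $\Sigma$ with the corresponding trivializations of $P$ (transition data holomorphic, respectively gauge), \eqref{eq:g final} defines a global metric $g$ on $P$, Riemannian because $W,We^{w}>0$; since the reduction of Section~\ref{subsec:Type II with Kahler} is reversible, $g$ is K\"ahler with K\"ahler form $d\xi\wedge\eta+e^{w}\omega_\Sigma$ and $h:=\xi^{-2}g$ satisfies $\mathrm{Ric}_h=-3h$. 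To complete across the bolt, set $\tau=\sqrt{\mpp/\pi}\,(\tfrac12-\xi)^{1/2}$ and $(x_1,x_2)=(\tau\cos\theta,\tau\sin\theta)$ with $\theta$ the fiber coordinate; as $\xi\to\tfrac12$ the circles of $P$ collapse and the total space fills in to the zero section of a complex line bundle $L\to\Sigma$ of degree $deg$, so the completion $M$ is diffeomorphic to the total space of $L$ and $\overline M$ to the associated closed disk bundle. Exactly as in Section~\ref{subsec:conclusion}, the relation $\mpp=4\pi A_0$ with $A_0$ constant makes the $(x_1,x_2)$-part of $h$ close up without cone angle, and the smoothness of $u$ and of $We^{w}$ up to $\xi=\tfrac12$ shows that $h$ (hence $g=\xi^{2}h$) extends to at least a Lipschitz metric on $M$; Proposition~4.15 of \cite{Li_Sun_2025} then upgrades this to a smooth structure in which $h$ is a smooth Einstein metric, so $g$ is the claimed smooth completion of \eqref{eq:g final}.

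It remains to check the defining properties. Near $\{\xi=0\}$, $g$ is smooth up to the boundary and $\xi$ is a defining function with $W\equiv1$ there by \eqref{eq:W final}, so $h=\xi^{-2}g$ is conformally compact and complete; it is Einstein on the dense open set $M\setminus\Xi$ and hence, by continuity, on all of $M$, so $(M,h)$ is Poincar\'e--Einstein. Finally $\mathcal K=J\nabla_g\xi$ is Killing for $g$, hence for $h$ (the conformal factor $\xi$ being $\mathcal K$-invariant), is tangent to $\partial M$, generates the free fiberwise $\mathbb{S}^1$-action of $P$ of orbit length $\mpp$ (since $W\equiv1$ on $\partial M$), and its $\mathbb{S}^1$-quotient of $(\partial M,g|_{\partial M})$ is $(\Sigma,e^{\varphi}g_\Sigma)$ of area $\ma$. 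Thus $(M,h,g)$ is regular conformally K\"ahler and realizes $(deg,\chi,k,\ma,\mpp)$.

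I expect the main obstacle to be the completion across the bolt: one must verify that after the degenerate change $\xi\mapsto\tau$ the metric tensor has genuinely Lipschitz (not merely continuous) coefficients and that the local $(x_1,x_2,x,y)$-charts patch consistently over $\Sigma$ --- this is where admissibility enters decisively, through the constancy of $A_0$ and the identity $\mpp=4\pi A_0$ --- after which the Einstein-metric regularity theorem does the rest. A secondary technical point is the ordinary-differential-equation comparison identifying $\overline w$ with the admissible decoupled solution, which underlies the integrality and correct value $deg$ of the bundle degree; it relies on the decoupled solution being positive on $[0,\tfrac12)$, so that the endpoints $0$ and $\tfrac12$ are disconjugate for the reduced linear equation.
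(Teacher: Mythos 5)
Your proof is correct and follows essentially the same route as the paper's, which consists only of the coordinate change $\tau=\sqrt{\mpp/\pi}\,(\tfrac12-\xi)^{1/2}$, $(x_1,x_2)=(\tau\cos\theta,\tau\sin\theta)$, the observation (via Theorem~\ref{thm:higher_regularity_estimates}) that $h$ is Lipschitz in these coordinates, and the Einstein regularity result of \cite{Li_Sun_2025}, deferring the cone-angle and bundle-construction details to Section~\ref{subsec:conclusion} and the introduction. You additionally supply two verifications the paper leaves implicit: that $A_0=3/(12+1/(8k^3))$ is a genuine constant on the bolt with $\mpp=4\pi A_0$ being exactly the admissibility relation, and that $\int_\Sigma (We^w)_\xi\,d\mathrm{vol}_\Sigma=deg\cdot\mpp$, so the prescribed curvature form actually lives on the degree-$deg$ bundle. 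On the latter point your stated reason for disconjugacy (positivity of the decoupled solution on $[0,\tfrac12)$) conflates the inhomogeneous averaged equation $F''+aF'+bF=4\pi\chi$ with its homogeneous part when $\chi\neq0$; the clean fix is to note that $(12k^3+\xi^3)y''-6\xi^2y'+12\xi y=0$ has the explicit solutions $\xi^3-6k^3$ and $\xi(\xi^3-24k^3)$, and the solution vanishing at $\xi=0$ vanishes again only at $\xi=(24k^3)^{1/3}$, which lies outside $(0,\tfrac12]$ for every admissible $k$ (this is precisely where $k\neq\tfrac{1}{\sqrt[3]{192}}$ enters), so the two-point problem is uniquely solvable and your identification $F=F_{\mathrm{dec}}$ goes through.
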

\begin{proof}
    As we did in Section \ref{subsec:conclusion}, one only needs to find a coordinate system around the bolt, in which the Einstein metric $h=\xi^{-2}g$ is Lipschitz. Consider the coordinate change $\tau:=\sqrt{\mpp/\pi}(\frac{1}{2}-\xi)^{\frac{1}{2}}$ as before, and introduce $x_1:=\tau\cos\theta,x_2:=\tau\sin\theta,x_3:=x,x_4:=y$, where $x+iy$ is a local holomorphic coordinate of $\Sigma$. From the regularity Theorem \ref{thm:higher_regularity_estimates}, it is direct to check that the Einstein metric $h$ under $(x_1,x_2,x_3,x_4)$ is Lipschitz. Hence the smoothness of the Einstein metric follows.    
\end{proof}

\subsection{\texorpdfstring{Positivity of $W$}{Positivity of W}}
\label{subsec:positivity of W}

Finally we prove for any solution $w$ to \eqref{eq:pde fill in}, $W>0$. Based on the regularity that $w-\log(\frac{1}{2}-\xi)\in C^\infty$ and $W\equiv1$ when $\xi=0$, for any solution $w$, we do know that
$$
W e^{w}
= \frac{12 e^{w} - 6\xi (e^{w})_{\xi}}{12 + \xi^{3}/k^{3}}
= \frac{e^{u}\,(12 - 6\xi (e^{\overline{w}})_{\xi}) - 6\xi e^{\overline{w}} (e^{u})_{\xi}}{12 + \xi^{3}/k^{3}}
$$
is positive in neighborhoods of $\xi=0$ and $\xi=\tfrac{1}{2}$.

\begin{lemma}
    For any solution $w$ to \eqref{eq:pde fill in} with boundary value $\varphi$, $W=\frac{12-6\xi \partial_\xi w}{12+\xi^3/k^3}$ is positive on $[0,\frac{1}{2})\times\Sigma$.
\end{lemma}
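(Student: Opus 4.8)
The statement asserts that $W>0$ throughout $[0,\tfrac12)\times\Sigma$, given only that $w$ solves the Dirichlet problem \eqref{eq:pde fill in}. Since $W\cdot e^w$ is a quantity we already know is positive near the two ends $\xi=0$ and $\xi=\tfrac12$, and $e^w>0$ on $[0,\tfrac12)$, the issue is purely whether $W$ can vanish at an interior value of $\xi$. I expect this to be attacked by a \emph{geometric continuity argument}, not by a PDE maximum principle directly on $W$, since (as the authors themselves remark) positivity of $W$ is not transparent at the PDE level. The idea is to connect any given boundary datum $\varphi$ to the decoupled datum $\overline w|_{\xi=0}$ (for which $W>0$ holds by the explicit formulas \eqref{eq:decoupled solutions W} and the admissibility computations in Section~\ref{sec:decoupled solutions}) via a path of boundary data, use Theorem~\ref{thm:existence and uniqueness pde} (equivalently Theorem~\ref{thm:TypeII_lifted_solution_moduli_space}, which gives a diffeomorphism between the moduli space of solutions and $C^{2,\alpha}$ boundary data) to get a continuous path of solutions $w_t$, and show the set of $t$ for which $W_t>0$ on $[0,\tfrac12)\times\Sigma$ is both open and closed.

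First I would set up the continuity path: fix $\varphi_0=\overline w|_{\xi=0}$, $\varphi_1=\varphi$, and take $\varphi_t=(1-t)\varphi_0+t\varphi_1$ (rescaled if necessary so that $\int_\Sigma e^{\varphi_t}\,d\mathrm{vol}_\Sigma=\mathfrak a$ — one can absorb a constant, or better, interpolate inside the affine slice of boundary data with the correct total area). By the diffeomorphism in Theorem~\ref{thm:TypeII_lifted_solution_moduli_space}, the corresponding solutions $w_t$ depend continuously (indeed smoothly) on $t$ in $C^{2,\alpha}$, hence $W_t=\tfrac{12-6\xi\partial_\xi w_t}{12+\xi^3/k^3}$ depends continuously on $t$ in $C^{1,\alpha}$. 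Let $I=\{t\in[0,1]: W_t>0 \text{ on }[0,\tfrac12)\times\Sigma\}$. Since $0\in I$, $I$ is nonempty. For openness, the key point is that $W_t>0$ is an open condition once one has uniform control near $\xi=\tfrac12$: by the regularity $w_t-\log(\tfrac12-\xi)\in C^\infty$ and $W_t\equiv 1$ at $\xi=0$, we know $W_t e^{w_t}>0$ in fixed neighborhoods of both ends (uniformly in $t$ by the compactness/uniform estimates of Theorem~\ref{thm:higher_regularity_estimates}), so positivity is really a condition on a compact subinterval $[\epsilon,\tfrac12-\epsilon]\times\Sigma$, where $C^0$-closeness of $W_t$ suffices.

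The main obstacle — and the heart of the argument — is \textbf{closedness}: ruling out that $W_{t_*}$ develops a zero at some interior point $(\xi_*,p_*)$ with $\xi_*\in(0,\tfrac12)$ as $t\uparrow t_*$. At such a first zero, $W_{t_*}(\xi_*,p_*)=0$ while $W_{t_*}\ge 0$ everywhere, so $W_{t_*}$ attains an interior minimum value $0$ there; equivalently $(We^w)_{t_*}$ vanishes at an interior point while staying $\ge 0$. The plan is to derive a contradiction from the structure of the reduced Einstein equations: differentiate the defining relation $W=\tfrac{12-6\xi w_\xi}{12+\xi^3/k^3}$ and use \eqref{eq:pde fill in} (in the form $(e^w)_{\xi\xi}+\Delta_\Sigma w - 2K_\Sigma = -\xi We^w/k^3$, i.e. the twisted Toda equation) to obtain a second-order elliptic inequality for $We^w$ — or for a suitable auxiliary quantity like $We^w/(\tfrac12-\xi)$ or $\log(We^w)$ — of the form $\mathcal{L}(We^w)\le (\text{nonnegative coefficient})\cdot We^w$ on the region where $We^w\ge 0$, to which the strong maximum principle applies and forces $We^w\equiv 0$, contradicting $We^w>0$ near the ends. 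Concretely, one computes that $We^w$ satisfies an equation coupling $\partial_\xi^2$, $\Delta_\Sigma$, and first-order terms with coefficients that are smooth on $[0,\tfrac12)$, and the zeroth-order term has a favorable sign (this is exactly where the sign of $K_\Sigma\le 0$ for $\ttg\ge 1$, and the positivity of $\xi$ and of $12+\xi^3/k^3$ — guaranteed by admissibility, which restricts $k$ away from the bad value $-1/\sqrt[3]{96}$ — enter). If a clean linear inequality is not available, the fallback is to run the same continuity/maximum-principle scheme on the lifted equation \eqref{eq:lifted equation for regularity} on $\Omega\subset\mathbb R^4\times\Sigma$, where $We^w$ lifts to a function whose zero set, were it nonempty, would be an interior minimum set, again contradicted by the strong maximum principle for the lifted linear operator. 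I would carry out the computation of the PDE satisfied by $We^w$ explicitly, identify the zeroth-order coefficient, check its sign using $K_\Sigma\le0$ and admissibility, and conclude that $I$ is closed; combined with openness and connectedness of $[0,1]$, this gives $1\in I$, i.e. $W>0$ for the original datum $\varphi$.
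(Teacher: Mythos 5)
Your overall architecture is exactly the paper's: a continuity argument on the space of boundary data $C^{2,\alpha}(\Sigma,\mathfrak{a})$, with non-emptiness supplied by the decoupled solution, openness by the solution-map diffeomorphism of Theorem \ref{thm:TypeII_lifted_solution_moduli_space}, and the uniform control of $W$ near $\xi=0$ and $\xi=\tfrac12$ reducing everything to a compact sub-cylinder. Up to that point the proposal is sound.

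The gap is in closedness, which is the heart of the lemma. You propose to derive a linear elliptic inequality $\mathcal{L}(We^{w})\le c\,(We^{w})$ directly from the twisted Toda system and invoke the strong maximum principle at a first interior zero, but you never produce this inequality, and it is doubtful that it exists in the clean form you assume: the Toda equation controls $\Delta_\Sigma w$, whereas $\Delta_\Sigma (We^{w})$ involves $\Delta_\Sigma e^{w}=e^{w}(\Delta_\Sigma w+|\nabla_\Sigma w|^2)$ and cross terms $\nabla_\Sigma w\cdot\nabla_\Sigma w_\xi$, so the zeroth-order coefficient after absorbing gradients has no evident sign, and the "fallback" on the lifted equation faces the same obstruction. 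The paper's mechanism is genuinely geometric and different: for a convergent sequence $\varphi_j\to\varphi_\infty$ with $\varphi_j\in E$, each $w_j$ already defines a smooth Poincar\'e--Einstein manifold $(M,h_j)$ by Proposition \ref{prop:smooth Einstein metric}, and the Bochner formula $\tfrac12\Delta_{h_j}|\mathcal{K}_j|^2_{h_j}=|\nabla_{h_j}\mathcal{K}_j|^2-\mathrm{Ric}_{h_j}(\mathcal{K}_j,\mathcal{K}_j)\ge 0$ (using $\mathrm{Ric}=-3h_j$) makes $|\mathcal{K}_j|^2_{h_j}=\xi^{-2}W_j^{-1}$ subharmonic. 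Applying the maximum principle on the compact region $\{\xi\ge\xi_0\}$ (which contains the bolt, where $|\mathcal{K}_j|^2_{h_j}=0$) gives the quantitative bound $W_j\ge \xi_0^2/(2\xi^2)$ on $\{\xi>\xi_0\}$, uniform in $j$, which survives the $C^{2,\alpha}$ limit and yields $W_\infty>0$ on $[0,\tfrac12)$. Without this (or some substitute that actually produces a uniform positive lower bound rather than just $W_\infty\ge 0$), your closedness step does not go through: mere nonnegativity of the limit plus a strong maximum principle for an unverified operator is not a proof. Note also that the geometric input is precisely where the hypothesis $\varphi_j\in E$ is used — one needs $W_j>0$ to know the metric is an honest Einstein metric before the Bochner argument applies — which is why the paper calls this a geometric, rather than PDE-level, continuity argument.
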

\begin{proof}
    Introduce the space $C^{2,\alpha}(\Sigma,\ma)=\{\varphi\in C^{2,\alpha}(\Sigma)\mid\int_{\Sigma}e^\varphi d\mathrm{vol}_{\Sigma}=\ma\}$ and define $E\subset C^{2,\alpha}(\Sigma,\ma)$ to be the subset of $\varphi$, where the unique solution $w$ with boundary value $\varphi$ (using the same $k$) has positive $W$. The set $E$ is non-empty as it contains the boundary value of the decoupled solution associated to $(deg,\chi,k,\ma,\mpp)$. From the existence and uniqueness of solutions to \eqref{eq:pde fill in}, Theorem \ref{thm:TypeII_lifted_solution_moduli_space}, we know $E$ is open in $C^{2,\alpha}(\Sigma,\ma)$.
    
    We now show that $E$ is closed in $C^{2,\alpha}(\Sigma,\mathfrak{a})$. Suppose $\varphi_j \to \varphi_\infty$ in $C^{2,\alpha}(\Sigma,\ma)$ and $u_j:=w_j-\overline{w} \in C^{2,\alpha}([0,\tfrac{1}{2}]\times\Sigma)$ is the solution with boundary value $\varphi_j\in E$. Passing to a subsequence, we may assume $u_{j^*} \to u_\infty$ in $C^{2,\beta}([0,\tfrac{1}{2}]\times\Sigma)$ for some $0<\beta<\alpha$. By linear elliptic regularity, we actually have $u_{j^*} \to u_\infty$ in $C^{2,\alpha}([0,\tfrac{1}{2}]\times\Sigma)$ (see the last paragraph of Step 4 of Theorem \ref{thm:TypeII_lifted_solution_moduli_space}). For the limit $w_\infty$ there is the associated $W_\infty$, which satisfies $W_\infty=1$ when $\xi=0$ and blows up to positive infinity at $\xi=\frac{1}{2}$ because $u_\infty=w_\infty-\overline{w}\in C^{2,\alpha}$. As $W_{j^*}>0$ for each $w_{j^*}$, we have $W_\infty\geq0$.
    
    Since $|u_{j^*}|_{C^{2,\alpha}([0,\frac{1}{2}]\times\Sigma)} \leq C$, there exists $\xi_1 \in (0,\tfrac{1}{2})$ such that $W_{j^*} = \frac{12 - 6\xi \partial_\xi w_{j^*}}{12+\xi^3/k^3} > \tfrac{1}{2}$ in $[0,\xi_1)$ for any $j^*$. Because of $\varphi_{j^*}\in E$ and Proposition \ref{prop:smooth Einstein metric}, it follows $w_{j^*}$ already induces smooth Poincar\'e-Einstein $(M,h_{j^*})$. Fix $\xi_0 \in (0,\xi_1)$ and consider the region $\{\xi > \xi_0\}$ in the Poincar\'e--Einstein manifold $(M,h_{j^*})$. For the Killing field $\mathcal{K}_{j^*}$, we have $|\mathcal{K}_{j^*}|_{h_{j^*}}^2=\xi^{-2}W_{j^*}^{-1}$, which is subharmonic by the Bochner formula $\frac{1}{2}\Delta_{h_{j^*}}|\mathcal{K}_{j^*}|_{h_{j^*}}^2=|\nabla_{h_{j^*}} \mathcal{K}_{j^*}|^2_{h_{j^*}}-\text{Ric}_{h_{j^*}}(\mathcal{K}_{j^*},\mathcal{K}_{j^*})$. Thus
    $$
    \sup_{\xi > \xi_0} |\mathcal{K}_{j^*}|_{h_{j^*}}^2 = \sup_{\xi = \xi_0} |\mathcal{K}_{j^*}|_{h_{j^*}}^2.
    $$
    Thus $W_{j^*} \geq \tfrac{\xi_0^2}{2\xi^2}$ for any $\xi > \xi_0$. Letting $j^* \to \infty$ gives $W_\infty \geq \tfrac{\xi_0^2}{2\xi^2}$ over $(\xi_0,\frac{1}{2})$. Since we already have $W_{j^*}>\frac{1}{2}$ over $[0,\xi_1)$, it follows $W_\infty > 0$ on $[0,\tfrac{1}{2})$. Therefore, $E$ is closed. 
\end{proof}

Finally, notice that in the Type II case, different 2-dimensional infinity $(\Sigma,g^\natural)$ and admissible $(deg,\chi,k,\ma,\mpp)$ correspond to non-isometric PEs. This is because for a Type II PE $(M,h)$, only constant scalings of $g$ are K\"ahler among all the conformal changes of $h$. Therefore, the data $(\Sigma,g^\natural)$ and $(deg,\chi,k,\ma,\mpp)$ are canonically related to $(M,h)$.

\Addresses

\end{document}